\documentclass[11pt,reqno]{amsart}
\usepackage[T1]{fontenc}
\usepackage{lmodern}
\usepackage{amsmath,amssymb,amsthm}
\usepackage[margin=1in]{geometry}
\usepackage{microtype,booktabs,enumitem,array}
\usepackage{pgfplots}
\pgfplotsset{compat=1.18}
\usepackage{hyperref}
\hypersetup{colorlinks=true,linkcolor=blue,citecolor=blue,urlcolor=blue,
pdftitle={Arithmetic Constraints and Limit Laws for Diagonal Rational Partitions},
pdfauthor={K. Srinivasa Raghava}}
\numberwithin{equation}{section}
\newtheorem{theorem}{Theorem}[section]
\newtheorem{lemma}[theorem]{Lemma}
\newtheorem{proposition}[theorem]{Proposition}
\newtheorem{corollary}[theorem]{Corollary}
\theoremstyle{definition}

\theoremstyle{remark}
\newtheorem{remark}[theorem]{Remark}
\DeclareMathOperator{\lcm}{lcm}

\allowdisplaybreaks[2]
\title[Diagonal rational partitions]{Arithmetic Constraints and Limit Laws\\for Diagonal Rational Partitions}
\makeatletter
\let\originalsettitle\@settitle
\def\@settitle{\originalsettitle
  \vspace{0.6em}\begin{center}\normalfont\itshape
  To the lasting memory of Hardy--Ramanujan, in honour of their seminal work on integer partitions.
  \end{center}}
\makeatother
\author{K. Srinivasa Raghava}
\address{Pie Mathematics Association}
\email{srinivasaraghavak@gmail.com}
\subjclass[2020]{Primary 11P82; Secondary 05A17, 60F05}
\keywords{Rational partitions, Ehrhart quasipolynomials, arithmetic constraints,
prime blocks, limit laws}
\date{}

\begin{document}
\begin{abstract}
Let $R_N(m)$ count unordered partitions of $m$ into reduced positive
fractions whose numerators and denominators are at most $N$, excluding
integer parts. Uniformly for $\rho$ in compact positive intervals,
$\log R_N(\lfloor\rho N\rfloor)=\sqrt{2\rho}\,N^{3/2}
-\kappa(\rho)N^{3/2}/\log N+o(N^{3/2}/\log N)$.
The positive, continuously differentiable function $\kappa$ is an explicit
sum of lattice-distance integrals, with $\kappa(1)\approx0.00264713$.
For a uniform partition of $n$, all but $o_{\mathbb P}(n/\log n)$
prime-denominator blocks in $(n/2,n]$ have total $2$ or $3$, according
to whether $p/n$ lies below or above an explicit threshold near $0.7522$.
For fixed $N$, we give the Ehrhart numerator and determine how its
poles control quasipolynomial coefficients. Its residue distribution
is asymmetric for $N\ge4$, but agrees with an independent model in
every moment below order $\lceil N/2\rceil$. We identify the first
discrepancy and prove a Gaussian limit. We also obtain joint
denominator and size laws under two sampling rules.
\end{abstract}
\maketitle

\section{Introduction}

For an integer $N\ge1$, let
\begin{equation}\label{eq:alphabet}
C_N=\left\{\frac ab:1\le a\le N,\quad 2\le b\le N,
\quad (a,b)=1\right\},
\end{equation}
and, for $m\in\mathbb Z_{\ge0}$, define
\begin{equation}\label{eq:countdefinition}
R_N(m)=\#\left\{(x_\alpha)_{\alpha\in C_N}:
x_\alpha\in\mathbb Z_{\ge0},\quad
\sum_{\alpha\in C_N}\alpha x_\alpha=m\right\}.
\end{equation}
Fractions are reduced, repetitions are allowed, and order is ignored.
Every part is at least $1/N$, so the count is finite. The diagonal
sequence $R(n)=R_n(n)$ begins $0,1,16,226,8047,223310,\ldots$ for
$n\ge1$. We set $R_N(0)=1$ and $L_N=\lcm(1,\ldots,N)$, with $L_1=1$.

For a fixed alphabet this is a restricted partition problem. On the
diagonal, both the alphabet and its common denominator grow. Small
fractions determine the leading exponential growth; the requirement
that their total be an integer imposes a further cost. We prove
\begin{equation}\label{eq:introasym}
\log R_N(\lfloor\rho N\rfloor)
=\sqrt{2\rho}\,N^{3/2}
-\kappa(\rho)\frac{N^{3/2}}{\log N}
+o\!\left(\frac{N^{3/2}}{\log N}\right),
\end{equation}
uniformly on compact positive $\rho$-intervals. Theorem~\ref{asym:main}
gives the convergent formula for $\kappa$.

The correction has a simple source. For a prime $p\in(N/2,N]$,
integrality forces the denominator-$p$ block to have integer total.
In the independent geometric model at $\beta\asymp\sqrt N$, its total
multiplied by $p$ has mean of order $N$ and standard deviation of order
$N^{3/4}$, whereas its allowed lattice has spacing $p\asymp N$.
The resulting exponential cost has scale $\sqrt N$ per block, and
there are order $N/\log N$ such primes. The proof treats finitely
many prime ranges with disjoint blocks and then corrects the remaining
residues by an explicit injection. The cutoff is removed after the
limit in $N$.

At $\beta=\sqrt{N/(2\rho)}$, the probability of an integral total in
this model is
$\exp\{-\kappa(\rho)N^{3/2}/\log N+o(N^{3/2}/\log N)\}$.
It is therefore much smaller than
$1/L_N=\exp\{-N+o(N)\}$, the value suggested by a uniform fractional
part. The same block cost also describes uniform partitions on the
diagonal. Put
\[
t_* =\frac{3(\sqrt2+\sqrt3)^2}{4\pi^2}\approx0.7522.
\]
All but $o_{\mathbb P}(n/\log n)$ primes in $(n/2,t_*n)$ have block
total $2$; in $(t_*n,n]$, all but that many have total $3$.
Here $Y_n=o_{\mathbb P}(a_n)$ means $Y_n/a_n\to0$ in probability.
Other target densities give different preferred weights and transitions.

For $0<x\le1$, M\"obius inversion gives
$\#\{\alpha\in C_N:\alpha\le x\}=x(\Phi_N-1)+O(N\log N)$,
where $\Phi_N=\sum_{j\le N}\varphi(j)$. At $x\asymp N^{-1/2}$
the main term dominates, giving density $D\sim3N^2/\pi^2$.
The classical partition exponent $\pi\sqrt{2Dm/3}$ then becomes
$\sqrt{2\rho}\,N^{3/2}$ at $m=\rho N$
\cite{andrews,hardyramanujan,erdos1942,meinardus,rothszekeres}.
The dependence of the alphabet on $N$ calls for the uniform estimates
and arithmetic correction proved here. Other fractional partition
models include the constant-denominator problem of Hoelscher and
Palsson \cite{hoelscherpalsson} and Egyptian fraction decompositions
\cite{bloomelsholtz}.

Section~\ref{sec:exact-structure} develops the fixed-alphabet formulas
within Ehrhart theory and denumerant theory
\cite{ehrhart,bell,beckrobins,becksanyal,mcmullen,becksamwoods}.
Beyond classical quasipolynomiality, the residue numerator has a sharp
independence threshold: its moments agree with independent residues
through order $\lceil N/2\rceil-1$, despite its asymmetry. A finite
Fourier projection identifies the first discrepancy, and moments
give its Gaussian limit. This method complements the analytic
coefficient-limit methods in \cite{bender,hwang}.

Section~\ref{asym:section} proves \eqref{eq:introasym}.
For random partitions, independent multiplicities followed by
conditioning are standard \cite{fristedt,bogachev,yakubovich}.
Section~\ref{sec:random} gives joint denominator and size laws under
mass sampling and sampling among distinct occupied fractions.
The denominator has density $2t$ on $(0,1)$ and is asymptotically
independent of size. The size marginals occur, after rescaling, in
Mutafchiev's ordinary-partition results
\cite[Theorems 2--3]{mutafchiev}; the joint denominator law and its
transfer through arithmetic conditioning are specific to this model.
Section~\ref{sec:prime} proves the prime-block concentration results.

All logarithms are natural. Limits are taken as $N$ or $n$ tends to
infinity unless specified otherwise; subscripts on $O$ and $o$ list
allowed parameter dependence. The notation $f\asymp g$ means that
$f/g$ is bounded above and below by positive constants. We use
$\varphi$ for Euler's function, $\mu_{\mathrm M}$ for the M\"obius
function, and $\pi_{\mathrm{pr}}(x)$ for the number of primes at most
$x$; $\pi=3.14159\ldots$ is the circular constant. We set
$\Phi_k=\sum_{j\le k}\varphi(j)$, with $\varphi(1)=1$.
Arithmetic and prime estimates follow
\cite{apostol,montgomeryvaughan,tenenbaum}.

\section{Fixed alphabets}\label{sec:exact-structure}

Put $d_N=|C_N|$, $c_N(a)=\#\{2\le b\le N:(a,b)=1\}$,
$U_N=\sum_{a/b\in C_N}a$, and $\tau_N=\sum_{\alpha\in C_N}\alpha$.
Reciprocal pairs with numerator and denominator at least two give
\begin{equation}\label{eq:productparts}
 \prod_{\alpha\in C_N}\alpha=\frac1{N!}.
\end{equation}
We set $A_1(z)=1$ for the empty alphabet and assume $N\ge2$ below.
We will use the following elementary M\"obius calculation, for each
fixed integer $j\ge0$:
\begin{equation}\label{eq:primitive-power-sum}
 \sum_{\substack{1\le a,b\le N\\(a,b)=1}}a^j
 =\sum_{v\le N}\mu_{\mathrm M}(v)v^j
       \left\lfloor\frac Nv\right\rfloor\sum_{u\le N/v}u^j
 =\frac{N^{j+2}}{(j+1)\zeta(2)}+O_j(N^{j+1}\log N).
\end{equation}
The power-sum errors total $O_j(N^{j+1}\sum_{v\le N}v^{-1})$,
and the tail of $\sum_v\mu_{\mathrm M}(v)/v^2$ costs $O_j(N^{j+1})$.
In particular,
\begin{equation}\label{eq:coprime-square}
 \sum_{\substack{1\le a,b\le N\\(a,b)=1}}1
 =\sum_{v\le N}\mu_{\mathrm M}(v)\left\lfloor\frac Nv\right\rfloor^2
 =\frac{6N^2}{\pi^2}+O(N\log N),\qquad
 d_N=\frac{6N^2}{\pi^2}+O(N\log N).
\end{equation}
The last identity removes the $N$ pairs with $b=1$.

\subsection{Enumeration and poles}
The fixed-alphabet problem is the Ehrhart counting problem
$R_N(m)=\#(m\mathcal P_N\cap\mathbb Z^{d_N})$ for
\[
 \mathcal P_N=\operatorname{conv}\{(b/a)e_{a/b}:a/b\in C_N\}
 =\{x\ge0:\textstyle\sum_\alpha\alpha x_\alpha=1\}.
\]
This rational simplex has dimension $d_N-1$ and denominator dividing
$L_N$, with equality for $N\ge3$ because every numerator $1,\ldots,N$
occurs. See \cite{ehrhart,beckrobins,becksanyal} for rational Ehrhart theory.

\begin{theorem}\label{thm:residue-numerator}
The polynomial
\begin{equation}\label{eq:residue-numerator}
 A_N(z)=\sum_{\substack{0\le r_{a/b}<b\ (a/b\in C_N)\\
                   \sum_{a/b\in C_N}(a/b)r_{a/b}\in\mathbb Z}}
           z^{\sum_{a/b\in C_N}(a/b)r_{a/b}}
\end{equation}
has nonnegative integer coefficients, $A_N(0)=1$, and satisfies
\begin{align}
 H_N(z):=\sum_{m\ge0}R_N(m)z^m
   &=\frac{A_N(z)}{\prod_{a=1}^N(1-z^a)^{c_N(a)}},
                         \label{eq:fixed-hilbert-series}\\
 A_N(1)&=\frac1{L_N}\prod_{a/b\in C_N}b,\qquad
 \deg A_N\le\lfloor U_N-\tau_N\rfloor<U_N.
                         \label{eq:residue-numerator-size}
\end{align}
\end{theorem}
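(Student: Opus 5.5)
The plan is to decompose each multiplicity by Euclidean division according to the denominator of its part. For $\alpha=a/b\in C_N$, write $x_\alpha=bq_\alpha+r_\alpha$ with $q_\alpha\ge0$ and $0\le r_\alpha<b$. Then
\[
\alpha x_\alpha=aq_\alpha+(a/b)r_\alpha .
\]
The map $x\mapsto(q,r)$ is a bijection from $\mathbb Z_{\ge0}^{C_N}$ onto $\mathbb Z_{\ge0}^{C_N}\times\prod_\alpha\{0,\ldots,b-1\}$. The total $\sum_\alpha\alpha x_\alpha$ is an integer exactly when $s(r):=\sum_\alpha(a/b)r_\alpha$ is an integer, because $\sum aq_\alpha$ is always an integer. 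Summing $z^{\sum\alpha x_\alpha}$ over all $x$ with integral total therefore factors as
\[
\Bigl(\sum_{r:\,s(r)\in\mathbb Z}z^{s(r)}\Bigr)\prod_\alpha\frac1{1-z^{a}} .
\]
Grouping the factors by numerator $a$ gives exactly $\prod_{a}(1-z^a)^{c_N(a)}$, since $c_N(a)$ counts the admissible denominators for that numerator. The left side is $H_N(z)$ by \eqref{eq:countdefinition}, which proves \eqref{eq:fixed-hilbert-series}. It also shows that $A_N$ is a polynomial with nonnegative integer coefficients, being a finite sum of monomials $z^{s(r)}$ with $s(r)\in\mathbb Z_{\ge0}$. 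Since $s(r)=0$ forces $r=0$ (all $a\ge1$), we get $A_N(0)=1$.

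For $A_N(1)$, count the residue vectors $r$ in the box $B=\prod_\alpha\{0,\ldots,b-1\}$ with $s(r)\in\mathbb Z$. Reduce modulo $1$: the map $r\mapsto s(r)\bmod 1$ factors through the group $G=\prod_\alpha\mathbb Z/b\mathbb Z$, because shifting $r_\alpha$ by $b$ changes $s$ by the integer $a$. It is therefore a group homomorphism $\psi:G\to\mathbb Q/\mathbb Z$, and $B$ is a complete set of representatives for $G$. Hence $A_N(1)=|\ker\psi|=|G|/|\operatorname{im}\psi|$. The image is the subgroup generated by the classes $a/b$. Because $(a,b)=1$, the class $a/b$ has order exactly $b$ and generates $\frac1b\mathbb Z/\mathbb Z$. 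The image is thus $\frac1{L}\mathbb Z/\mathbb Z$ with $L=\lcm$ of the denominators occurring, namely $\{2,\ldots,N\}$, so $L=L_N$ for $N\ge2$. This gives $A_N(1)=\prod b/L_N$. I expect this step to be the only one needing real care, specifically identifying the image via coprimality.

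For the degree bound, observe that $s(r)\le\sum_\alpha(a/b)(b-1)=U_N-\tau_N$, and that $s(r)$ is an integer whenever its monomial appears in $A_N$. It follows that $\deg A_N\le\lfloor U_N-\tau_N\rfloor$. Finally, $\tau_N>0$ because $C_N\neq\emptyset$ for $N\ge2$, so $\lfloor U_N-\tau_N\rfloor\le U_N-\tau_N<U_N$.
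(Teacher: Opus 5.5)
Your proposal is correct and follows the paper's proof essentially step for step. The Euclidean decomposition $x_{a/b}=bq_{a/b}+r_{a/b}$ gives the factorization; the residue homomorphism $\prod_{a/b}\mathbb Z/b\mathbb Z\to(L_N^{-1}\mathbb Z)/\mathbb Z$, with its equal fibers, gives $A_N(1)$; and the bound $\sum(a/b)(b-1)=U_N-\tau_N$ gives the degree. The only cosmetic difference is that you identify the image through coprimality, each class $a/b$ generating $\frac1b\mathbb Z/\mathbb Z$. The paper simply notes that the parts $1/b$, $2\le b\le N$, already generate the target.
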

\begin{proof}
The unique decomposition $x_{a/b}=bq_{a/b}+r_{a/b}$, with
$q_{a/b}\ge0$ and $0\le r_{a/b}<b$, proves
\eqref{eq:fixed-hilbert-series}. Equivalently, the cone over
$\mathcal P_N\times\{1\}$ has primitive integral rays
$(b e_{a/b},a)$; an integral point of their half-open fundamental
parallelepiped has ray coefficients $r_{a/b}/b$ and height
$\sum(a/b)r_{a/b}$. Thus $A_N$ enumerates its integral points by height.
The residue homomorphism
\[
 \prod_{a/b\in C_N}\mathbb Z/b\mathbb Z
 \longrightarrow (L_N^{-1}\mathbb Z)/\mathbb Z,\qquad
 r\longmapsto\sum(a/b)r_{a/b}\pmod1
\]
is onto because the fractions $1/b$ generate the target. Its equal
fibers give $A_N(1)$, and every residual weight is at most
$\sum(a/b)(b-1)=U_N-\tau_N$.
\end{proof}

\begin{proposition}\label{prop:root-unity-filter}
As formal series in $z^{1/L_N}$,
\begin{align}
 H_N(z)&=\frac1{L_N}\sum_{j=0}^{L_N-1}
       \prod_{\alpha\in C_N}(1-e^{2\pi i j\alpha}z^\alpha)^{-1},
                                           \label{eq:root-filter-H}\\
 A_N(z)&=\frac1{L_N}\sum_{j=0}^{L_N-1}
       \prod_{a/b\in C_N}\frac{1-z^a}{1-e^{2\pi i ja/b}z^{a/b}}.
                                           \label{eq:root-filter-A}
\end{align}
Each quotient in the second identity is a finite geometric sum.
\end{proposition}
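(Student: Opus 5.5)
The plan is to apply a roots-of-unity filter in the ring of formal power series in $w=z^{1/L_N}$. First I will write the unrestricted generating function for all multiplicity vectors, graded by the rational weight $\sum_\alpha\alpha x_\alpha$:
\[
 F_N(z)=\prod_{\alpha\in C_N}(1-z^\alpha)^{-1}=\sum_{x}z^{\sum\alpha x_\alpha}.
\]
Every exponent lies in $L_N^{-1}\mathbb Z_{\ge0}$, so $F_N$ is a power series in $w$. Substituting $z^\alpha\mapsto e^{2\pi i j\alpha}z^\alpha$ amounts to the ring automorphism $w\mapsto\zeta^j w$, with $\zeta=e^{2\pi i/L_N}$. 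This is legitimate because $L_N\alpha\in\mathbb Z$, so $e^{2\pi i j\alpha}=\zeta^{jL_N\alpha}$. The substitution multiplies the monomial $z^{s}$ by $e^{2\pi i j s}$.

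Averaging over $j=0,\ldots,L_N-1$ then kills every monomial whose exponent $s\in L_N^{-1}\mathbb Z$ is not an integer, by the orthogonality $\frac1{L_N}\sum_j e^{2\pi i js}=[s\in\mathbb Z]$. It keeps the integer-exponent monomials with coefficient $1$. The surviving series is exactly $\sum_m R_N(m)z^m=H_N(z)$ by \eqref{eq:countdefinition}, which proves \eqref{eq:root-filter-H}. Each factor $(1-e^{2\pi i j\alpha}z^\alpha)^{-1}$ is the image of $(1-z^\alpha)^{-1}$ under the automorphism, so the product identity is automatic.

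For \eqref{eq:root-filter-A} I will run the same filter on the finite generating function of residue vectors,
\[
 B_N(z)=\prod_{a/b\in C_N}\sum_{r=0}^{b-1}z^{(a/b)r}
 =\prod_{a/b\in C_N}\frac{1-z^a}{1-z^{a/b}}.
\]
By \eqref{eq:residue-numerator}, $A_N$ is precisely the integer-exponent part of $B_N$, and averaging the twisted copies extracts it as before. Under the twist $z^{a/b}\mapsto e^{2\pi i ja/b}z^{a/b}$ the numerator $1-z^a$ is unchanged, since $e^{2\pi i ja}=1$. The $r$-th term of the geometric sum becomes $e^{2\pi i jar/b}z^{ar/b}$. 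So the twisted factor is the finite sum $\sum_{r<b}(e^{2\pi i ja/b}z^{a/b})^r$, which equals $(1-z^a)/(1-e^{2\pi i ja/b}z^{a/b})$ because $(e^{2\pi i ja/b}z^{a/b})^b=z^a$. This is the closing remark of the statement.

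As an alternative route, I could divide \eqref{eq:root-filter-H} by the denominator of \eqref{eq:fixed-hilbert-series}. That denominator is invariant under every twist because it involves only integer powers of $z$, so it passes inside the average. Using $\prod_{a/b}(1-z^a)=\prod_a(1-z^a)^{c_N(a)}$ gives the second formula directly.

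The only delicate point is bookkeeping rather than analysis. I need to check that all manipulations happen in $\mathbb C[[w]]$ with invertible constant terms, so that each geometric inverse exists and the automorphism commutes with the products. I also need to confirm that $e^{2\pi i j\alpha}$ depends only on $jL_N\alpha \bmod L_N$, so the twists form a group action of $\mathbb Z/L_N$. Both follow from $L_N\alpha\in\mathbb Z$ for every $\alpha\in C_N$.
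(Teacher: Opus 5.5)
Your proof is correct and is the same roots-of-unity filter the paper uses: averaging the twisted products over $j$ keeps exactly the integer exponents, applied once to unrestricted multiplicities to give $H_N$ and once to the residue products over $0\le r_{a/b}<b$ to give $A_N$. One small slip: in your alternative route you mean to multiply, not divide, \eqref{eq:root-filter-H} by $\prod_a(1-z^a)^{c_N(a)}$, and that route is not needed anyway.
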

\begin{proof}
For $w\in L_N^{-1}\mathbb Z$, the average
$L_N^{-1}\sum_{j=0}^{L_N-1}e^{2\pi i jw}$ is the indicator of
$w\in\mathbb Z$. Apply it to unrestricted multiplicities and bounded residues.
\end{proof}
The $j=0$ product in \eqref{eq:root-filter-H} is the unconstrained
product of Section~\ref{asym:section}; the remaining characters impose
integrality, as in Fourier--Dedekind formulas \cite[Chapter~8]{beckrobins}.

\begin{theorem}\label{thm:finite-coefficient-formula}
The function $R_N(m)$, $m\ge0$, is a quasipolynomial of degree $d_N-1$
with period dividing $L_N$ and constant leading coefficient
\begin{equation}\label{eq:quasipolynomial-leading}
 \frac1{L_N(d_N-1)!\prod_{\alpha\in C_N}\alpha}
       =\frac{N!}{L_N(d_N-1)!}.
\end{equation}
\end{theorem}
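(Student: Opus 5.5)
We work from the rational generating function \eqref{eq:fixed-hilbert-series} of Theorem~\ref{thm:residue-numerator}. The denominator $\prod_{a=1}^N(1-z^a)^{c_N(a)}$ has all its poles at roots of unity of order dividing some $a\le N$, hence dividing $L_N$. The numerator $A_N$ is a polynomial with $\deg A_N<U_N$. The total degree of the denominator is $\sum_a a\,c_N(a)=U_N$, so $H_N$ is a proper rational function. Its coefficient sequence is therefore exactly a quasipolynomial for all $m\ge0$, with no finite exceptional set. We obtain this by partial fractions: a pole of order $k$ at a root of unity $\zeta$ contributes $\zeta^{-m}$ times a polynomial in $m$ of degree $k-1$. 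The period divides the lcm of the orders of the poles that occur, hence divides $L_N$. Equivalently, one may quote rational Ehrhart theory for $\mathcal P_N$, whose denominator is $L_N$ (or divides it), and which has dimension $d_N-1$.

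For the degree and the leading coefficient, we look at the pole at $z=1$. Its order is at most $\sum_a c_N(a)=d_N$. Every other root of unity $\zeta\ne1$ is a pole of order $\#\{\alpha=a/b\in C_N:\operatorname{ord}\zeta\mid a\}<d_N$, because $a=1$ occurs with $c_N(1)=N-1\ge1$ and $\operatorname{ord}\zeta\nmid 1$. Those poles therefore only affect coefficients of degree at most $d_N-2$, so the $m^{d_N-1}$ coefficient is constant in the residue class of $m$. Near $z=1$ we have $1-z^a\sim a(1-z)$, so
\[
H_N(z)\sim \frac{A_N(1)}{\prod_{a}a^{c_N(a)}}(1-z)^{-d_N}.
\]
This gives leading coefficient $A_N(1)/\bigl((d_N-1)!\prod_{a/b\in C_N}a\bigr)$, which is positive, so the degree is exactly $d_N-1$.

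Next we substitute $A_N(1)=L_N^{-1}\prod_{a/b}b$ from \eqref{eq:residue-numerator-size}. This turns the coefficient into
\[
\frac{1}{L_N(d_N-1)!}\prod_{\alpha\in C_N}\alpha^{-1},
\]
and \eqref{eq:productparts} converts $\prod_\alpha\alpha$ into $1/N!$, which yields \eqref{eq:quasipolynomial-leading}.

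The only delicate point is the strict inequality of pole orders at $\zeta\ne1$, which is what guarantees a constant (non-periodic) leading coefficient. This is handled by the numerator-one parts $1/b$, which exist once $N\ge2$. We also need $A_N(1)\ne0$ so that the pole at $1$ has full order $d_N$; this is immediate from nonnegativity and $A_N(0)=1$. Finally, we check that no cancellation between $A_N$ and the denominator can lower the degree: this is automatic, since the order at $1$ is computed directly from $A_N(1)>0$.
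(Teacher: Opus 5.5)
Your proof is correct and is essentially the paper's argument: the principal part of \eqref{eq:fixed-hilbert-series} at $z=1$, evaluated with $A_N(1)=L_N^{-1}\prod b$ and \eqref{eq:productparts}, gives the leading coefficient, and the $N-1$ numerator-one factors bound every pole at $\zeta\ne1$ by order $d_N-(N-1)$. The differences are minor. You derive quasipolynomiality and the period directly from partial fractions of the proper rational function, using $\deg A_N<U_N$, where the paper quotes Ehrhart's theorem. Also, the count you give at $\zeta\ne1$ is the vanishing order of the denominator, so it is an upper bound for the pole order rather than its exact value, but an upper bound is all your argument needs.
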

\begin{proof}
Ehrhart's theorem for $\mathcal P_N$ gives the degree and period.
At $z=1$, \eqref{eq:fixed-hilbert-series} has leading principal part
$(N!/L_N)(1-z)^{-d_N}$; every other pole has order at most
$d_N-(N-1)$, since the $N-1$ numerator-one factors do not vanish there.
Coefficient extraction gives \eqref{eq:quasipolynomial-leading}.
\end{proof}
The rational generating function also places fixed-alphabet congruences
within the periodicity theory of restricted partitions
\cite{kwong,nijenhuiswilf}.

\begin{theorem}\label{thm:constant-coefficients}
Put $M_N(q)=\#\{a/b\in C_N:q\mid a\}$ for integers $q\ge2$. For
$\zeta^{L_N}=1$, $\zeta\ne1$, define
\[
 s_N(\zeta)=\left(\sum_{\substack{1\le a\le N\\\zeta^a=1}}c_N(a)
                       -\operatorname{ord}_\zeta A_N\right)_+,
 \qquad s_N=\max_{\substack{\zeta^{L_N}=1\\\zeta\ne1}}s_N(\zeta),
 \qquad x_+=\max(x,0).
\]
The coefficients of $m^j$ in $R_N(m)$ are constant for
$s_N\le j\le d_N-1$. If $s_N>0$, the coefficient of $m^{s_N-1}$
is nonconstant; if $s_N=0$, $R_N$ is a polynomial. Moreover,
\begin{equation}\label{eq:constant-coefficient-bound}
 s_N\le\max_{q\ge2}M_N(q)=M_N(2)
       =\frac{2N^2}{\pi^2}+O(N\log N).
\end{equation}
The maximum is attained only at $q=2$ for $N\ge5$, and exactly at
$q=2,3$ for $N=3,4$. Thus at least $(2/3+o(1))d_N$ leading
coefficients are constant as $N\to\infty$.
\end{theorem}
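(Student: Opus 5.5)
Our plan is to read off the quasipolynomial structure from the partial-fraction expansion of $H_N$ in \eqref{eq:fixed-hilbert-series}. Each pole of $H_N$ is a root of unity $\zeta$ with $\zeta^{L_N}=1$. At such a $\zeta$ the denominator $\prod_a(1-z^a)^{c_N(a)}$ vanishes to order $\sum_{\zeta^a=1}c_N(a)$, since each factor $1-z^a$ has a simple zero at every $a$-th root of unity. After cancelling against the numerator $A_N$, the pole order of $H_N$ at $\zeta$ is exactly $s_N(\zeta)$.

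The principal part at $\zeta$ is a combination of terms $(1-\bar\zeta z)^{-k}$ for $1\le k\le s_N(\zeta)$. Expanding these contributes $\zeta^m P_\zeta(m)$ to $R_N(m)$, where $P_\zeta$ is a polynomial of degree exactly $s_N(\zeta)-1$ whose leading coefficient is a nonzero multiple of the top Laurent coefficient. The pole at $z=1$ contributes a genuine polynomial, and the polynomial part of $H_N$ (if $\deg A_N$ exceeds the denominator degree) affects only finitely many $m$. We should therefore state the result for $m$ beyond $\deg A_N$, or note that Ehrhart reciprocity and the degree bound in \eqref{eq:residue-numerator-size} rule this out. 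I expect the latter: $\deg A_N<U_N\le\sum_a a\,c_N(a)$ already holds, since $U_N-\tau_N<U_N=\sum a c_N(a)$.

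With this decomposition the coefficient of $m^j$ is
\[
\text{const}+\sum_{\zeta\ne1}c_{\zeta,j}\zeta^m,
\]
and $c_{\zeta,j}=0$ whenever $j\ge s_N(\zeta)$. This gives constancy for $j\ge s_N$. For $j=s_N-1$, the function $m\mapsto\sum_{\zeta:\,s_N(\zeta)=s_N}c_\zeta\zeta^m$ has at least one nonzero $c_\zeta$ with $\zeta\ne1$. Since the characters $m\mapsto\zeta^m$ are linearly independent over $\mathbb Z/L_N$, this function is nonconstant. If $s_N=0$, no pole other than $z=1$ survives, so $R_N$ is a polynomial.

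For the bound \eqref{eq:constant-coefficient-bound}, we drop $\operatorname{ord}_\zeta A_N\ge0$ and let $q>1$ be the order of $\zeta$. Then $s_N(\zeta)\le\sum_{q\mid a}c_N(a)=M_N(q)$. Next we compare $M_N(q)$ across $q$. Roughly, $M_N(q)$ counts coprime pairs with $q\mid a$, which is about $6N^2/(\pi^2 q)\cdot\prod_{p\mid q}(1-p^{-2})^{-1}\cdot(\ldots)$. More precisely, by a M\"obius computation in the spirit of \eqref{eq:primitive-power-sum}, restricted to $a=qa'$ with $(a',b)=1$ and $q$ coprime to $b$, the density for prime $q=p$ is $\frac{6}{\pi^2}\cdot\frac{1}{p}\cdot\frac{p}{p+1}=\frac{6}{\pi^2(p+1)}$. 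This equals $2/\pi^2$ at $p=2$ and is smaller for all other $q$; composite $q$ satisfy $M_N(q)\le M_N(p)$ for any prime $p\mid q$. Removing $b=1$ costs $O(N)$, so $M_N(2)=2N^2/\pi^2+O(N\log N)$.

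The main obstacle is making the comparison $M_N(2)>M_N(q)$ hold for every $N\ge5$, not just asymptotically. The asymptotic gap $2/\pi^2-6/(4\pi^2)$ between $q=2$ and $q=3$ dominates the $O(N\log N)$ error only for large $N$. For the remaining small $N$ we would compute $M_N(2)$ and $M_N(3)$ directly (for $N=3,4$ they coincide: both equal $1$ at $N=3$ and $2$ at $N=4$). To get a cleaner argument, we would try a direct injection from fractions $a/b$ with $p\mid a$, $p\ge3$, into fractions with even numerator, for example via $a\mapsto$ a nearby even multiple, and check it by cases. The final count $(2/3+o(1))d_N$ follows from $d_N-M_N(2)\sim(6-2)N^2/\pi^2$ constant coefficients out of $d_N\sim6N^2/\pi^2$.
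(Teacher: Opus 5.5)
Your partial-fraction argument matches the paper's: pole orders $s_N(\zeta)$, constancy above degree $s_N-1$, nonconstancy from independence of characters, and $s_N(\zeta)\le M_N(q)$ for $\zeta$ of order $q$. Your observation that $\deg A_N<U_N=\sum_a a\,c_N(a)$ makes $H_N$ proper is a correct detail the paper leaves implicit.

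The gap is in comparing the $M_N(q)$. The statement asserts $\max_{q\ge2}M_N(q)=M_N(2)$ exactly for every $N$, with $q=2$ the unique maximizer for all $N\ge5$. Your density $6/(\pi^2(p+1))$ compares $q=2$ with a fixed prime only for $N$ beyond a threshold you never make explicit. You do not compute the $O(N\log N)$ constants, so the finite range left to "compute directly" is undetermined. The density argument also does not cover primes $p$ growing with $N$, or strictness for $q=2^j$ with $j\ge2$. Your fallback injection, replacing $a$ by a nearby even number while keeping $b$, fails outright for even $b$: no reduced fraction with even denominator has an even numerator.

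The missing idea is to flip those fractions. For odd $q\ge3$ the paper uses $a/b\mapsto(2a/q)/b$ if $b$ is odd, and $a/b\mapsto b/a$ if $b$ is even.

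\begin{itemize}
\item Both images lie in $C_N$ with even numerator. In the first branch, $2\le 2a/q\le 2N/3$ is coprime to odd $b$. In the second branch, $b$ is even and $3\le q\le a\le N$.
\item Each branch is injective. Their images are disjoint, since the first branch has denominators prime to $q$ and the second has denominators divisible by $q$.
\item For even $q$ the source set is already a subset of the target.
\end{itemize}

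Strictness then follows from explicit witnesses. For even $q\ge4$, the fraction $2/3$ lies in the target but not the source. For odd $q$ and $N\ge7$, take $m\in\{N-1,N-2\}$ even. One of $m/(m\pm1)$ has denominator prime to $q$, and its numerator exceeds $2N/3$, so it lies in neither branch. Only $N\le6$ remains, which is a finite table. With this injection in place, your asymptotic for $M_N(2)$ and the count $(2/3+o(1))d_N$ go through.
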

\begin{proof}
The actual pole order at $\zeta\ne1$ is $s_N(\zeta)$. Partial fractions
give $R_N(m)=\sum_{\zeta^{L_N}=1}\zeta^{-m}Q_{N,\zeta}(m)$, where
$\deg Q_{N,1}=d_N-1$ and each nonzero $Q_{N,\zeta}$, $\zeta\ne1$, has degree
$s_N(\zeta)-1$; see \cite{beckgesselkomatsu,beckrobins,mcmullen}.
This proves constancy above degree $s_N-1$. At that degree the poles
of maximal order contribute nonzero coefficients of distinct
characters on $\mathbb Z/L_N\mathbb Z$; their linear independence
proves nonconstancy. A root of order $q$ has pole order at most $M_N(q)$.

For odd $q\ge3$, an injection from the fractions counted by $M_N(q)$
to those counted by $M_N(2)$ is
\[
 \frac ab\longmapsto
 \begin{cases}(2a/q)/b,&b\text{ odd},\\ b/a,&b\text{ even}.
 \end{cases}
\]
Both images are reduced and belong to $C_N$. Each branch is injective;
their image denominators are respectively coprime to $q$ and divisible
by $q$. For even $q$, the source is already a subset of the target.
For odd $q$ and $N\ge7$, let $m=N-1$ for odd $N$ and $m=N-2$ for
even $N$. The fractions $m/(m\pm1)$ belong to $C_N$, and at least
one denominator is not divisible by $q$. That fraction is outside both
branches, since $m\ge N-2>2N/3\ge2N/q$. For even $q\ge4$, the
fraction $2/3$ proves strictness. The remaining cases are
\[
 \begin{array}{c|rrrrr}
 N\backslash q&2&3&4&5&6\\\hline
 3&1&1&0&0&0\\4&2&2&1&0&0\\
 5&4&3&2&3&0\\6&5&4&2&4&1,
 \end{array}
\]
with $M_N(q)=0$ for $q>N$.

Finally, an even numerator is coprime only to odd denominators.
M\"obius inversion on these pairs and the same floor estimates as in
\eqref{eq:primitive-power-sum} give
\[
 M_N(2)=\frac{N^2}{4}\sum_{\substack{v\ge1\\2\nmid v}}
               \frac{\mu_{\mathrm M}(v)}{v^2}+O(N\log N)
       =\frac{2N^2}{\pi^2}+O(N\log N).
\]
Here removing denominator one costs $O(N)$, and the odd-prime Euler
product is $1/((1-2^{-2})\zeta(2))=8/\pi^2$.
Together with \eqref{eq:coprime-square}, this proves the final assertion.
\end{proof}

For $N=3$ one finds
\[
 A_3(z)=(1+z^2)(1+z+z^2),\qquad
 R_3(m)=\frac{m^3}{6}+\frac{3m^2}{4}+\frac{4m}{3}
                         +\frac78+\frac{(-1)^m}{8}.
\]
Its least quasipolynomial period is two although $L_3=6$, an example
of period collapse \cite{becksamwoods}. For $N=4$, grouping residues by
their denominator-three component gives
\[
 \begin{split}
 A_4(z)&=(1+z+3z^2+2z^3+2z^4)(1+2z+5z^2+5z^3+2z^4+z^5)\\
       &=(1+z)(1+z+z^2)(1+z+6z^2+3z^3+9z^4+2z^5+2z^6).
 \end{split}
\]
Its zeros at primitive second and third roots are simple, and it is
nonzero at primitive fourth roots. Since the denominator of $H_4$ is
$(1-z)^3(1-z^2)(1-z^3)^2(1-z^4)$, all nontrivial poles are simple:
$s_4=1$, its constant term is its only nonconstant coefficient, and its
least period is twelve. More generally, exact expansion of the residue product gives
\[
 \begin{array}{c|rrrrr}
 N&3&4&5&6&7\\\hline
 M_N(2)&1&2&4&5&8\\A_N(-1)&2&0&120&1200&235200\\s_N&1&1&4&5&8.
 \end{array}
\]
Among these cases only $N=4$ has cancellation at $-1$; the finite
arithmetic certificate is included in the numerical supplement.\footnote{The numerical
supplement is supplied in the source archive under \texttt{anc/}, with an
index in \texttt{anc/README.txt}.}

\subsection{The residue distribution}\label{sec:residue-distribution}
Choose the vector $r$ uniformly from the zero-sum fiber in
\eqref{eq:residue-numerator}, and put
$W_N=\sum(a/b)r_{a/b}$ and $h_N=\lceil N/2\rceil$.
Then $\mathbb P(W_N=j)=[z^j]A_N(z)/A_N(1)$.
Write $\widetilde W_N=\sum(a/b)\widetilde r_{a/b}$ for independent
uniform residues $\widetilde r_{a/b}\in\{0,\ldots,b-1\}$.

\begin{lemma}\label{lem:residue-independence}
Every collection of fewer than $h_N$ conditioned coordinates is
jointly independent and uniform, and some collection of $h_N$
coordinates fails to have this distribution.
\end{lemma}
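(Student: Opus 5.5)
The residue vector $r$ is uniform on the kernel of the surjective homomorphism $\psi:G=\prod_{a/b\in C_N}\mathbb Z/b\mathbb Z\to L_N^{-1}\mathbb Z/\mathbb Z$ from Theorem~\ref{thm:residue-numerator}. So the plan is to use Fourier analysis on the finite abelian group $G$. For a set $S$ of coordinates, the projection of the uniform measure on $K=\ker\psi$ to $\prod_{\alpha\in S}\mathbb Z/b_\alpha\mathbb Z$ is uniform if and only if this projection restricted to $K$ is onto. Equivalently, no nontrivial character of $G_S=\prod_{\alpha\in S}\mathbb Z/b_\alpha$ is trivial on $K$.

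The characters of $G$ trivial on $K$ are exactly those of the form $r\mapsto e^{2\pi i j\psi(r)}$, namely $r\mapsto\prod_{a/b}e^{2\pi i j a r_{a/b}/b}$ for $j\in\mathbb Z/L_N$. This matches the filter in Proposition~\ref{prop:root-unity-filter}. Such a character factors through $G_S$ precisely when $ja/b\in\mathbb Z$ for every $a/b\notin S$, that is, when $b\mid j$ for all denominators $b$ occurring outside $S$ (using $(a,b)=1$). It is nontrivial on $G_S$ when some $b_\alpha$ with $\alpha\in S$ fails to divide $j$. Hence:
\[
\text{$S$ is jointly uniform}\iff \text{every } j \text{ divisible by all denominators outside } S \text{ is divisible by all denominators in } S,
\]
that is, $\lcm\{b:\alpha\notin S\}=L_N$, where $\lcm$ is taken over the denominators in $C_N\setminus S$.

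The lemma therefore reduces to a combinatorial statement: the minimal number of fractions whose removal lowers $\lcm$ of the remaining denominators below $L_N$ is exactly $h_N=\lceil N/2\rceil$. To lower the lcm we must kill the full power $p^k\le N$ of some prime $p$ (with $p^{k+1}>N$). This means deleting every fraction whose denominator is divisible by $p^k$. Those denominators are $b=p^k u$ with $u\le N/p^k<p$ and $u\ge1$, and the numerators run over $a\le N$ coprime to $b$.

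For the upper bound, take $p=2$ and $2^k$ the largest power of two at most $N$. Then $N/2^k<2$, so $u=1$ and only $b=2^k$ occurs; its numerators are the odd $a\le N$, giving $\lceil N/2\rceil=h_N$ fractions. Deleting them leaves lcm $L_N/2$, so these $h_N$ coordinates are not jointly uniform. (For $N=2$ this is the single fraction $1/2$, whose conditioned residue is forced to $0$.)

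For the lower bound, fix any prime power $p^k$. The numerators coprime to $b=p^k$ number at least $N-\lfloor N/p\rfloor\ge\lceil N/2\rceil$, with equality only when $p=2$. So every such deletion set has size at least $h_N$. Consequently any $S$ with $|S|<h_N$ leaves the lcm equal to $L_N$, and $S$ is jointly uniform, which by definition means independent and uniform.

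The main care point is the character-duality step: the annihilator of $K$ in $\widehat G$ is exactly the image of $\widehat{L_N^{-1}\mathbb Z/\mathbb Z}$, which uses the surjectivity of $\psi$ from Theorem~\ref{thm:residue-numerator}. Identifying when such a character is nontrivial yet pulled back from $G_S$ also uses the reducedness of the fractions. The rest is a short counting argument.
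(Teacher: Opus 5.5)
Your proof is correct and is essentially the paper's argument. Your criterion that the surviving denominators have least common multiple $L_N$ is the paper's statement that the surviving fractions generate $(L_N^{-1}\mathbb Z)/\mathbb Z$, and your annihilator computation is the Fourier-dual form of its equal-fiber count. The counting is also the same: each $q_p$ carries $N-\lfloor N/p\rfloor\ge h_N$ numerators, and deleting the $h_N$ fractions with denominator $q_2$ gives sharpness. One harmless slip: equality $N-\lfloor N/p\rfloor=\lceil N/2\rceil$ also holds for $N=p=3$, not only for $p=2$, but your argument uses only the inequality.
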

\begin{proof}
For each prime $p\le N$, its largest power $q_p\le N$ occurs as a
denominator with $N-\lfloor N/p\rfloor\ge h_N$ coprime numerators.
After deleting fewer than $h_N$ coordinates, the surviving fractions
therefore still generate $(L_N^{-1}\mathbb Z)/\mathbb Z$. Equal
fibers leave every assignment of the deleted coordinates equally likely.
For sharpness, delete the $h_N$ coordinates of denominator $q_2$.
This is the only denominator divisible by $q_2$, so all remaining
denominators divide $L_N/2$. The assignment $r_{1/q_2}=1$ and all
other deleted residues zero has no completion to total residue zero,
although it has positive probability under independent uniforms.
\end{proof}

\begin{theorem}\label{thm:arithmetic-asymmetry}
For $N\ge3$,
\begin{equation}\label{eq:residue-mean}
 \mathbb EW_N=\frac{A_N'(1)}{A_N(1)}=\frac{U_N-\tau_N}{2}.
\end{equation}
For every $N\ge1$, $A_N$ is palindromic if and only if $N\le3$.
\end{theorem}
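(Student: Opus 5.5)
The plan is to read the mean off Lemma~\ref{lem:residue-independence}, and then to show that palindromicity for $N\ge4$ would force $\tau_N$ to be an integer, which a prime-denominator computation rules out.

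\emph{The mean.} For $N\ge3$ we have $h_N=\lceil N/2\rceil\ge2$. So a single coordinate is a collection of fewer than $h_N$ coordinates, and by Lemma~\ref{lem:residue-independence} each conditioned $r_{a/b}$ is uniform on $\{0,\ldots,b-1\}$. Hence $\mathbb E r_{a/b}=(b-1)/2$. By linearity,
\[
\mathbb EW_N=\sum_{a/b\in C_N}\frac ab\cdot\frac{b-1}2=\frac{U_N-\tau_N}2 .
\]
The identity $\mathbb EW_N=A_N'(1)/A_N(1)$ is just $\mathbb P(W_N=j)=[z^j]A_N/A_N(1)$.

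\emph{Palindromic for $N\le3$.} For $N=1$ we have $A_1=1$ by convention. For $N=2$, $C_2=\{1/2\}$ and only $r=0$ has integral weight, so $A_2=1$. For $N=3$, the explicit $A_3=(1+z^2)(1+z+z^2)$ is palindromic.

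\emph{Not palindromic for $N\ge4$.} Suppose $A_N$ is palindromic of degree $D$. Its coefficients are nonnegative, so $W_N$ is symmetric about $D/2$, giving $\mathbb EW_N=D/2$. By \eqref{eq:residue-mean} this forces $D=U_N-\tau_N$. The weight $U_N-\tau_N$ is attained only by the vector $r_{a/b}=b-1$ for all $a/b$, so this vector must lie in the zero-sum fiber. Equivalently, $U_N-\tau_N\in\mathbb Z$, i.e.\ $\tau_N\in\mathbb Z$. It therefore suffices to show $\tau_N\notin\mathbb Z$ for $N\ge4$.

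Take an odd prime $p$ with $N/2<p\le N$. Since $2p>N$, denominator $p$ occurs only as $b=p$, and every other reduced denominator is coprime to $p$. The numerators with $b=p$ are all $a\le N$ except $a=p$, with sum $N(N+1)/2-p$. Reducing $\tau_N$ modulo the $p$-part, $\tau_N\in\mathbb Z$ forces
\[
p\;\Big|\;\frac{N(N+1)}2 .
\]
Because $p>N/2$ and $p$ is odd, this happens only if $p=N$ or $2p=N+1$.

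So the main obstacle is a prime-supply statement: we need an odd prime in $(N/2,N]$ different from $N$ and from $(N+1)/2$. For large $N$, the prime number theorem (or a quantitative Bertrand postulate) gives at least three primes in $(N/2,N]$, so one avoids both exceptions. The finitely many remaining $N\ge4$ are checked directly. For example, $N=4$ uses $p=3$ with $p\nmid10$. If this direct route proves awkward for some small $N$, the fallback is to compare the explicit degree of $A_N$ from the numerical supplement with the top coefficient, using that palindromicity also requires the leading coefficient to equal $A_N(0)=1$.
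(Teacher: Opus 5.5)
Your route is the paper's: the mean comes from uniformity of single coordinates, palindromicity forces $\tau_N\in\mathbb Z$, and an odd prime $p\in(N/2,N]$ with $p\ne N$ and $2p\ne N+1$ then obstructs integrality. The gap is in the prime-supply step, which is false at $N=5$. The only primes in $(5/2,5]$ are $3=(N+1)/2$ and $5=N$, so the odd-prime obstruction does not exist there, and your claim that the remaining cases are ``checked directly'' by this criterion fails for that $N$.

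At $N=5$ the non-integrality is $2$-adic. The denominators $3$ and $5$ contribute the integers $4$ and $2$, while the denominators $2$ and $4$ contribute $\tfrac92+\tfrac94=\tfrac{27}4$. Hence $\tau_5=\tfrac{51}4\notin\mathbb Z$, and your own reduction closes the case; this is exactly how the paper handles it. Your fallback, comparing the supplement's degree with the top coefficient, is unnecessary and, as stated, is not an argument.

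For the remaining check to be genuinely finite, replace the prime number theorem by an explicit threshold. The paper uses the third Ramanujan prime $17$: for $N\ge17$ there are at least three primes in $(N/2,N]$, all odd, so one avoids both exceptions. For $4\le N\le16$ a suitable prime exists for every $N$ except $N=5$.

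Your treatment of $N\le3$ through the explicit $A_1,A_2,A_3$ is fine. The paper instead notes $\tau_3=3\in\mathbb Z$ and uses the reflection $r_{a/b}\mapsto b-1-r_{a/b}$ of the zero-sum fiber.
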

\begin{proof}
Uniform single coordinates give the mean. Palindromicity of degree $s$
would give mean $s/2$, hence $s=U_N-\tau_N$ and integral $\tau_N$.
Conversely, integral $\tau_N$ makes $r_{a/b}\mapsto b-1-r_{a/b}$ a
reflection of the zero-sum fiber, taking weight $w$ to
$U_N-\tau_N-w$; the all-maximal vector lies in that fiber.

For an odd prime $N/2<p\le N$, the only contribution to $\tau_N$
with denominator divisible by $p$ is
$p^{-1}\sum_{a\le N,\,p\nmid a}a=N(N+1)/(2p)-1$.
Integrality thus forces $p\mid N(N+1)/2$, possible in this interval
only for $p=N$ or $p=(N+1)/2$. The interval $(N/2,N]$ contains at
least three primes for $N\ge17$, because the third Ramanujan prime
is seventeen \cite{sondow}. For the remaining cases violating primes are
\[
 \begin{array}{c|cccc}
 N&4&6,7,8&9,10,11,12&13,14,15,16\\\hline p&3&5&7&11.
 \end{array}
\]
Finally $\tau_5=51/4$, $\tau_3=3$, and $A_1=A_2=1$.
\end{proof}

\begin{remark}\label{cor:second-quasipolynomial-coefficient}
For fixed $N\ge3$, the mean identity gives the two leading principal
terms $(N!/L_N)\{(1-z)^{-d_N}+(\tau_N-d_N)(1-z)^{-d_N+1}/2\}$
at $z=1$. Other poles have order at most $d_N-(N-1)\le d_N-2$;
hence coefficient extraction gives
\begin{equation}\label{eq:second-quasipolynomial-coefficient}
 R_N(m)=\frac{N!}{L_N}\left\{\frac{m^{d_N-1}}{(d_N-1)!}
        +\frac{\tau_N m^{d_N-2}}{2(d_N-2)!}\right\}
           +O_N(m^{d_N-3}).
\end{equation}
\end{remark}

\begin{theorem}\label{thm:residue-moments}
As formal Taylor series at zero,
\begin{equation}\label{eq:residue-cumulant-identity}
 \log\frac{A_N(e^t)}{A_N(1)}
 \equiv\sum_{a/b\in C_N}\log\frac{e^{at}-1}{b(e^{(a/b)t}-1)}
                                    \pmod{t^{h_N}}.
\end{equation}
Thus $W_N$ and $\widetilde W_N$ have identical moments of integer
orders below $h_N$, and all odd centered moments in this range vanish.
For $N\ge5$,
\begin{equation}\label{eq:residue-variance}
 \operatorname{Var}(W_N)=\frac1{12}\sum_{a/b\in C_N}a^2(1-b^{-2}).
\end{equation}
For $2r<h_N$, their even cumulants satisfy
\begin{equation}\label{eq:residue-even-cumulants}
 \operatorname{cum}_{2r}(W_N)=\frac{\mathsf B_{2r}}{2r}
       \sum_{a/b\in C_N}\bigl(a^{2r}-(a/b)^{2r}\bigr),
\end{equation}
where $r$ is a positive integer and
$t/(e^t-1)=\sum_{j\ge0}\mathsf B_jt^j/j!$.
For $N\ge4$, the first moment discrepancy is
\begin{equation}\label{eq:first-moment-discrepancy}
 \mathbb EW_N^{h_N}-\mathbb E\widetilde W_N^{h_N}
 =(-1)^{h_N}h_N!\,
    \frac{\displaystyle\prod_{1\le a\le N,\,2\nmid a}a}{(2q_2)^{h_N}},
\end{equation}
with $q_2$ the largest power of two at most $N$. If $h_N$ is odd,
the right side also equals $\mathbb E(W_N-\mathbb EW_N)^{h_N}$.
\end{theorem}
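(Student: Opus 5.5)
The plan is to apply the root-of-unity filter \eqref{eq:root-filter-A} at $z=e^t$ and divide by $A_N(1)=L_N^{-1}\prod b$ from Theorem~\ref{thm:residue-numerator}. The $j=0$ summand is then
\[
 \prod_{a/b\in C_N}\frac{e^{at}-1}{b(e^{(a/b)t}-1)}=\mathbb E e^{t\widetilde W_N},
\]
because each factor $(e^{at}-1)/(b(e^{(a/b)t}-1))$ is the moment generating function of a uniform residue on $\{0,\dots,b-1\}$ with step $a/b$.

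For $j\not\equiv0\pmod{L_N}$, the factor for $a/b$ is $(1-e^{at})/(1-e^{2\pi i ja/b}e^{(a/b)t})$. Since $(a,b)=1$, this factor vanishes to order exactly one at $t=0$ precisely when $b\nmid j$; otherwise it is analytic and nonzero there. Some prime power $q_p$ fails to divide $j$, and $q_p$ carries $N-\lfloor N/p\rfloor\ge h_N$ numerators, as in Lemma~\ref{lem:residue-independence}. Hence every $j\ne0$ contributes $O(t^{h_N})$. This gives $\mathbb E e^{tW_N}=\mathbb E e^{t\widetilde W_N}+O(t^{h_N})$, and taking logarithms yields \eqref{eq:residue-cumulant-identity}. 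Equality of moments below order $h_N$ follows at once.

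For the cumulants, write each summand as $\log\frac{e^{x}-1}{x}\big|_{x=at}-\log\frac{e^{x}-1}{x}\big|_{x=(a/b)t}$, using that the factor $b$ cancels the ratio of the $x$'s. Then expand
\[
 \log\frac{e^x-1}{x}=\frac x2+\sum_{r\ge1}\frac{\mathsf B_{2r}}{2r}\frac{x^{2r}}{(2r)!},
\]
which follows by integrating $\frac{d}{dx}\log\frac{e^x-1}{x}=\frac1{e^x-1}+1-\frac1x$ against the Bernoulli series. There are no odd terms beyond the linear one, so odd cumulants of order at least three vanish below $h_N$. This is the vanishing of odd centered moments, and it also follows from the symmetry of each uniform residue. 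The even coefficients give \eqref{eq:residue-even-cumulants}. The case $r=1$, with $\mathsf B_2=1/6$, gives \eqref{eq:residue-variance}, and this needs $2<h_N$, i.e.\ $N\ge5$.

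For the discrepancy \eqref{eq:first-moment-discrepancy}, I must locate exactly which $j\ne0$ produce a zero of order exactly $h_N$; this is the main obstacle. I expect these to be the single residue class $j=L_N/2$. There the vanishing factors are exactly those with $q_2\mid b$, i.e.\ $b=q_2$ with odd $a$, which gives $h_N$ factors. For any other $j$, some odd prime power $q_p\nmid j$, or else $j$ contributes the same zero set plus more. That leaves at least $N-\lfloor N/p\rfloor\ge N-\lfloor N/3\rfloor>h_N$ vanishing factors for $N\ge4$; a short check handles small $N$. At $j=L_N/2$, $L_N/q_2$ is odd, so the phase $e^{\pi i aL_N/q_2}$ is $-1$ for $b=q_2$ and $+1$ for all other $b$, since then $L_N/b$ is even. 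The $b=q_2$ factors are $(1-e^{at})/(1+e^{at/q_2})=-at/2+O(t^2)$. The remaining factors tend to $b$. After dividing by $A_N(1)$, the leading term is $\prod_{a\text{ odd}}(-at/2)/q_2^{h_N}$. Reading off $t^{h_N}/h_N!$ gives the stated formula.

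Finally, for odd $h_N$: $W_N$ and $\widetilde W_N$ have the same mean and the same lower moments. Expanding $(W-\mu)^{h_N}$ binomially, the centered moments therefore differ by the same amount as the raw moments. The independent model has vanishing odd centered moments by symmetry, which gives the last claim.
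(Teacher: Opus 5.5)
Your proposal is correct and is essentially the paper's proof. It uses the same root-of-unity filter and identifies $j=L_N/2$ as the unique nonzero character with a zero of order exactly $h_N$, with the same normalized leading factors $-at/(2q_2)$ and the same Bernoulli expansion of $\log((e^x-1)/x)$. The only variation is that you obtain the congruence modulo $t^{h_N}$ directly from the filter rather than from Lemma~\ref{lem:residue-independence}, which rests on the same count $N-\lfloor N/p\rfloor\ge h_N$.

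Do make the clause ``the same zero set plus more'' explicit, as the paper does. If every odd $q_p$ divides $j$, then $j=(L_N/q_2)k$ with $1\le k<q_2$, so $q_2\nmid j$. If moreover $k\ne q_2/2$, then $q_2/2\nmid j$, so the denominator $q_2/2\ge2$ supplies zeros beyond the $h_N$ coming from $b=q_2$.
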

\begin{proof}
Lemma~\ref{lem:residue-independence} matches every monomial of total
degree below $h_N$. The independent moment generating function is
$\prod_{a/b}(e^{at}-1)/(b(e^{(a/b)t}-1))$, proving
\eqref{eq:residue-cumulant-identity}. The expansion
\[
 \log\frac{e^x-1}{x}
       =\frac x2+\sum_{r\ge1}\frac{\mathsf B_{2r}x^{2r}}{2r(2r)!}
\]
follows by differentiating the defining Bernoulli series and gives the
cumulant formulas. The centered independent sum is symmetric.

For the first discrepancy, normalize \eqref{eq:root-filter-A}:
\begin{equation}\label{eq:residue-Fourier-mgf}
 \mathbb Ee^{tW_N}=\frac1{\prod_{a/b\in C_N}b}
       \sum_{j=0}^{L_N-1}\prod_{a/b\in C_N}
                 \frac{1-e^{at}}{1-e^{2\pi i ja/b}e^{(a/b)t}}.
\end{equation}
The $j=0$ term is $\mathbb Ee^{t\widetilde W_N}$. A factor in another
term has a simple zero exactly when $b\nmid j$, and otherwise has
removable value $b$. If $q_p\nmid j$ for an odd prime $p$, the
denominator-$q_p$ block supplies $N-\lfloor N/p\rfloor>h_N$ zeros.
Otherwise write $L_N=q_2v$, $v$ odd, and $j=vk$, $1\le k<q_2$.
The denominator-$q_2$ block gives $h_N$ zeros. Unless $k=q_2/2$,
the denominator $q_2/2\ge2$ supplies additional zeros. Hence
$j=L_N/2$ is the unique nonzero-character term with a zero of order $h_N$.
Only its denominator-$q_2$ factors vanish; each has normalized leading
term $-at/(2q_2)$, while all other normalized factors equal one at zero.
Their product gives \eqref{eq:first-moment-discrepancy} after multiplying
by $h_N!$. Lower moments, including the mean, agree, so centering
preserves this first difference; for odd $h_N$ the independent centered
moment is zero.
\end{proof}

\begin{lemma}\label{lem:residue-moment-scales}
As $N\to\infty$,
\begin{align}
 \mathbb EW_N&=\frac{3N^3}{2\pi^2}+O(N^2\log N),
                                          \label{eq:residue-mean-scale}\\
 \operatorname{Var}(W_N)&=\frac{N^4}{6\pi^2}+O(N^3\log N).
                                          \label{eq:residue-variance-scale}
\end{align}
\end{lemma}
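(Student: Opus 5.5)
The plan is to substitute the exact formulas \eqref{eq:residue-mean} and \eqref{eq:residue-variance} into the Möbius power-sum asymptotic \eqref{eq:primitive-power-sum}. The only extra work is removing the pairs with $b=1$ and bounding the terms that carry negative powers of $b$.

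\emph{Mean.} By \eqref{eq:residue-mean}, $\mathbb EW_N=(U_N-\tau_N)/2$.

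First I handle $U_N$. It equals $\sum_{(a,b)=1,\,a,b\le N}a$ minus the $b=1$ terms, which total $\sum_{a\le N}a=O(N^2)$. Applying \eqref{eq:primitive-power-sum} with $j=1$ gives
\[
U_N=\frac{N^3}{2\zeta(2)}+O(N^2\log N)=\frac{3N^3}{\pi^2}+O(N^2\log N).
\]

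Next I bound $\tau_N$ trivially:
\[
\tau_N=\sum_{b\le N}\frac1b\sum_{a\le N}a\le N^2\sum_{b\le N}\frac1b=O(N^2\log N).
\]
This is absorbed into the error term. Halving $U_N$ then gives \eqref{eq:residue-mean-scale}.

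\emph{Variance.} For $N\ge5$ I use \eqref{eq:residue-variance} and split
\[
\operatorname{Var}(W_N)=\frac1{12}\sum_{a/b\in C_N}a^2-\frac1{12}\sum_{a/b\in C_N}\frac{a^2}{b^2}.
\]

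For the first sum, \eqref{eq:primitive-power-sum} with $j=2$ gives $N^4/(3\zeta(2))+O(N^3\log N)$. Removing $b=1$ costs $\sum_{a\le N}a^2=O(N^3)$. Since $\zeta(2)=\pi^2/6$, dividing by $12$ yields $N^4/(6\pi^2)+O(N^3\log N)$.

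For the second sum,
\[
\sum_{a/b\in C_N}\frac{a^2}{b^2}\le N^3\sum_{b\ge2}b^{-2}=O(N^3).
\]
This lies within the error term, so \eqref{eq:residue-variance-scale} follows.

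There is no real obstacle. The one point to check is that the error term $O_j(N^{j+1}\log N)$ in \eqref{eq:primitive-power-sum} stays of that size after deleting the $b=1$ pairs. It does, because those pairs contribute only $O(N^{j+1})$. The restriction $N\ge5$ in \eqref{eq:residue-variance} is harmless for an asymptotic statement.
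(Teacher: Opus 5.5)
Your proposal is correct and follows the paper's proof essentially verbatim. Like the paper, you apply \eqref{eq:primitive-power-sum} with $j=1,2$, remove the $b=1$ pairs at cost $O(N^{j+1})$, and absorb $\tau_N=O(N^2\log N)$ and $\sum_{a/b\in C_N}a^2/b^2=O(N^3)$ into the error terms of \eqref{eq:residue-mean} and \eqref{eq:residue-variance}.
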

\begin{proof}
Apply \eqref{eq:primitive-power-sum} with $j=1,2$, removing $b=1$
at cost $O_j(N^{j+1})$. Equations \eqref{eq:residue-mean} and
\eqref{eq:residue-variance}, with $\tau_N=O(N^2\log N)$ and
$\sum_{a/b\in C_N}a^2/b^2=O(N^3)$, give the result.
\end{proof}

\begin{theorem}\label{thm:residue-numerator-clt}
For $N\ge5$, put $\mu_N=\mathbb EW_N$ and
$\sigma_N^2=\operatorname{Var}(W_N)$. For every real $x$,
\begin{equation}\label{eq:residue-numerator-clt}
 \frac1{A_N(1)}\sum_{j\le\mu_N+x\sigma_N}[z^j]A_N(z)
   \longrightarrow\mathcal N(x)
        =\frac1{\sqrt{2\pi}}\int_{-\infty}^x e^{-u^2/2}\,du.
\end{equation}
Every fixed standardized moment converges to the corresponding normal moment.
\end{theorem}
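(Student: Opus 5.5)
The proof will use the method of moments. For a fixed integer $k$, Theorem~\ref{thm:residue-moments} shows that the moments of $W_N$ and $\widetilde W_N$ of order below $h_N=\lceil N/2\rceil$ coincide. For large $N$ we have $k<h_N$. Hence the $k$th standardized moment of $W_N$ equals that of $\widetilde W_N$, because the mean and variance also agree (both orders are below $h_N$ once $N\ge5$). The normal law is determined by its moments. So it suffices to prove that $(\widetilde W_N-\mu_N)/\sigma_N$ has moments converging to the normal moments. Convergence of moments then yields convergence in distribution, which is exactly \eqref{eq:residue-numerator-clt}, uniformly in $x$ since $\mathcal N$ is continuous.

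For the independent model I will use cumulants. By \eqref{eq:residue-even-cumulants}, which is valid for the independent sum at every order, the odd cumulants above the first vanish and
\[
 |\operatorname{cum}_{2r}(\widetilde W_N)|
 \le\frac{|\mathsf B_{2r}|}{2r}\sum_{a/b\in C_N}a^{2r}
 \le\frac{|\mathsf B_{2r}|}{2r}\,N^{2r}d_N
 =O_r(N^{2r+2}).
\]
Lemma~\ref{lem:residue-moment-scales} gives $\sigma_N^{2r}\asymp N^{4r}$. Therefore the standardized cumulant of order $2r\ge4$ is $O_r(N^{2-2r})\to0$. The standardized moments are polynomials in the standardized cumulants, so they converge to the moments of $\mathcal N(0,1)$. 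This argument also proves the final sentence of the theorem, with the fixed moment order $k$ eventually below $h_N$.

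There are two points of care. The first is that the variance identity \eqref{eq:residue-variance} needs $2<h_N$, which holds for $N\ge5$. The second, which I expect to be the only real obstacle, is making the moment comparison rigorous at each fixed order. Lemma~\ref{lem:residue-independence} gives identical joint laws for every set of fewer than $h_N$ coordinates. Expanding $(W_N-\mu_N)^k$ as a sum of monomials in at most $k$ coordinates therefore gives equal expectations, provided $k<h_N$. The small cases, where $N$ is fixed, are irrelevant to the limit.
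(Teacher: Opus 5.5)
Your proposal is correct and follows the paper's proof. Both arguments rest on exact moment matching below $h_N$ from Lemma~\ref{lem:residue-independence}, vanishing standardized cumulants of order at least three for $\widetilde W_N$, and moment determinacy of the normal law. The only cosmetic difference is how the cumulants are bounded. You use the explicit Bernoulli formula, which does hold at every order for $\widetilde W_N$ because its cumulant generating function is exactly the right side of \eqref{eq:residue-cumulant-identity}. The paper instead uses the generic bound $O_s(N^s)$ for each bounded centered summand.
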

\begin{proof}
Each centered independent summand has absolute value at most $N/2$,
hence cumulant of fixed order $s$ bounded by $O_s(N^s)$.
There are at most $N^2$ summands, and $\sigma_N\asymp N^2$, so
\[
 \operatorname{cum}_s\left(
       \frac{\widetilde W_N-\mu_N}{\sigma_N}\right)
       =O_s(N^{2-s})\longrightarrow0\qquad(s\ge3).
\]
The first two cumulants are zero and one. The moment--cumulant formula
therefore leaves only pairings in each limiting moment, giving the
normal moments. Since $h_N\to\infty$, exact moment matching transfers
every such limit to $(W_N-\mu_N)/\sigma_N$. The normal law is
determined by its moments, so moment convergence implies weak
convergence \cite[Theorem~30.2]{billingsley}.
\end{proof}

\section{The arithmetic correction}
\label{asym:section}

For $\rho>0$, define
\begin{equation}\label{asym:kappa}
 \kappa(\rho)=\frac1{\sqrt{2\rho}}
 \sum_{k=1}^{\infty}\int_{1/(k+1)}^{1/k}
 \min_{w\in L_k^{-1}\mathbb Z_{\geq0}}
 \left(\sqrt w-\pi\sqrt{\frac{\rho\Phi_k t}{3}}\right)^2\,dt.
\end{equation}
The minimum includes zero; for $k=1$ the lattice is
$\mathbb Z_{\ge0}$.

\begin{theorem}\label{asym:main}
For every compact interval $I\subset(0,\infty)$,
\begin{equation}\label{asym:main-formula}
 \log R_N(\lfloor\rho N\rfloor)
 =\sqrt{2\rho}\,N^{3/2}
 -\kappa(\rho)\frac{N^{3/2}}{\log N}
 +o_I\!\left(\frac{N^{3/2}}{\log N}\right)
\end{equation}
uniformly for $\rho\in I$.  The function $\kappa$ is continuously differentiable and strictly
positive on $(0,\infty)$.  In particular, the diagonal sequence satisfies
\[
 \log R_N(N)=\sqrt2\,N^{3/2}
 -\kappa(1)\frac{N^{3/2}}{\log N}
 +o\!\left(\frac{N^{3/2}}{\log N}\right).
\]
\end{theorem}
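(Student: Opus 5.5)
The proof will follow the strategy announced in the introduction: a saddle-point (Boltzmann) analysis of the unconstrained product, followed by an arithmetic correction coming from the integrality constraint on prime-denominator blocks.

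\emph{Step 1: the unconstrained product.} Write $m=\lfloor\rho N\rfloor$ and $\beta=\sqrt{N/(2\rho)}$. Introduce the independent model in which each $x_\alpha$ is geometric with parameter $e^{-\beta\alpha}$, and set
\[
F_N(\beta)=\prod_{\alpha\in C_N}(1-e^{-\beta\alpha})^{-1}.
\]
Then
\[
R_N(m)=e^{\beta m}F_N(\beta)\,\mathbb P_\beta\Bigl(\sum\alpha x_\alpha=m\Bigr).
\]
I would split this probability as $\mathbb P_\beta(\sum\alpha x_\alpha\in\mathbb Z)$ times a conditional local probability.

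Using the counting function $\#\{\alpha\le x\}=x(\Phi_N-1)+O(N\log N)$ from the introduction, Euler--Maclaurin gives
\[
\log F_N(\beta)=\frac{\pi^2\Phi_N}{6\beta}+O(N\log^2 N).
\]
Since $\Phi_N=3N^2/\pi^2+O(N\log N)$, optimizing over $\beta$ yields $\sqrt{2\rho}\,N^{3/2}$ with error $O(N\log^2N)=o(N^{3/2}/\log N)$.

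The conditional local probability, that the integer total equals $m$ given integrality, should be $\exp(O(\log N))$. I would get this from a local limit theorem for the integer-valued total. The ingredients are that its variance is of order $N^{5/2}$, and that enough free mass remains, for instance from the numerator-$a$, denominator-$1$-like combinations $b\cdot(a/b)$.

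\emph{Step 2: the integrality probability.} This is the heart of the matter. Let $T_b=\sum_a a x_{a/b}$. The total is integral iff $\sum_b T_b/b\in\mathbb Z$.

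For a prime $p\in(N/2,N]$, only denominator $p$ is divisible by $p$. Hence integrality forces $p\mid T_p$, i.e.\ the block total $w=T_p/p$ is an integer.

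More generally, set $t=p/N$ and $k=\lfloor 1/t\rfloor$, so $t\in(1/(k+1),1/k]$. The fractions whose denominators lie in the range $(N/(k+1),N/k]$ and are divisible by $p$ are $a/(jp)$ with $j\le k$, and these sum to an element of $L_k^{-1}\mathbb Z$.

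Under the geometric model, the block sum $\sum_{a,\,j\le k}(a/(jp))x$ has a density on scale $\sqrt N$ with mean of order $1$. Its log-density should be a large-deviation rate, computed by the tilted Laplace transform of $\sum_{j\le k}\varphi$-type counts, which is where $\Phi_k$ enters. Concretely, I expect
\[
-\sqrt N\Bigl(\sqrt w-\pi\sqrt{\rho\Phi_k t/3}\Bigr)^2\Big/\sqrt{2\rho},
\]
since a block total $w$ costs, Gaussian-square-root style, the difference of square roots. The factor $\sqrt w$ arises because the block is itself a small partition problem whose log-count at total $w$ is proportional to $\sqrt{w}$.

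Minimizing over the lattice gives the per-prime cost. Summing over primes, with density $N\,dt/\log N$ by the prime number theorem, turns the sum into the integral $\kappa(\rho)N^{3/2}/\log N$.

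\emph{Step 3: upper and lower bounds.} For the upper bound I would use only finitely many ranges $k\le K$. The blocks for distinct primes are disjoint and independent, so the integrality events multiply; the tail $k>K$ is dropped since it only makes the probability larger. This gives $\limsup\le-\kappa_K$, and then $K\to\infty$.

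For the lower bound I would force each block in $k\le K$ to its optimal lattice value $w$. The remaining residue, from small primes and from higher ranges, is then corrected by the explicit injection mentioned in the introduction: adding a bounded number of small parts fixes the fractional part. This costs at most $\exp(o(N^{3/2}/\log N))$ because the denominators involved are small. The upper and lower bounds match as $K\to\infty$, since the tail of the series for $\kappa$ converges; each term is $O(1/k^2)$ times a bounded quantity, the lattice being dense.

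I expect this lower-bound correction to be the main obstacle, together with uniformity of the per-block large-deviation estimates in $p$ and $\rho$.

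\emph{Step 4: regularity of $\kappa$.} Positivity holds because the target $\pi\sqrt{\rho\Phi_k t/3}$ varies continuously in $t$ while the admissible values of $\sqrt w$ are discrete, so the minimum is positive on a set of $t$ of positive measure.

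For $C^1$: for a.e.\ $t$ the minimizer is unique and locally constant in $\rho$. Hence one may differentiate under the integral, obtaining an integrand that is continuous in $\rho$ and dominated by a convergent series. This justifies the $C^1$ claim. Uniformity over compact $I$ follows since all estimates depend continuously on $\rho$.
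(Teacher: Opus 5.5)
\textbf{There is a genuine gap in your lower bound (Step 3).} Your upper bound (finitely many shells, independent block events, then $K\to\infty$) is the paper's, and your per-block rate matches $\sqrt N\,G_k(u,t)$.

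After the blocks with $k\le K$ are placed on their lattices, the total still has a nonzero $p$-primary component for essentially every prime $p\le N/(K+1)$. Only a part whose denominator is divisible by $p$ can change that component. There are $\pi_{\mathrm{pr}}(N/(K+1))\sim N/((K+1)\log N)$ such primes, many of size comparable to $N/(K+1)$. So the repair uses neither \emph{a bounded number of parts} nor \emph{small denominators}.

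The paper's repair adds fewer than $q_p$ copies of $1/q_p$ for each such $p$. This adds weight up to $s_{K,N}=\pi_{\mathrm{pr}}(N/(K+1))$ and costs $\beta s_{K,N}\sim u_\rho N^{3/2}/((K+1)\log N)$ in the exponent. That is the same order as the correction you are computing.

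No repair can do better than this order at fixed $K$. Your own upper bound with $K'>K$ shows the shells $K<k\le K'$ cost at least $(J_{K'}-J_K)(u_\rho)N^{3/2}/\log N$, and every shell integral is strictly positive. So a fixed-$K$ lower bound with loss $J_K(u_\rho)+o(1)$ would contradict it. The correct fixed-$K$ loss is $J_K(u_\rho)+u_\rho/(K+1)+o_K(1)$. The proof closes only because both $J-J_K$ and $u_\rho/(K+1)$ vanish as $K\to\infty$ after $N\to\infty$; convergence of the $\kappa$-series alone is not enough.

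The repair also needs to be injective. The paper achieves this by reserving $1/2$ and the $1/q_p$ outside the bulk alphabet, so deleting them recovers the bulk configuration. It then pads with copies of $1/2$ to land exactly on $m$.

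\textbf{Your local step in Step 1 is unsupported and not usable as stated.} Under $\mathbb P_\beta$ one has $\operatorname{Var}_\beta S=N^2/\beta^3\asymp\sqrt N$, not $N^{5/2}$. The law conditioned on $S\in\mathbb Z$ is not a product measure. At your fixed $\beta=\sqrt{N/(2\rho)}$, putting the $\asymp N/\log N$ large-prime blocks on lattice values moves the conditional mean of $S$ by order $N/\log N$ in general, far beyond the conditional standard deviation. So $\mathbb P_\beta(S=m\mid S\in\mathbb Z)=\exp(O(\log N))$ cannot be expected there, and the paper leaves even a much weaker local estimate for the integer-total law open.

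You only need $\exp(-o(N^{3/2}/\log N))$, and the paper gets this without a local limit theorem:
\begin{itemize}
\item condition the bulk only on the product event $\mathcal E_{K,N}$;
\item re-choose $\beta$ with $\widehat{\mathbb E}_\beta S=m-s_{K,N}-\tfrac12N^{2/3}$, so that $\beta/\sqrt N=u_\rho+O(1/\log N)$;
\item use Chebyshev on a window of width $N^{2/3}$, then inject;
\item recover the main term from $\beta m+N^2/(2\beta)\ge N\sqrt{2m}$.
\end{itemize}

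\textbf{Step 2 is only heuristic.} Forcing a block onto a lattice value $w$ needs a lower bound for the number of block configurations of exact total $w$. The paper proves it by deleting the unit fractions $1/(jp)$, placing the rest in a window just below $w$, and filling the gap exactly with the generators $L_k/j$ via a Frobenius-type lemma.

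Your $C^1$ argument, by a.e.\ differentiation under the $t$-integral, is a workable alternative to the paper's change of variables.
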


The prime-distribution input is
$\pi_{\mathrm{pr}}(x)\sim x/\log x$ \cite{montgomeryvaughan,tenenbaum}.

\subsection{Product estimates}

For $\beta>0$, set
\[
 Z_N(\beta)=\prod_{\alpha\in C_N}(1-e^{-\beta\alpha})^{-1},
 \qquad F_N(\beta)=\log Z_N(\beta).
\]
Under $\mathbb P_\beta$, the multiplicities $X_\alpha$ are independent and
\[
 \mathbb P_\beta(X_\alpha=r)
 =(1-e^{-\beta\alpha})e^{-r\beta\alpha},\qquad r\geq0.
\]
The total weight is $S=\sum_{\alpha\in C_N}\alpha X_\alpha$.  Thus
\begin{equation}\label{asym:coefficient-identity}
 R_N(m)=e^{F_N(\beta)+\beta m}\mathbb P_\beta(S=m)
 \qquad(m\in\mathbb Z_{\geq0}).
\end{equation}
The same construction applies to every subalphabet of $C_N$.

\begin{lemma}\label{asym:count-inequality}
Let $\beta>0$ and let $\mathcal A$ be a collection of configurations with nonnegative
weights $s(x)$, and suppose
$Z(\beta)=\sum_{x\in\mathcal A}e^{-\beta s(x)}<\infty$.
Give $x$ probability $e^{-\beta s(x)}/Z(\beta)$.
For an event $\mathcal B\subseteq\mathcal A$,
\[
 \#\{x\in\mathcal B:s(x)=m\}
 \le e^{\beta m}Z(\beta)\mathbb P(\mathcal B).
\]
If $a\le s(x)\le b$ on $\mathcal B$, then
$|\mathcal B|\ge e^{\beta a}Z(\beta)\mathbb P(\mathcal B)$.
\end{lemma}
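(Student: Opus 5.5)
The plan is to compare the event probability with the total weight of the configurations it contains. By definition of the Gibbs measure,
\[
 \mathbb P(\mathcal B)=\frac1{Z(\beta)}\sum_{x\in\mathcal B}e^{-\beta s(x)},
 \qquad\text{equivalently}\qquad
 \sum_{x\in\mathcal B}e^{-\beta s(x)}=Z(\beta)\mathbb P(\mathcal B).
\]
Both inequalities follow by bounding this weighted sum from one side, so no analytic input beyond $Z(\beta)<\infty$ is needed.

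For the upper bound, I keep only the configurations in $\mathcal B$ with $s(x)=m$. Since every weight $e^{-\beta s(x)}$ is positive, discarding the other terms decreases the sum. This gives
\[
 e^{-\beta m}\,\#\{x\in\mathcal B:s(x)=m\}\le\sum_{x\in\mathcal B}e^{-\beta s(x)}=Z(\beta)\mathbb P(\mathcal B).
\]
Multiplying by $e^{\beta m}$ yields the first claim. The finiteness of $Z(\beta)$ guarantees that the set counted on the left is finite, because each of its elements contributes the same positive weight $e^{-\beta m}$.

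For the lower bound, I use the assumption $s(x)\ge a$ on $\mathcal B$, which gives $e^{-\beta s(x)}\le e^{-\beta a}$ termwise. Summing over $\mathcal B$,
\[
 Z(\beta)\mathbb P(\mathcal B)\le|\mathcal B|\,e^{-\beta a},
\]
and multiplying by $e^{\beta a}$ gives the second claim. The stated upper bound $s(x)\le b$ is not needed for this direction. If it is available, the same argument also yields the reverse inequality $|\mathcal B|\le e^{\beta b}Z(\beta)\mathbb P(\mathcal B)$, which is what the later applications to nearly fixed weights presumably use.

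The only point requiring care is infinite $\mathcal B$. In that case $|\mathcal B|=\infty$ and the lower bound holds trivially. In the application, the hypothesis $a\le s(x)\le b$, together with the fact that $\mathcal A$ is a set of partition configurations, makes $\mathcal B$ finite in any case.
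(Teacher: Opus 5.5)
Your proof is correct and matches the paper's argument: sum $e^{-\beta s(x)}$ over the level set $\{s(x)=m\}$ for the first inequality, and bound each weight on $\mathcal B$ by $e^{-\beta a}$ for the second. One correction to your side remark: the later applications (the block-coefficient lower bound and the main lower bound) use the stated inequality $|\mathcal B|\ge e^{\beta a}Z(\beta)\mathbb P(\mathcal B)$ with the lower endpoint $a$, not the reverse inequality with $b$.
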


\begin{proof}
Sum $e^{-\beta s(x)}$ over the level set for the first inequality;
use $e^{-\beta s(x)}\le e^{-\beta a}$ on $\mathcal B$ for the second.
\end{proof}

\begin{lemma}\label{asym:riemann}
Let
\[
 f_0(x)=-\log(1-e^{-x}),\qquad
 f_1(x)=\frac{x}{e^x-1},\qquad
 f_2(x)=\frac{x^2e^x}{(e^x-1)^2}.
\]
For every $s>0$,
\begin{align*}
 \sum_{r\geq1}f_0(sr)&=\frac{\pi^2}{6s}
       +O\bigl(\log(2+s^{-1})\bigr),\\
 \sum_{r\geq1}f_1(sr)&=\frac{\pi^2}{6s}+O(1),\\
 \sum_{r\geq1}f_2(sr)&=\frac{\pi^2}{3s}+O(1),
\end{align*}
with absolute implied constants.
\end{lemma}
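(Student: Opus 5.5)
The plan is to compare each sum with the corresponding integral, using the fact that each $f_i$ is positive and monotone on $(0,\infty)$ (or of bounded total variation) and has an explicit integral. First we record
\[
 \int_0^\infty f_0=\sum_{k\ge1}\frac1{k^2}=\frac{\pi^2}{6},\qquad
 \int_0^\infty f_1=\Gamma(2)\zeta(2)=\frac{\pi^2}{6},\qquad
 \int_0^\infty f_2=\frac{\pi^2}{3}.
\]
The first two follow by expanding $(1-e^{-x})^{-1}$ or $f_0$ as a geometric or logarithmic series and integrating termwise. The third follows by integration by parts: $f_2(x)=-x^2\frac{d}{dx}(e^x-1)^{-1}$, so $\int f_2=2\int x/(e^x-1)=2\zeta(2)$. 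The boundary terms vanish because $x^2/(e^x-1)\to0$ at both ends.

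For a nonincreasing integrable $g\ge0$ on $(0,\infty)$, the standard comparison is
\[
 \int_s^\infty g\le s\sum_{r\ge1}g(sr)\le\int_0^\infty g.
\]
Hence $\bigl|\sum_r g(sr)-s^{-1}\int_0^\infty g\bigr|\le s^{-1}\int_0^s g$. We apply this to each function in turn.

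\textbf{$f_1$.} This function is decreasing and bounded by $1$. The error is therefore $s^{-1}\int_0^s f_1\le1$, giving $O(1)$.

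\textbf{$f_0$.} This function is decreasing. For $s\le1$ we use $f_0(x)\le\log(1/x)+O(1)$ near zero, which gives $s^{-1}\int_0^s f_0=\log(1/s)+O(1)$. For $s>1$ we use $f_0(x)\le 2e^{-x}$ for $x\ge1$, which gives $s^{-1}\int_0^s f_0\le s^{-1}\int_0^\infty f_0=O(1)$. Together these yield $O(\log(2+s^{-1}))$.

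\textbf{$f_2$.} This is the one mild obstacle, since $f_2$ is not monotone on all of $(0,\infty)$. In fact $f_2(x)=(x/2)^2/\sinh^2(x/2)$, and $y/\sinh y$ is decreasing on $y>0$, so $f_2$ is decreasing from $f_2(0^+)=1$. The same comparison then gives error at most $s^{-1}\int_0^s f_2\le1$.

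As a fallback, if monotonicity were unavailable, we would instead use bounded total variation: the error in the Riemann sum is $\le V(g)+\|g\|_\infty$ times a constant. That bound is $O(1)$ for $f_1$ and $f_2$, both of which are smooth with integrable derivative.

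All the implied constants produced this way are absolute. This completes the plan; the only computational content is the three integrals above.
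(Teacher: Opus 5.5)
Your proposal is correct and follows the paper's proof: both use the same one-sided integral comparison $0\le s^{-1}\int_0^\infty g-\sum_{r\ge1}g(sr)\le s^{-1}\int_0^s g$ for nonincreasing nonnegative $g$, with the same three integrals and the same bounds on $s^{-1}\int_0^s f_i$. Your $\sinh$ identity justifies the monotonicity of $f_2$, which the paper only asserts; accordingly, your phrase that $f_2$ is ``not monotone'' should read ``not obviously monotone'', and the bounded-variation fallback is unnecessary.
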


\begin{proof}
The functions are decreasing and nonnegative, with integrals
$\pi^2/6,\pi^2/6,\pi^2/3$ by geometric-series expansion.
Apply the integral comparison
\[
 0\leq\frac1s\int_0^\infty g(x)\,dx-\sum_{r\geq1}g(sr)
 \leq\frac1s\int_0^s g(x)\,dx
\]
with $g=f_i$. Its right side is at most one for $i=1,2$ and is
$O(\log(2+s^{-1}))$ for $i=0$, since $f_0(x)=-\log x+O(x)$ at zero.
\end{proof}

\begin{lemma}\label{asym:global-free}
Fix $0<c<C<\infty$.  Uniformly for $c\sqrt N\leq\beta\leq C\sqrt N$,
\begin{align}
 F_N(\beta)&=\frac{N^2}{2\beta}+O_{c,C}(N(\log N)^2),
       \label{asym:global-F}\\
 \mathbb E_\beta S&=\frac{N^2}{2\beta^2}
       +O_{c,C}(\sqrt N\log N),\label{asym:global-M}\\
 \operatorname{Var}_\beta S&=\frac{N^2}{\beta^3}
       +O_{c,C}(\log N).\label{asym:global-V}
\end{align}
\end{lemma}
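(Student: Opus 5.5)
We write each quantity as a sum over denominators $b$, use Lemma~\ref{asym:riemann} for the sum over numerators, and then sum over $b$ with the totient estimates. Since $\alpha=a/b$ with $(a,b)=1$, we have
\[
 F_N(\beta)=\sum_{b=2}^N\sum_{\substack{a\le N\\(a,b)=1}}f_0(\beta a/b),\quad
 \mathbb E_\beta S=\frac1\beta\sum_\alpha f_1(\beta\alpha),\quad
 \operatorname{Var}_\beta S=\frac1{\beta^2}\sum_\alpha f_2(\beta\alpha).
\]
For fixed $b$, write $s=\beta/b$, which satisfies $s\ge c/\sqrt N$. We remove the coprimality condition by M\"obius inversion over $v\mid b$:
\[
 \sum_{\substack{a\ge1\\(a,b)=1}}g(sa)=\sum_{v\mid b}\mu_{\mathrm M}(v)\sum_{u\ge1}g(svu).
\]
Lemma~\ref{asym:riemann} then gives $\frac{\varphi(b)}{b}\cdot\frac{\pi^2}{6s}+O(\tau(b)\log N)$ for $g=f_0$. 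The error is bounded by the divisor count $\tau(b)$, since there are at most $\tau(b)$ squarefree $v\mid b$. For $f_1$ and $f_2$ the same holds with error $O(\tau(b))$ and main term constants $\pi^2/6$ and $\pi^2/3$. The main term equals $\pi^2\varphi(b)/(6\beta)$.

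We must also account for the truncation $a\le N$. The tail $a>N$ has $\beta a/b\ge\beta\ge c\sqrt N$, so it contributes $O(e^{-c\sqrt N/2})$ for each $b$. This is negligible for all three functions.

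Summing over $2\le b\le N$ uses $\Phi_N-1=3N^2/\pi^2+O(N\log N)$ from \eqref{eq:coprime-square} (halved, or directly $\sum\varphi$). We get
\[
 F_N=\frac{\pi^2}{6\beta}\cdot\frac{3N^2}{\pi^2}+O\Bigl(\frac{N\log N}{\beta}\Bigr)+O\Bigl(\log N\sum_{b\le N}\tau(b)\Bigr)=\frac{N^2}{2\beta}+O(N(\log N)^2),
\]
because $\sum_{b\le N}\tau(b)=O(N\log N)$.

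For the mean we multiply by $1/\beta$. The main term becomes $N^2/(2\beta^2)$, and the errors become $O(N\log N/\beta^2)+O(N\log N/\beta)=O(\sqrt N\log N)$. For the variance we multiply by $1/\beta^2$. The main term becomes $\frac{\pi^2}{3\beta^3}\cdot\frac{3N^2}{\pi^2}=N^2/\beta^3$, and the errors become $O(N\log N/\beta^3)+O(N\log N/\beta^2)=O(\log N)$. All implied constants depend only on $c$ and $C$ through $\beta\asymp\sqrt N$.

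The only delicate point is the $f_0$ error. Lemma~\ref{asym:riemann} gives $O(\log(2+1/s))$ with $1/s=b/(v\beta)\le\sqrt N/c$, hence $O(\log N)$ for each $v$. Multiplying by the divisor count and summing over $b$ is exactly what produces the $(\log N)^2$ in \eqref{asym:global-F}.
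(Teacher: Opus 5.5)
Your proposal is correct and follows essentially the same route as the paper. Both arguments extend numerators to all $a\ge1$, remove coprimality by M\"obius inversion over divisors of $b$, and apply Lemma~\ref{asym:riemann} to each divisor term. The bound $\sum_{b\le N}\sum_{d\mid b}1=O(N\log N)$ then yields the $N(\log N)^2$, $N\log N/\beta$ and $N\log N/\beta^2$ errors; the exponentially small $a>N$ tails are discarded and $\Phi_N=3N^2/\pi^2+O(N\log N)$ is substituted. Organizing the sum per denominator $b$ and writing the error with the divisor function is only a difference in bookkeeping.
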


\begin{proof}
The coprime-square count \eqref{eq:coprime-square} equals
$2\Phi_N-1$, so
\begin{equation}\label{asym:phi-sum}
 \Phi_N=\frac3{\pi^2}N^2+O(N\log N).
\end{equation}
For $i=0,1,2$, extend numerators to infinity and apply
M\"obius inversion:
\[
 \sum_{b=2}^N\sum_{\substack{a\ge1\\(a,b)=1}}f_i(\beta a/b)
 =\sum_{b=2}^N\sum_{d\mid b}\mu_{\mathrm M}(d)
                      \sum_{r\ge1}f_i(\beta dr/b).
\]
Since $\sum_{b\le N}\sum_{d\mid b}1=O(N\log N)$ and
$b/(\beta d)=O_{c,C}(\sqrt N)$, Lemma~\ref{asym:riemann}
gives extended free energy
$\pi^2(\Phi_N-1)/(6\beta)+O_{c,C}(N\log^2N)$, and mean and variance
\[
 \frac{\pi^2}{6\beta^2}(\Phi_N-1)
       +O(N\log N/\beta),\qquad
 \frac{\pi^2}{3\beta^3}(\Phi_N-1)
       +O(N\log N/\beta^2).
\]
The omitted terms have $a/b>1$; factoring
$e^{-\beta a/(2b)}\le e^{-c\sqrt N/2}$ and summing the remaining
geometric tails, with weights $1,a/b,(a/b)^2$, bounds all three
errors by $O_{c,C}(N^4e^{-c\sqrt N/2})$.
Now substitute \eqref{asym:phi-sum}.
\end{proof}

\subsection{Prime-block coefficients}

Fix a positive integer $K$. Throughout the remaining proof, $N$ tends
to infinity with $K$ fixed. In particular, assume
\begin{equation}\label{asym:standing-threshold}
N>\max\{(K+1)^2,3(K+1),4\}.
\end{equation}
All lemmas involving $K$ below use this standing assumption. In a
prime block, $1\le k\le K$ and $p$ is prime with
\begin{equation}\label{asym:shell}
 \frac{N}{k+1}<p\leq\frac Nk.
\end{equation}
The square bound in \eqref{asym:standing-threshold} gives $p>K$
and $p^2>N$; it also prevents two selected primes from dividing the
same denominator. The linear bound will keep the part $2/3$
outside every selected block in the bulk construction.
The condition $N>4$ gives $q_2\ge4$ for the maximal power of $2$
not exceeding $N$, so $1/2$ is distinct from the maximal
prime-power unit fractions reserved there.
The parts whose denominators are divisible by $p$ therefore have
denominators $p,2p,\ldots,kp$.  Let $V_p$ be their combined weight, let
$F_{N,p}$ be the logarithm of their product, and put
\[
 \Lambda_{p,k}=\frac{\pi^2p\Phi_k}{6}.
\]

\begin{lemma}\label{asym:block-free}
Fix $0<c<C<\infty$. Uniformly under \eqref{asym:shell} and for
$c\sqrt N\leq\gamma\leq C\sqrt N$,
\begin{align*}
 F_{N,p}(\gamma)&=\frac{\Lambda_{p,k}}{\gamma}+O_{K,c,C}(\log N),\\
 \mathbb E_\gamma V_p&=\frac{\Lambda_{p,k}}{\gamma^2}
                  +O_{K,c,C}(N^{-1/2}),\\
 \operatorname{Var}_\gamma V_p&=\frac{2\Lambda_{p,k}}{\gamma^3}
                  +O_{K,c,C}(N^{-1}).
\end{align*}
These estimates remain valid with the same leading terms if the $k$ parts
$1/p,1/(2p),\ldots,1/(kp)$ are deleted.
\end{lemma}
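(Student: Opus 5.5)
The block consists of the parts $a/(jp)$ with $1\le j\le k$, $(a,jp)=1$ and $1\le a\le N$. Since $p>K\ge k$ and $p^2>N$, the condition $(a,jp)=1$ is equivalent to $(a,j)=1$ together with $p\nmid a$. The plan is to mirror the proof of Lemma~\ref{asym:global-free} one denominator at a time, with all counting constants depending only on $K$. Write
\[
F_{N,p}(\gamma)=\sum_{j=1}^k\sum_{\substack{a\le N\\(a,j)=1,\ p\nmid a}}f_0\!\left(\frac{\gamma a}{jp}\right),
\]
and similarly for the mean and variance. Differentiating in $\gamma$ gives
\[
\mathbb E_\gamma V_p=\gamma^{-1}\sum f_1(\gamma a/(jp)),\qquad
\operatorname{Var}_\gamma V_p=\gamma^{-2}\sum f_2(\gamma a/(jp)).
\]

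The first step is to extend the numerator range to all $a\ge1$. The discarded terms have $a>N$, so $\gamma a/(jp)\ge\gamma N/(kp)\ge\gamma\ge c\sqrt N$. Their total is $O_K(N^{2}e^{-c\sqrt N/2})$ by the same geometric-tail bound used in Lemma~\ref{asym:global-free}, and this is negligible.

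The second step removes the restriction $p\nmid a$. The terms with $p\mid a$, say $a=pa'$, have argument $\gamma a'/j\ge c\sqrt N/K$, so their sum is again exponentially small.

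The third step applies M\"obius inversion to $(a,j)=1$ on the full range $a\ge1$:
\[
\sum_{(a,j)=1}f_i(sa)=\sum_{d\mid j}\mu_{\mathrm M}(d)\sum_{r\ge1}f_i(sdr),\qquad s=\frac{\gamma}{jp}.
\]
Now Lemma~\ref{asym:riemann} applies with $sd=\gamma d/(jp)\asymp_{K,c,C}N^{-1/2}$. The main terms are
\[
\frac{\pi^2}{6}\sum_{j\le k}\frac{jp}{\gamma}\sum_{d\mid j}\frac{\mu_{\mathrm M}(d)}{d}=\frac{\pi^2p}{6\gamma}\sum_{j\le k}\varphi(j)=\frac{\Lambda_{p,k}}{\gamma}
\]
for $f_0$ and $f_1$, and twice this for $f_2$.

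The error terms come from the $O_K(1)$ pairs $(j,d)$. For $f_0$ each pair contributes $O(\log(2+1/(sd)))=O(\log N)$. For $f_1$ and $f_2$ each pair contributes $O(1)$, and after the prefactors $\gamma^{-1}$ and $\gamma^{-2}$ this gives $O(N^{-1/2})$ and $O(N^{-1})$. Dividing the main terms by $\gamma$ and $\gamma^2$ gives $\Lambda_{p,k}/\gamma^2$ and $2\Lambda_{p,k}/\gamma^3$, as claimed.

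For the final sentence, deleting the $k$ parts $1/(jp)$ removes at most $k$ terms $f_i(\gamma/(jp))$ from each sum. These are the $a=1$ terms, where the argument is $x=\gamma/(jp)\asymp N^{-1/2}$. Using $f_0(x)=-\log x+O(x)$, each removed $f_0$ term is $O(\log N)$. Each removed $f_1$ or $f_2$ term is at most $1$, so after the prefactors it costs $O(N^{-1/2})$ or $O(N^{-1})$. All of these lie within the stated error terms.

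The main obstacle is keeping every implied constant uniform in $p$ across the shell \eqref{asym:shell} and in $\gamma\in[c\sqrt N,C\sqrt N]$. This holds because $p\asymp_K N$ makes each scale $s d$ comparable to $N^{-1/2}$ with constants depending only on $K,c,C$. It also relies on $p\nmid a$ being equivalent to $(a,jp)=1$ given $(a,j)=1$, which follows from $p>K$ and $p^2>N$ in \eqref{asym:standing-threshold}.
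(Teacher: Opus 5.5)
Your proposal is correct and follows the paper's proof essentially step for step. Like the paper, you extend to all numerators $a\ge1$ coprime to $j$, apply M\"obius inversion over $d\mid j$ together with Lemma~\ref{asym:riemann} at scales $\asymp_{K,c,C}N^{-1/2}$, discard the exponentially small contributions from $a>N$ and $p\mid a$, and bound the deleted $a=1$ terms using $f_0(x)=-\log x+O(x)$ and $f_1,f_2\le1$. (A minor point: the equivalence of $(a,jp)=1$ with ``$(a,j)=1$ and $p\nmid a$'' needs only that $p$ is prime, not the bounds $p>K$ and $p^2>N$.)
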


\begin{proof}
For a fixed $j\leq k$, initially retain all numerators coprime to $j$ and
allow $a\geq1$.  M\"obius inversion over the divisors of $j$ gives
\[
 \sum_{\substack{a\geq1\\(a,j)=1}}f_0(\gamma a/(jp))
 =\sum_{d\mid j}\mu_{\mathrm M}(d)\sum_{r\geq1}f_0(\gamma dr/(jp))
 =\frac{\pi^2p\varphi(j)}{6\gamma}+O_{K,c,C}(\log N).
\]
Lemma~\ref{asym:riemann}, applied to $f_1$ and $f_2$, gives the corresponding
mean and variance with errors $O_K(\gamma^{-1})$ and
$O_K(\gamma^{-2})$.  The excluded numerators $a>N$ have weight exceeding
$1$, and the numerators divisible by $p$ have weight at least $1/K$.
Geometric-tail summation as in Lemma~\ref{asym:global-free} bounds their
combined contributions, including the first two weighted moments, by
$O_{K,c,C}(N^4e^{-c\sqrt N/(2K)})$.  Summing over $j\leq k$ proves the
first assertions.  Deleting the indicated unit fractions changes free
energy by $O_K(\log N)$, mean by at most $k/\gamma$, and variance by at
most $k/\gamma^2$; the last two bounds use $f_1\leq1$ and $f_2\leq1$.
\end{proof}

\begin{lemma}\label{asym:local-crt}
Every integer $d\geq L_k\pi_{\mathrm{pr}}(k)$ is a nonnegative integer combination of
$ L_k/j$, $1\leq j\leq k$.
\end{lemma}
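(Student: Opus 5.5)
The approach is a Chinese remainder argument that uses only the generators $L_k/q_p$, where $q_p$ is the largest power of the prime $p$ not exceeding $k$, together with the generator $L_k/1=L_k$ from $j=1$. Each $q_p$ lies in $\{1,\ldots,k\}$, so these are among the allowed numbers $L_k/j$. For $k=1$ there are no primes, $\pi_{\mathrm{pr}}(1)=0$, and every $d\ge0$ is a multiple of $L_1=1$. So the plan below is for $k\ge2$.

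First I handle the residues. Fix $d\ge L_k\pi_{\mathrm{pr}}(k)$. For each prime $p\le k$, the integer $L_k/q_p$ is coprime to $q_p$, because $q_p$ is the exact power of $p$ dividing $L_k$. So there is a unique $c_p\in\{0,1,\ldots,q_p-1\}$ with
\[
 c_p\,\frac{L_k}{q_p}\equiv d\pmod{q_p}.
\]
Put $s=\sum_{p\le k}c_pL_k/q_p$. For a fixed prime $p$, every term with $p'\ne p$ is divisible by $q_p$. Hence $s\equiv d\pmod{q_p}$ for every $p\le k$, and since the $q_p$ are pairwise coprime with product $L_k$, we get $s\equiv d\pmod{L_k}$.

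Next I bound $s$ to make sure the leftover is nonnegative. Since $c_p\le q_p-1$,
\[
 s\le\sum_{p\le k}(q_p-1)\frac{L_k}{q_p}
   =L_k\pi_{\mathrm{pr}}(k)-\sum_{p\le k}\frac{L_k}{q_p}
   <L_k\pi_{\mathrm{pr}}(k)\le d.
\]
Thus $d-s$ is a positive multiple of $L_k$, say $d-s=eL_k$ with $e\ge0$. Then
\[
 d=e\cdot\frac{L_k}{1}+\sum_{p\le k}c_p\frac{L_k}{q_p}
\]
is the required nonnegative combination.

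There is no real obstacle. The only point needing care is that the threshold $L_k\pi_{\mathrm{pr}}(k)$ is exactly large enough to absorb the worst-case residue sum, and the display above confirms this.
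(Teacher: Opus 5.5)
Your proposal is correct and follows essentially the same route as the paper: a Chinese remainder choice of residues $c_p\in\{0,\ldots,q_p-1\}$ against the generators $L_k/q_p$, the bound $\sum_p c_pL_k/q_p<L_k\pi_{\mathrm{pr}}(k)\le d$, and absorption of the remainder by multiples of $L_k$ (the $j=1$ generator). Your explicit bound $L_k\pi_{\mathrm{pr}}(k)-\sum_p L_k/q_p$ simply spells out the strict inequality that the paper states directly.
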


\begin{proof}
For $k=1$ the assertion is immediate.  Otherwise let $q$ run over the
largest powers not exceeding $k$ of the primes at most $k$.  These $q$ are
pairwise coprime and have product $ L_k$.  For each $q$, choose the unique
$r_q\in\{0,\ldots,q-1\}$ satisfying
$r_q L_k/q\equiv d\pmod q$.  The sum
$B=\sum_q r_q L_k/q$ is congruent to $d$ modulo $ L_k$ and satisfies
$0\leq B< L_k\pi_{\mathrm{pr}}(k)$. Hence $d=B+s L_k$ with $s\geq0$ an integer.
Every generator used belongs to the displayed set.
\end{proof}

Let $a_{N,p}(w)$ denote the number of configurations in the $p$-block of
exact weight $w$; set it equal to zero if that weight is unattainable.

\begin{lemma}\label{asym:block-coefficient}
Fix $M>0$.  Uniformly under \eqref{asym:shell} and for
$w\in L_k^{-1}\mathbb Z$ with $0<w\leq M$,
\begin{equation}\label{asym:block-coefficient-formula}
 \log a_{N,p}(w)=2\sqrt{\Lambda_{p,k}w}
       +O_{K,M}(N^{3/8}+\log N).
\end{equation}
In particular these coefficients are positive for all sufficiently large
$N$, uniformly in the stated range.  Also $a_{N,p}(0)=1$.
\end{lemma}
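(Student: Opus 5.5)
We prove the two inequalities separately, working at the block saddle $\gamma=\sqrt{\Lambda_{p,k}/w}$. Since $\Lambda_{p,k}\asymp N$ uniformly under \eqref{asym:shell}, this $\gamma$ lies in $[c\sqrt N,C\sqrt N]$ for $w$ in a compact subinterval of $(0,\infty)$. Small $w$ needs separate care, discussed below. The relation $a_{N,p}(0)=1$ is trivial: the only configuration of weight zero is the empty one.

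\emph{Upper bound.} Apply Lemma~\ref{asym:count-inequality} to the $p$-block with $\mathcal B$ equal to the whole block. This gives
\[
\log a_{N,p}(w)\le \gamma w+F_{N,p}(\gamma)\quad\text{for every }\gamma>0.
\]
By Lemma~\ref{asym:block-free}, the right side is $\gamma w+\Lambda_{p,k}/\gamma+O(\log N)$ for $\gamma$ of order $\sqrt N$. Optimizing at $\gamma=\sqrt{\Lambda_{p,k}/w}$ gives $2\sqrt{\Lambda_{p,k}w}+O(\log N)$.

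For $w$ near zero, this $\gamma$ leaves the admissible range. We handle this with a monotonicity argument. Choose $\gamma=C\sqrt N$ with $C$ large; then $\gamma w+\Lambda/\gamma\le 2\sqrt{\Lambda w}+\Lambda/\gamma$. For $w\le \delta$ we have $2\sqrt{\Lambda w}=O(\sqrt{\delta N})$, so both sides of \eqref{asym:block-coefficient-formula} are $O(\sqrt{\delta N})$. Since $w\ge 1/L_K$ for $w\in L_k^{-1}\mathbb Z$ positive, the range $w\in[1/L_K,M]$ is compact in $(0,\infty)$ and bounded below by a constant depending only on $K$. So the saddle $\gamma$ stays in a fixed range $[c\sqrt N,C\sqrt N]$ with $c,C$ depending on $K,M$, and no small-$w$ issue arises.

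\emph{Lower bound.} Fix $\gamma=\sqrt{\Lambda_{p,k}/w}$. By Lemma~\ref{asym:block-free}, $\mathbb E_\gamma V_p=w+O(N^{-1/2})$ and $\operatorname{Var}_\gamma V_p\asymp N^{-1/2}$. Take the event $\mathcal B=\{|V_p-w|\le N^{-1/8}\}$. Chebyshev gives $\mathbb P(\mathcal B)\ge1/2$ for large $N$. The second part of Lemma~\ref{asym:count-inequality} then yields at least
\[
\exp\{2\sqrt{\Lambda w}-\gamma N^{-1/8}+O(\log N)\}=\exp\{2\sqrt{\Lambda w}-O(N^{3/8})\}
\]
configurations with weight within $N^{-1/8}$ of $w$. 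These weights lie in $(k!p)^{-1}\mathbb Z$ or finer, so the configurations fall into at most polynomially many (in $N$) weight classes. Some weight $w'$ in the window therefore has $\exp\{2\sqrt{\Lambda w}-O(N^{3/8})\}$ configurations.

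\emph{Correcting $w'$ to exactly $w$: the main obstacle.} We must transfer the count from $w'$ to $w$ by an injection costing only $e^{O(N^{3/8}+\log N)}$.

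First restrict $\mathcal B$ further, at the same cost, to configurations in which the $k$ unit fractions $1/(jp)$ have multiplicity zero. Lemma~\ref{asym:block-free} allows this, since its estimates hold with these parts deleted.

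Next, rather than fixing a single $w'$, pick configurations of the reduced block with weight $w''\le w-\epsilon$, where $\epsilon\asymp N^{-1/8}$. To do this, run the argument at target $w-2N^{-1/8}$; the loss is $\sqrt{\Lambda}\,N^{-1/8}/\sqrt w=O(N^{3/8})$. The deficit $w-w''$ lies in $(k!p)^{-1}\mathbb Z$, is nonnegative, and is at most $3N^{-1/8}$ (times a constant).

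Now fill the deficit exactly using the deleted unit fractions $1/(jp)$. Write $w-w''=d/(L_kp)$ with $d$ a nonnegative integer. This requires $w\in L_k^{-1}\mathbb Z$ and $w''\in (L_kp)^{-1}\mathbb Z$, which holds because block denominators are $jp$ with $j\le k$. The deficit is then $d\cdot(1/(L_kp))$, with $d=O(pN^{-1/8})$ large. By Lemma~\ref{asym:local-crt}, $d$ is a nonnegative combination of the $L_k/j$, so the deficit is a sum of parts $1/(jp)$.

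The resulting map is injective: the $1/(jp)$ multiplicities were zero, so $w''$ and the residual configuration can be recovered by deleting all unit fractions $1/(jp)$. Each $w''$ class maps into weight exactly $w$, giving the lower bound. The subtle point is this congruence/representability step. Checking that the fillers respect the lattice $L_k^{-1}\mathbb Z$ for $w$ is where care is needed; the rest is saddle-point bookkeeping with error $O(N^{3/8}+\log N)$.
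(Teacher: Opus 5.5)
Your final argument is correct and is essentially the paper's proof. The upper bound is the same: $\gamma=\sqrt{\Lambda_{p,k}/w}$, with $w\ge 1/L_k$ keeping the saddle of order $\sqrt N$. The lower bound follows the same steps: delete the parts $1/(jp)$, take a Chebyshev window of width $\asymp N^{-1/8}$ just below $w$, fill the integer gap $d=pL_k(w-V_p^-)\ge pL_kN^{-1/8}$ exactly via Lemma~\ref{asym:local-crt}, and get injectivity by deleting those parts again. The only differences are cosmetic: the paper solves $\mathbb E_\gamma V_p^-=w-\tfrac32\delta$ exactly, whereas you use the explicit saddle for $w-2\delta$ and absorb the $O(N^{-1/2})$ mean error into the window; and the abandoned small-$w$ and pigeonhole detours in your write-up can simply be deleted.
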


\begin{proof}
For the upper bound, the coefficient inequality gives
$a_{N,p}(w)\leq\exp(F_{N,p}(\gamma)+\gamma w)$ for every $\gamma>0$.
Taking $\gamma=\sqrt{\Lambda_{p,k}/w}$ and applying
Lemma~\ref{asym:block-free} gives
$\log a_{N,p}(w)\leq2\sqrt{\Lambda_{p,k}w}+O_{K,M}(\log N)$ whenever the
coefficient is positive.  The saddle lies in a fixed positive compact
multiple of $\sqrt N$, because $\Lambda_{p,k}/N$ is bounded above and below and
$w\geq1/ L_k$.

For the lower bound, exclude the $k$ unit fractions in
Lemma~\ref{asym:block-free}, and denote the remaining weight by $V_p^-$ and
free energy by $F^-_{N,p}$.  Put $\delta=N^{-1/8}$, and choose $\gamma>0$
so that
\[
 \mathbb E_\gamma V_p^-=w-\tfrac32\delta.
\]
This choice exists uniquely for large $N$: the remaining alphabet contains
$2/p$, so its mean decreases continuously and strictly from infinity to
zero.  The mean estimate in Lemma~\ref{asym:block-free} brackets this
$\gamma$ between positive constants times $\sqrt N$, uniformly in the
finitely many possible positive $w\leq M$.  Its variance estimate and
Chebyshev's inequality imply
\[
 \mathbb P_\gamma(w-2\delta\leq V_p^-\leq w-\delta)
       =1-O_{K,M}(N^{-1/4}).
\]
If $\mathcal B$ is the finite set of configurations in this interval, then
\[
 |\mathcal B|\geq
 e^{F^-_{N,p}(\gamma)+\gamma(w-2\delta)}
       (1-O_{K,M}(N^{-1/4})).
\]
Since $\gamma w+\Lambda_{p,k}/\gamma\geq2\sqrt{\Lambda_{p,k}w}$, this proves
\[
 \log|\mathcal B|\geq2\sqrt{\Lambda_{p,k}w}
                 -O_{K,M}(N^{3/8}+\log N).
\]

Each $V_p^-$ lies in $(p L_k)^{-1}\mathbb Z$. For a configuration in
$\mathcal B$, the integer $d=p L_k(w-V_p^-)$ is at least
$p L_k\delta\geq L_k\pi_{\mathrm{pr}}(k)$ when $N$ is sufficiently large.
Lemma~\ref{asym:local-crt} represents this gap using the excluded parts
$1/(jp)$.  Use the deterministic residues and remaining multiple in that
lemma to choose the representation.  The resulting configuration has
weight exactly $w$.  Deleting the excluded unit fractions recovers the
original configuration, so this map is injective.  The lower bound and
positivity follow.  Weight zero has only the empty configuration.
\end{proof}

The width $N^{-1/8}$ exceeds the block standard deviation
$O_K(N^{-1/4})$ and costs $O(N^{3/8})$ in the exponent.

For $u,t>0$, put
\begin{equation}\label{asym:G}
 G_k(u,t)=\min_{w\in L_k^{-1}\mathbb Z_{\geq0}}
 \left(uw+\frac{\pi^2\Phi_k t}{6u}
             -2\sqrt{\frac{\pi^2\Phi_k tw}{6}}\right).
\end{equation}
The expression minimized is
$u(\sqrt w-\sqrt{\pi^2\Phi_k t/6}/u)^2$, and is therefore nonnegative.

\begin{lemma}\label{asym:block-probability}
Let $E_p=\{V_p\in L_k^{-1}\mathbb Z\}$ in shell
\eqref{asym:shell}.  For fixed $K$ and $0<c<C<\infty$, uniformly for
$c\leq u\leq C$ and $\beta=u\sqrt N$,
\begin{align}
 -\log\mathbb P_\beta(E_p)
 &=\sqrt N\,G_k(u,p/N)
       +O_{K,c,C}(N^{3/8}+\log N),\label{asym:block-rate}\\
 \mathbb E_\beta(V_p^2\mid E_p)&=O_{K,c,C}(1).
       \label{asym:block-moment}
\end{align}
\end{lemma}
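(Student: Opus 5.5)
Under $\mathbb P_\beta$ the $p$-block multiplicities are independent, so $V_p$ has law $\mathbb P_\beta(V_p=w)=a_{N,p}(w)e^{-\beta w-F_{N,p}(\beta)}$. Summing over the allowed lattice gives
\[
 \mathbb P_\beta(E_p)=e^{-F_{N,p}(\beta)}\sum_{w\in L_k^{-1}\mathbb Z_{\ge0}}a_{N,p}(w)e^{-\beta w}.
\]
The plan is to evaluate this sum by its largest term, using Lemma~\ref{asym:block-free} for $F_{N,p}$ and Lemma~\ref{asym:block-coefficient} for the coefficients.

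With $\beta=u\sqrt N$, Lemma~\ref{asym:block-free} gives $F_{N,p}(\beta)=\Lambda_{p,k}/(u\sqrt N)+O(\log N)$. For $0<w\le M$, Lemma~\ref{asym:block-coefficient} gives $\log a_{N,p}(w)=2\sqrt{\Lambda_{p,k}w}+O(N^{3/8}+\log N)$. Since $\Lambda_{p,k}=N\cdot\pi^2\Phi_k(p/N)/6$, the exponent of the $w$-term becomes
\[
 -\sqrt N\left(uw+\frac{\pi^2\Phi_k t}{6u}-2\sqrt{\frac{\pi^2\Phi_k t w}{6}}\right),\qquad t=p/N,
\]
up to the stated error. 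The term $w=0$ is handled by $a_{N,p}(0)=1$ and gives exactly $-F_{N,p}(\beta)$, which matches the formula at $w=0$. The expression minimized in \eqref{asym:G} is $u(\sqrt w-\sqrt{\pi^2\Phi_kt/6}/u)^2$, and $t\in(1/(k+1),1/k]$ is bounded, so the minimizing $w$ lies in a bounded set $[0,M_0]$ with $M_0=M_0(K,c,C)$. That set contains only $O_{K}(1)$ lattice points. The maximal term therefore gives the lower bound $\mathbb P_\beta(E_p)\ge\exp\{-\sqrt N G_k(u,t)-O(N^{3/8}+\log N)\}$.

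For the upper bound, split the sum at $w=M$, with $M$ a large constant depending on $c,C,K$. The terms with $w\le M$ are $O_{K,M}(1)$ in number, and each is bounded as above. The terms with $w>M$ form the tail $\mathbb P_\beta(V_p>M)$. To control it, use the Chernoff bound $a_{N,p}(w)e^{-\beta w}\le e^{F_{N,p}(\beta/2)-\beta w/2}$, which follows from the coefficient inequality of Lemma~\ref{asym:count-inequality} applied at $\gamma=\beta/2$. Summing over $w\in L_k^{-1}\mathbb Z$ with $w>M$ gives
\[
 \mathbb P_\beta(V_p>M)\le O_K(\sqrt N)\exp\Bigl\{\sqrt N\Bigl(\tfrac{2\pi^2\Phi_kt}{6u}-\tfrac{\pi^2\Phi_kt}{6u}-\tfrac{uM}{2}\Bigr)+O(\log N)\Bigr\}.
\]
For $M$ large this is at most $e^{-\sqrt N(G_{\max}+1)}$, where $G_{\max}$ bounds $G_k$ uniformly (for instance by its value at $w=0$). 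So the tail is negligible compared with the main term, which proves \eqref{asym:block-rate}.

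For \eqref{asym:block-moment}, write
\[
 \mathbb E_\beta(V_p^2\mid E_p)\le M^2+\mathbb E_\beta(V_p^2\mathbf 1_{V_p>M})/\mathbb P_\beta(E_p).
\]
The Chernoff tail with the extra weight $w^2$ is still $e^{-\sqrt N(G_{\max}+1)}$ times a polynomial factor. The lower bound on $\mathbb P_\beta(E_p)$ then makes the second term $o(1)$.

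The main obstacle is uniformity. The minimizing $w$ and the lattice $L_k^{-1}\mathbb Z$ have to be handled uniformly over $u\in[c,C]$ and over all $t$ in the shell, and the error terms must not depend on $w$. Restricting to the bounded window $w\le M$, where Lemma~\ref{asym:block-coefficient} is uniform, together with the Chernoff tail handles this.
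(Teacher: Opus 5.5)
Your proposal is correct and follows essentially the paper's route. You write $\mathbb P_\beta(E_p)=e^{-F_{N,p}(\beta)}\sum_{w}a_{N,p}(w)e^{-\beta w}$, truncate to a fixed window $w\le M$ via the Chernoff bound at $\beta/2$, and evaluate the boundedly many remaining terms with Lemma~\ref{asym:block-coefficient}, treating $w=0$ exactly; the second moment comes from the same tail. The paper divides that tail by the cruder empty-block bound $\mathbb P_\beta(E_p)\ge e^{-F_{N,p}(\beta)}$ rather than by your main-term lower bound, so its moment estimate does not need the coefficient asymptotics; your $O_K(\sqrt N)$ factor is harmless, and is $O_K(1)$ when summing only over $L_k^{-1}\mathbb Z$.
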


\begin{proof}
The empty block gives $\mathbb P_\beta(E_p)\geq e^{-F_{N,p}(\beta)}$.
Chernoff's inequality gives, for $x\geq0$,
\begin{equation}\label{asym:conditional-tail}
 \mathbb P_\beta(V_p>x\mid E_p)
 \leq\exp\left(F_{N,p}(\beta/2)-\frac{\beta x}{2}\right).
\end{equation}
Indeed, the unconditioned tail is at most
$\exp(F_{N,p}(\beta/2)-F_{N,p}(\beta)-\beta x/2)$, and division by the
empty-block lower bound gives \eqref{asym:conditional-tail}.
By Lemma~\ref{asym:block-free}, $F_{N,p}(\beta/2)\leq C_1\sqrt N$
for a fixed $C_1$.  For $x\geq4C_1/c$ the right side is at most
$e^{-c\sqrt N x/4}$.  Integration of
$2x\mathbb P_\beta(V_p>x\mid E_p)$ proves
\eqref{asym:block-moment}.

Choose a fixed $M$ so large that
$F_{N,p}(\beta/2)-\beta M/2\leq-\sqrt N$ for all sufficiently large
$N$ in the stated range.  Then
\[
 \mathbb P_\beta(V_p>M)
 \leq e^{-\sqrt N}\mathbb P_\beta(V_p=0).
\]
The lattice sum defining $\mathbb P_\beta(E_p)$ can consequently be
restricted to $0\leq w\leq M$ with a relative error $O(e^{-\sqrt N})$.
For fixed $K$ it has a bounded number of terms.  On applying
Lemma~\ref{asym:block-coefficient} to the positive terms and treating zero
exactly, the logarithm of this sum is
\[
 -\frac{\Lambda_{p,k}}{\beta}
 +\max_{\substack{w\in L_k^{-1}\mathbb Z_{\geq0}\\w\leq M}}
       (2\sqrt{\Lambda_{p,k}w}-\beta w)
 +O_{K,c,C}(N^{3/8}+\log N).
\]
Enlarge $M$, if necessary, so that every minimizer in \eqref{asym:G}
lies below $M$ for $k\leq K$, $c\leq u\leq C$, and
$1/(k+1)\leq t\leq1/k$.  Such an enlargement is possible since the
linear term $uw$ dominates the square-root term uniformly as
$w\to\infty$.  Substituting $\Lambda_{p,k}=N\pi^2\Phi_k(p/N)/6$ proves
\eqref{asym:block-rate}.
\end{proof}

\subsection{Summing the prime conditions}

Define
\[
 J_K(u)=\sum_{k=1}^K\int_{1/(k+1)}^{1/k}G_k(u,t)\,dt,
 \qquad
 \mathcal E_{K,N}=\bigcap_{N/(K+1)<p\le N}E_p.
\]
The prime $p$ in this intersection belongs to its unique shell
$k=\lfloor N/p\rfloor\leq K$.  For fixed $K$ and sufficiently large
$N$, two such primes cannot divide the same denominator at most $N$.
Their blocks, and hence the events $E_p$, are independent.

\begin{lemma}\label{asym:prime-sum}
Let
\[
 \varepsilon_K(N)=
 \sup_{N/(K+1)\leq x\leq N}|\pi_{\mathrm{pr}}(x)-\operatorname{li}(x)|,
 \qquad \operatorname{li}(x)=\int_2^x\frac{dt}{\log t}.
\]
For fixed $K$ and $0<c<C<\infty$, uniformly for $c\leq u\leq C$,
\begin{align}
 -\log\mathbb P_{u\sqrt N}(\mathcal E_{K,N})
 &=\frac{N^{3/2}}{\log N}J_K(u)
       +O_{K,c,C}\left(
          \frac{N^{3/2}}{(\log N)^2}
          +\sqrt N\,\varepsilon_K(N)
          +\frac{N^{11/8}}{\log N}\right).
       \label{asym:prime-sum-error}
\end{align}
Moreover $\varepsilon_K(N)=o_K(N/\log N)$.
\end{lemma}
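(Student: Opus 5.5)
By independence of the blocks, $-\log\mathbb P_{u\sqrt N}(\mathcal E_{K,N})=\sum_{N/(K+1)<p\le N}-\log\mathbb P_{u\sqrt N}(E_p)$. Lemma~\ref{asym:block-probability} then turns each summand into $\sqrt N\,G_k(u,p/N)+O_{K,c,C}(N^{3/8}+\log N)$, uniformly in $u\in[c,C]$. There are $O_K(N/\log N)$ primes in the range. Summing the block errors therefore costs $O(N^{11/8}/\log N+N)$, and the term $N$ is absorbed by $N^{3/2}/(\log N)^2$. What remains is to evaluate
\[
\sqrt N\sum_{k=1}^K\sum_{N/(k+1)<p\le N/k}G_k(u,p/N).
\]

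For each $k$ I would write the inner sum as a Stieltjes integral $\int G_k(u,x/N)\,d\pi_{\mathrm{pr}}(x)$ over $(N/(k+1),N/k]$ and replace $d\pi_{\mathrm{pr}}$ by $d\operatorname{li}$. The error is handled by integration by parts, so it needs regularity of $t\mapsto G_k(u,t)$ on $[1/(k+1),1/k]$. $G_k$ is a minimum, over $w$ in a bounded range (the bound from the proof of Lemma~\ref{asym:block-probability}), of finitely many functions, each smooth in $t$ with $t$-derivative bounded uniformly in $u\in[c,C]$. It is therefore Lipschitz in $t$ and bounded, uniformly in $u$, and so has bounded variation.

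Integration by parts bounds the difference by $\bigl(\sup|G_k|+\operatorname{Var}G_k\bigr)\,\varepsilon_K(N)=O_K(\varepsilon_K(N))$. The main term is
\[
\int_{N/(k+1)}^{N/k}\frac{G_k(u,x/N)}{\log x}\,dx=N\int_{1/(k+1)}^{1/k}\frac{G_k(u,t)}{\log N+\log t}\,dt .
\]
Since $\log t=O_K(1)$, this equals $\frac{N}{\log N}\int G_k\,dt+O_K(N/(\log N)^2)$. Multiplying by $\sqrt N$ and summing over $k\le K$ gives $\frac{N^{3/2}}{\log N}J_K(u)+O(N^{3/2}/(\log N)^2)$, together with the terms $\sqrt N\,\varepsilon_K(N)$ and $N^{11/8}/\log N$. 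This is \eqref{asym:prime-sum-error}.

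For the final claim, the prime number theorem in the form $\pi_{\mathrm{pr}}(x)=\operatorname{li}(x)+o(x/\log x)$ gives, for $x\in[N/(K+1),N]$, an error $o(x/\log x)=o_K(N/\log N)$, since $x\asymp_K N$ and $\log x\sim\log N$. Taking the supremum keeps this bound.

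The main obstacle is the uniform regularity of $G_k$ in $t$, which the integration by parts needs. I would first reduce the minimum to finitely many $w$ uniformly for $u\in[c,C]$, using that large $w$ are never minimizers. After that step the rest is bookkeeping.
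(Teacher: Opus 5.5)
Your proposal is correct and follows the paper's proof essentially step for step: independence of the selected blocks and Lemma~\ref{asym:block-probability} reduce the estimate to the prime sum of $G_k(u,p/N)$, which you evaluate by Stieltjes integration by parts against $\operatorname{li}$ using the uniform boundedness and Lipschitz continuity of $G_k$ in $t$, and $\log x=\log N+O_K(1)$ produces the $N^{3/2}/(\log N)^2$ error. The final claim likewise follows, as in the paper, from the prime number theorem in the form $\pi_{\mathrm{pr}}(x)-\operatorname{li}(x)=o(x/\log x)$ with $x\asymp_K N$.
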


\begin{proof}
On each of the finitely many compact rectangles in question, the minimum
in \eqref{asym:G} is a minimum of finitely many continuously
differentiable functions with bounded first derivatives.  Thus $G_k$ is
bounded and Lipschitz there, uniformly in $u$.  Stieltjes integration by
parts therefore gives
\begin{align*}
 \sum_{N/(k+1)<p\leq N/k}G_k(u,p/N)
 &=\int_{N/(k+1)}^{N/k}\frac{G_k(u,x/N)}{\log x}\,dx
       +O_{K,c,C}(\varepsilon_K(N))\\
 &=\frac N{\log N}\int_{1/(k+1)}^{1/k}G_k(u,t)\,dt
       +O_{K,c,C}\left(\frac N{(\log N)^2}
                         +\varepsilon_K(N)\right).
\end{align*}
For the last equality, $\log x=\log N+\log(x/N)$ and $\log(x/N)$ is
bounded for fixed $K$.  There are $O(N/\log N)$ selected primes by the
prime number theorem.  Summing \eqref{asym:block-rate} and using their
independence proves \eqref{asym:prime-sum-error}; the smaller contribution
$O(N)$ from the logarithmic block errors is absorbed in the displayed
bound.  The prime number theorem and
$\operatorname{li}(x)\sim x/\log x$ give
$\pi_{\mathrm{pr}}(x)-\operatorname{li}(x)=o(x/\log x)$, uniformly when
$N/(K+1)\leq x\leq N$ with $K$ fixed.
\end{proof}

\begin{lemma}\label{asym:necessary}
For every configuration with total weight $S$ and every selected prime
$p$ in shell $k$,
\begin{equation}\label{asym:primary-equivalence}
p\mid L_NS
\quad\Longleftrightarrow\quad V_p\in L_k^{-1}\mathbb Z.
\end{equation}
In particular, an integer total implies $\mathcal E_{K,N}$.
\end{lemma}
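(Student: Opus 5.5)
Write $S=V_p+S'$, where $S'$ collects the parts whose denominator is not divisible by $p$. The plan is to compare the $p$-adic valuations of $L_NV_p$ and $L_NS'$.

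First, $L_N\alpha\in\mathbb Z$ for every $\alpha\in C_N$, so $L_NS'$ is an integer. Every denominator $b$ of a part in $S'$ divides $L_N$ and is coprime to $p$. By \eqref{asym:standing-threshold} we have $p^2>N$, so $p$ exactly divides $L_N$. Hence $L_N/b$ is divisible by $p$ for each such $b$, and therefore $p\mid L_NS'$.

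It follows that $p\mid L_NS$ if and only if $p\mid L_NV_p$. The parts of the $p$-block have denominators $jp$ with $1\le j\le k$, so $V_p\in(pL_k)^{-1}\mathbb Z$. Write $V_p=n/(pL_k)$ with $n\in\mathbb Z$. Then
\[
 L_NV_p=\frac{L_N}{pL_k}\,n .
\]
The integer $L_N/(pL_k)$ is coprime to $p$, because $p$ exactly divides $L_N$ and $p>K\ge k$ gives $p\nmid L_k$.

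Consequently $p\mid L_NV_p$ if and only if $p\mid n$, which is equivalent to $V_p\in L_k^{-1}\mathbb Z$. This proves \eqref{asym:primary-equivalence}.

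For the final claim, suppose $S\in\mathbb Z$. Then $L_NS$ is divisible by $L_N$, and hence by every selected prime $p$. By the equivalence, every $E_p$ holds, so $\mathcal E_{K,N}$ holds.

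The only delicate point is justifying that $L_N/(pL_k)$ is an integer coprime to $p$. This uses exactly $p^2>N$ and $p>k$, both of which follow from \eqref{asym:standing-threshold} and \eqref{asym:shell}.
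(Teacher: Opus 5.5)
Your proof is correct and follows the paper's argument essentially step for step. The paper likewise writes $V_p=\mathsf T_p/(pL_k)$, notes that $L_N$ times the contribution of the parts outside the block is divisible by $p$, and concludes from the invertibility of $L_N/(pL_k)$ modulo $p$, which rests on the same facts you isolate: $p^2>N$ and $p>K\ge k$.
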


\begin{proof}
Let $p$ be a selected prime in shell $k$.  Write the contribution of its
block as $V_p=\mathsf T_p/(p L_k)$, where
\[
 \mathsf T_p=\sum_{j=1}^k\sum_{\substack{a\leq N\\(a,jp)=1}}
                a\frac{ L_k}{j}X_{a/(jp)}\in\mathbb Z.
\]
One has $p L_k\mid L_N$ and $p\nmid L_N/(p L_k)$.
Every part outside this block has denominator prime to $p$. Its
contribution after multiplication by $L_N$ is therefore divisible by
$p$. Consequently
\[
L_NS\equiv \frac{L_N}{pL_k}\mathsf T_p\pmod p.
\]
The coefficient of $\mathsf T_p$ is invertible modulo $p$, so $p\mid L_NS$
if and only if $p\mid \mathsf T_p$, or equivalently
$V_p\in L_k^{-1}\mathbb Z$. If $S$ is an integer, then
$p\mid L_NS$ for every selected prime, proving the last assertion.
\end{proof}

\subsection{Completing a bulk configuration}

For each prime $p\leq N$, let $q_p$ be its largest power not exceeding
$N$.  For fixed $K$, reserve the alphabet
\[
 \mathcal C_{K,N}=\{1/2\}\cup
              \{1/q_p:p\leq N/(K+1)\},
 \qquad s_{K,N}=\pi_{\mathrm{pr}}(N/(K+1)).
\]
When $N\geq4$, these reserved parts are distinct.  Use all parts in
$C_N\setminus\mathcal C_{K,N}$ for the bulk.  None of the reserved
parts belongs to a selected large-prime block.

Let $\widehat Z_{K,N}(\beta)$ be the sum of $e^{-\beta S}$ over bulk
configurations satisfying $\mathcal E_{K,N}$, and let
$\widehat{\mathbb P}_\beta$ be the corresponding normalized measure.
If
$\mathcal D_{K,N}(\beta)=\sum_{\alpha\in\mathcal C_{K,N}}f_0(\beta\alpha)$,
independence gives the exact identity
\begin{equation}\label{asym:conditioned-product}
 \log\widehat Z_{K,N}(\beta)
 =F_N(\beta)-\mathcal D_{K,N}(\beta)
       +\log\mathbb P_\beta(\mathcal E_{K,N}).
\end{equation}

\begin{lemma}\label{asym:bulk-moments}
Fix $K$ and $0<c<C<\infty$.  Uniformly for
$c\sqrt N\leq\beta\leq C\sqrt N$,
\begin{align*}
 \mathcal D_{K,N}(\beta)&=O_{K,c,C}(N),\\
 \widehat{\mathbb E}_\beta S
 &=\frac{N^2}{2\beta^2}
       +O_{K,c,C}(N/\log N),\\
 \widehat{\operatorname{Var}}_\beta S
 &=O_{K,c,C}(N/\log N).
\end{align*}
\end{lemma}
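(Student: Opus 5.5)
The proof rests on the independence structure recorded in \eqref{asym:conditioned-product}. Under $\widehat{\mathbb P}_\beta$ the bulk alphabet $C_N\setminus\mathcal C_{K,N}$ splits into two independent pieces. The first is the disjoint union of the selected blocks, one for each prime $N/(K+1)<p\le N$; each is conditioned on its own event $E_p$. The second is the remaining \emph{free} bulk parts, which keep their unconditioned geometric laws under $\mathbb P_\beta$. This uses that the reserved parts lie in no selected block, and that distinct selected primes divide no common denominator (both noted before the statement). Accordingly, under $\widehat{\mathbb P}_\beta$,
\[
 S=\sum_{N/(K+1)<p\le N}V_p+S_{\mathrm{free}},
\]
with all summands independent, each $V_p$ distributed as $\mathbb P_\beta(\,\cdot\mid E_p)$, and $S_{\mathrm{free}}$ distributed as under $\mathbb P_\beta$. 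I will estimate the reserved free energy, then the mean, then the variance.

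\emph{Bound on $\mathcal D_{K,N}$.} Every reserved part satisfies $\alpha\ge1/N$, so $\beta\alpha\ge c/\sqrt N$. The elementary bound $f_0(x)\le\log(1+1/x)$ then gives $f_0(\beta\alpha)=O_c(\log N)$ for each reserved part. There are $s_{K,N}+1=O_K(N/\log N)$ reserved parts by the prime number theorem. Multiplying gives $\mathcal D_{K,N}(\beta)=O_{K,c,C}(N)$.

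\emph{Mean.} Write $\mathbb E_\beta S_{\mathrm{free}}$ as $\mathbb E_\beta S$ minus the unconditioned means of the selected blocks and of the reserved parts, and apply \eqref{asym:global-M} to $\mathbb E_\beta S$.
\begin{itemize}
\item By Lemma~\ref{asym:block-free}, each selected block has unconditioned mean $\Lambda_{p,k}/\beta^2+O(N^{-1/2})=O_{K,c,C}(1)$. There are $O(N/\log N)$ such blocks, contributing $O(N/\log N)$ in total.
\item A reserved part $\alpha$ has mean $f_1(\beta\alpha)/\beta\le1/\beta$. Summed over $O_K(N/\log N)$ reserved parts this is $O(\sqrt N/\log N)$.
\item Each conditioned block contributes $\mathbb E_\beta(V_p\mid E_p)=O(1)$, by Cauchy--Schwarz and \eqref{asym:block-moment}. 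This adds another $O(N/\log N)$.
\end{itemize}
Together these give $\widehat{\mathbb E}_\beta S=N^2/(2\beta^2)+O_{K,c,C}(N/\log N)$. The $O(\sqrt N\log N)$ error in \eqref{asym:global-M} is absorbed.

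\emph{Variance.} By independence the variance is a sum of per-piece variances, which I bound separately.
\begin{itemize}
\item The free part has $\operatorname{Var}_\beta S_{\mathrm{free}}\le\sum_{\alpha\in C_N}\operatorname{Var}_\beta(\alpha X_\alpha)=\operatorname{Var}_\beta S=O(\sqrt N)$, by \eqref{asym:global-V}.
\item Each conditioned block has variance at most $\mathbb E_\beta(V_p^2\mid E_p)=O_{K,c,C}(1)$, by \eqref{asym:block-moment}. Summing over $O(N/\log N)$ blocks gives $O(N/\log N)$.
\end{itemize}
The only step with real content is the uniform conditioned second-moment bound \eqref{asym:block-moment}; it has already been proved, so the remaining work is bookkeeping. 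The one point needing care there is that the unconditioned block means must be computed for the full blocks, which contain no reserved parts, so that the subtraction is exact.
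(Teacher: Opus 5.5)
Your proposal is correct and follows essentially the same route as the paper. Both arguments bound each reserved part's contribution to $\mathcal D_{K,N}$ by $O(\log N)$, factor $\widehat{\mathbb P}_\beta$ into independent conditioned prime blocks and free geometric parts, take block means from Lemma~\ref{asym:block-free} and conditioned moments from \eqref{asym:block-moment}, and dominate the free-part variance by \eqref{asym:global-V}; your explicit subtract-and-add bookkeeping for the mean is a more detailed version of the paper's remark that conditioning changes the total mean by $O_K(N/\log N)$.
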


\begin{proof}
Since $1/(1-e^{-x})\leq1+x^{-1}$, each reserved part contributes at
most $\log(1+N/\beta)=O_{c,C}(\log N)$ to $\mathcal D_{K,N}$.  There are
$O_K(N/\log N)$ such parts.  Deleting them changes the unconditioned
mean by at most $(s_{K,N}+1)/\beta=O_K(\sqrt N/\log N)$, since each
geometric part contributes at most $1/\beta$ to the mean.

The selected prime blocks remain independent after conditioning, and the
unselected part of the bulk remains independent of them.  Each selected
block has unconditioned mean $O_K(1)$ by
Lemma~\ref{asym:block-free}, and conditioned first and second moments
$O_K(1)$ by Lemma~\ref{asym:block-probability}.  Thus conditioning changes
the total mean by $O_K(N/\log N)$.  The variance of the unselected part
is at most the full unconditioned variance, which is $O(\sqrt N)$ by
Lemma~\ref{asym:global-free}.  Adding the conditional variances of the
$O(N/\log N)$ selected blocks proves the variance bound.  Finally,
$\sqrt N\log N=O(N/\log N)$ for large $N$, so the mean estimate follows
from \eqref{asym:global-M}.
\end{proof}

\begin{lemma}\label{asym:bulk-saddle}
Fix $K$ and a compact interval $I\subset(0,\infty)$, and put
$m=\lfloor\rho N\rfloor$ for $\rho\in I$.  For all sufficiently large
$N$, there is a unique $\beta>0$ such that
\begin{equation}\label{asym:bulk-target}
 \widehat{\mathbb E}_\beta S
       =m-s_{K,N}-\tfrac12N^{2/3}.
\end{equation}
Uniformly for $\rho\in I$, this parameter satisfies
\begin{equation}\label{asym:bulk-saddle-estimate}
 \frac\beta{\sqrt N}
       =\frac1{\sqrt{2\rho}}+O_{K,I}(1/\log N),
\end{equation}
and
\begin{equation}\label{asym:bulk-concentration}
 \widehat{\mathbb P}_\beta
 \bigl(m-s_{K,N}-N^{2/3}\leq S\leq m-s_{K,N}\bigr)
       =1-O_{K,I}(N^{-1/3}/\log N).
\end{equation}
\end{lemma}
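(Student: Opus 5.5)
Existence and uniqueness come first, by monotonicity. Under $\widehat{\mathbb P}_\beta$ the bulk configurations are weighted by $e^{-\beta S}$ restricted to the event $\mathcal E_{K,N}$, so $\beta\mapsto\widehat{\mathbb E}_\beta S$ is the negative logarithmic derivative of $\widehat Z_{K,N}(\beta)$. Its derivative is $-\widehat{\operatorname{Var}}_\beta S$, which is strictly negative because $S$ is nonconstant on the conditioned set. For instance, the part $1$ lies outside every selected block and outside $\mathcal C_{K,N}$, and its multiplicity is free. Hence the conditioned mean decreases continuously and strictly.

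As $\beta\to\infty$ the measure concentrates on the empty configuration, so the mean tends to $0$. As $\beta\to0^+$ the mean tends to infinity, since the unconstrained part-$1$ multiplicity alone contributes $1/(e^\beta-1)$ and the conditioning is independent of it. The target $m-s_{K,N}-\tfrac12N^{2/3}=\rho N-O_K(N/\log N)$ is positive for large $N$, so a unique solution exists.

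For \eqref{asym:bulk-saddle-estimate}, I would apply Lemma~\ref{asym:bulk-moments} on a window such as $c\sqrt N\le\beta\le C\sqrt N$, with $c<(2\max I)^{-1/2}$ and $C>(2\min I)^{-1/2}$. On this window,
\[
\widehat{\mathbb E}_\beta S=\frac{N^2}{2\beta^2}+O_{K,c,C}(N/\log N).
\]
At $\beta=c\sqrt N$ this exceeds the target, and at $\beta=C\sqrt N$ it falls below it, for large $N$ uniformly in $\rho\in I$. By monotonicity the solution therefore lies in the window. With $u=\beta/\sqrt N$ the equation becomes
\[
\frac{N}{2u^2}=\rho N+O_{K,I}(N/\log N),
\]
where the target also absorbs $s_{K,N}=O_K(N/\log N)$, $\tfrac12N^{2/3}$, and the floor. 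So $1/(2u^2)=\rho+O(1/\log N)$. Since $\rho$ is bounded away from $0$ and $\infty$, inverting this smooth relation gives $u=(2\rho)^{-1/2}+O_{K,I}(1/\log N)$.

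For \eqref{asym:bulk-concentration}, I would use Chebyshev's inequality about the mean $m-s_{K,N}-\tfrac12N^{2/3}$ with deviation $\tfrac12N^{2/3}$. Lemma~\ref{asym:bulk-moments} gives variance $O_{K,I}(N/\log N)$ at this $\beta$, so the exceptional probability is at most
\[
\frac{4\,O(N/\log N)}{N^{4/3}}=O_{K,I}\!\left(\frac{N^{-1/3}}{\log N}\right).
\]

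The only delicate point is making sure the monotonicity and limit arguments refer to the conditioned measure, with the reserved parts removed. I would handle this by exhibiting an unconstrained free part, which is why I use the part $1$. I would also check that the constants in Lemma~\ref{asym:bulk-moments} are uniform over the chosen window, which the lemma provides.
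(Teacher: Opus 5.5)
Your overall argument matches the paper's proof step for step: strict monotonicity from positive conditioned variance, divergence as $\beta\downarrow0$ from a free coordinate, bracketing the solution in $[c\sqrt N,C\sqrt N]$ via Lemma~\ref{asym:bulk-moments}, inversion, and Chebyshev with half-width $\tfrac12N^{2/3}$. Those steps are sound.

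The one concrete error is your witness: the part $1$ does not exist in this model. By \eqref{eq:alphabet}, every fraction in $C_N$ has denominator $b\ge2$, so integer parts are excluded; they appear only in $R_N^{+}$ of Corollary~\ref{asym:integer-parts}. Consequently, neither your justification of strict monotonicity nor your divergence claim via $1/(e^\beta-1)$ refers to an actual coordinate of the bulk measure. As written, those two steps are unsupported.

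The repair is to use $2/3$, as the paper does. It is not a unit fraction, so it is not in $\mathcal C_{K,N}$. Its denominator $3$ is divisible by no selected prime, because $p>N/(K+1)>3$ under \eqref{asym:standing-threshold}. This is exactly why the linear bound $N>3(K+1)$ appears there.

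With this choice, the multiplicity of $2/3$ remains an independent geometric variable under $\widehat{\mathbb P}_\beta$. It contributes $(2/3)/(e^{2\beta/3}-1)\to\infty$ to the mean as $\beta\downarrow0$. Both the empty configuration and the single part $2/3$ satisfy $\mathcal E_{K,N}$, so $S$ is nonconstant and the variance is positive.

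You should also justify differentiating the conditioned sums term by term, as the paper briefly does. Locally in $\beta>0$ they are dominated by the unconditioned differentiated product. With these two changes your proof is complete.
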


\begin{proof}
The unconditioned differentiated product bounds the conditioned sums
locally for $\beta>0$, so the mean is continuous with derivative
equal to the negative variance. The unrestricted part $2/3$ makes
this variance positive and the mean divergent as $\beta\downarrow0$;
dominated convergence and the empty configuration give limit zero as
$\beta\to\infty$, proving existence and uniqueness for the positive
target in \eqref{asym:bulk-target}.

Choose $0<c<C$ so that $1/(2c^2)>\sup I$ and
$1/(2C^2)<\inf I$.  Lemma~\ref{asym:bulk-moments} brackets the solution
between $c\sqrt N$ and $C\sqrt N$.  Applying its mean estimate at the
solution and dividing \eqref{asym:bulk-target} by $N$ gives
\[
 \frac{1}{2(\beta/\sqrt N)^2}
       =\rho+O_{K,I}(1/\log N).
\]
Here $s_{K,N}=O_K(N/\log N)$ and $N^{2/3}=O(N/\log N)$.
Inversion on this fixed compact interval proves
\eqref{asym:bulk-saddle-estimate}.  The variance bound and Chebyshev's
inequality, with half-width $N^{2/3}/2$, prove
\eqref{asym:bulk-concentration}.
\end{proof}

The window $N^{2/3}$ exceeds the conditional standard deviation
$O_K(\sqrt{N/\log N})$ and costs $O(N^{7/6})=o(N^{3/2}/\log N)$.

\begin{lemma}\label{asym:global-injection}
Let $m\in\mathbb Z_{\ge0}$, with $K,N$ satisfying
\eqref{asym:standing-threshold}. Every bulk configuration satisfying $\mathcal E_{K,N}$ and having weight
in
\[
 [m-s_{K,N}-N^{2/3},\,m-s_{K,N}]
\]
can be mapped injectively to a configuration counted by $R_N(m)$.
\end{lemma}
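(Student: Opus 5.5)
The idea is to complete the bulk configuration using only the reserved parts $\mathcal C_{K,N}$. The unit fractions $1/q_p$ with $p\le N/(K+1)$ will correct the small-prime components of the fractional part of $S$. The event $\mathcal E_{K,N}$, through Lemma~\ref{asym:necessary}, already guarantees that the large-prime components vanish. Copies of $1/2$ then fill the remaining integer gap. Deleting the reserved parts recovers the bulk configuration, which gives injectivity.

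\emph{Arithmetic setup.} By \eqref{asym:standing-threshold}, every selected prime satisfies $p^2>N$, so $q_p=p$ for $p\in(N/(K+1),N]$. Hence
\[
 L_N=\prod_{p\le N/(K+1)}q_p\cdot\prod_{N/(K+1)<p\le N}p,
\]
and the factors are pairwise coprime. Let $x$ be a bulk configuration satisfying $\mathcal E_{K,N}$ with total $S\in L_N^{-1}\mathbb Z$. Lemma~\ref{asym:necessary} gives $p\mid L_NS$ for every selected prime $p$.

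\emph{Residue correction.} For each $p\le N/(K+1)$ I choose the unique $r_p\in\{0,\ldots,q_p-1\}$ with
\[
 r_p\,\frac{L_N}{q_p}\equiv -L_NS\pmod{q_p}.
\]
This is possible because $L_N/q_p$ is invertible modulo $q_p$. Put $B=\sum_p r_p/q_p$. I then check that $L_N(S+B)\equiv0\pmod{L_N}$ by the Chinese remainder theorem, componentwise:
\begin{itemize}[label={}]
 \item modulo $q_p$ for small $p$, only the term $r_pL_N/q_p$ survives among the corrections, so the congruence holds by the choice of $r_p$;
 \item modulo a selected $p$, every $L_N/q_{p'}$ is divisible by $p$, and $p\mid L_NS$.
\end{itemize}
Hence $S+B\in\mathbb Z$. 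Each summand satisfies $r_p/q_p<1$, so $0\le B<s_{K,N}$.

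\emph{Filling the gap.} Since $S\le m-s_{K,N}$, the number $g=m-S-B$ is a positive integer. I add $2g$ copies of $1/2$ together with $r_p$ copies of $1/q_p$. The result has total exactly $m$ and is counted by $R_N(m)$.

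\emph{Distinctness and injectivity.} The parts $1/2$ and $1/q_p$ are distinct elements of $C_N$, because $q_2\ge4$ when $N>4$. None of them lies in the bulk alphabet or in a selected prime block. Removing all reserved parts therefore returns $x$, so the map is injective.

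The only delicate step is the residue correction: one must verify that the reserved unit fractions suffice to clear every prime-power component of the fractional part. This is exactly where the square threshold $p^2>N$ and Lemma~\ref{asym:necessary} are used. The window $[m-s_{K,N}-N^{2/3},\,m-s_{K,N}]$ only needs its upper endpoint, which guarantees room for $B$; the lower endpoint plays no role in the injection.
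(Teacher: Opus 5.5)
Your proposal is correct and is essentially the paper's proof. Both choose $r_p$ copies of $1/q_p$ for $p\le N/(K+1)$ to clear the small-prime coordinates, use $\mathcal E_{K,N}$ with Lemma~\ref{asym:necessary} for the selected primes, pad with $2(m-S-B)$ copies of $1/2$, and get injectivity by deleting the reserved parts. Your explicit componentwise CRT check and the observation $q_p=p$ for selected primes only spell out steps the paper states briefly.
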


\begin{proof}
Let $s$ be its weight, so $L_Ns\in\mathbb Z$. For each prime
$p\leq N/(K+1)$ choose the unique integer $r_p$ with $0\leq r_p<q_p$
such that
\[
 r_p\frac{L_N}{q_p}\equiv-L_Ns\pmod{q_p}.
\]
The coefficient $L_N/q_p$ is invertible modulo $q_p$.
Adding $r_p$ copies of $1/q_p$ therefore corrects the $p$-primary
coordinate, without changing any other primary coordinate.
For selected primes the necessary coordinate already vanishes by the
condition $\mathcal E_{K,N}$ and the congruence calculation in
Lemma~\ref{asym:necessary}.  Since the $q_p$ are pairwise coprime and
their product is $L_N$, the corrected weight
\[
 s'=s+\sum_{p\leq N/(K+1)}\frac{r_p}{q_p}
\]
is an integer. Since each $r_p\le q_p-1$, the correction is
strictly less than $s_{K,N}$, so $s'\leq m$.
Append $2(m-s')$ copies of $1/2$. All appended parts are reserved, and
none was present in the bulk.  Deleting all reserved parts from the
completed configuration recovers the bulk configuration.  Hence the
map is injective.
\end{proof}

\subsection{The limiting constant and the main proof}

\begin{lemma}\label{asym:kappa-convergence}
The series $J(u)=\lim_{K\to\infty}J_K(u)$ converges uniformly on compact
subsets of $(0,\infty)$.  For every such compact set $U$ there is a
constant $C_U$ such that
\begin{equation}\label{asym:kappa-tail}
 0\leq J(u)-J_K(u)
       \leq C_U\sum_{k>K}\frac1{k^3 L_k^2}
       \qquad(u\in U).
\end{equation}
The function $J$ is continuous and strictly positive.
\end{lemma}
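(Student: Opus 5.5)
The plan is to bound each summand of $J_K$ pointwise by a rounding argument in the variable $\sqrt w$, and then deduce uniform convergence, continuity and positivity from that bound. Write $x_k(t)=\sqrt{\pi^2\Phi_k t/6}$ and $y=x_k(t)/u$. As noted after \eqref{asym:G},
\[
 G_k(u,t)=u\min_{w\in L_k^{-1}\mathbb Z_{\ge0}}\bigl(\sqrt w-y\bigr)^2\ge0 .
\]
Hence every $J_K(u)$ is nondecreasing in $K$, and $J(u)-J_K(u)\ge0$ is automatic once the limit exists. This gives the lower inequality in \eqref{asym:kappa-tail}.

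\textbf{Upper bound.} Take $w$ to be a point of $L_k^{-1}\mathbb Z_{\ge0}$ nearest to $w_0=y^2$, so $|w-w_0|\le 1/(2L_k)$. Then
\[
 (\sqrt w-y)^2=\frac{(w-w_0)^2}{(\sqrt w+y)^2}\le\frac{1}{4L_k^2y^2},
\]
which gives
\[
 G_k(u,t)\le\frac{u^3}{4L_k^2x_k(t)^2}=\frac{3u^3}{2\pi^2L_k^2\Phi_k t}.
\]
Next I need the elementary bound $\Phi_k\ge c_0k^2$ for all $k\ge1$. For large $k$ it follows from \eqref{asym:phi-sum}, and for the finitely many small $k$ it holds because $\Phi_k\ge1$. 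Since $t\ge1/(k+1)$, the integrand is $O(u^3/(kL_k^2))$. The interval $[1/(k+1),1/k]$ has length $1/(k(k+1))\le 1/k^2$. Each summand of the series is therefore at most $C_Uk^{-3}L_k^{-2}$ with $C_U=O(\sup_U u^3)$. This series converges very fast, which proves uniform convergence on $U$ and the upper inequality in \eqref{asym:kappa-tail}.

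\textbf{Continuity.} For fixed $k$ and $u$ in a compact set $U$, the target $y$ stays bounded. So the minimum in \eqref{asym:G} may be taken over a fixed finite set of $w$, as in the proof of Lemma~\ref{asym:block-probability}. This makes $G_k$ a minimum of finitely many jointly continuous functions, hence continuous and bounded on $U\times[1/(k+1),1/k]$. Each integral is then continuous in $u$, so each $J_K$ is continuous, and the uniform limit $J$ is continuous.

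\textbf{Strict positivity.} This is the only step needing care, because $G_k$ vanishes on a set of $t$. Since $J\ge J_1$, it suffices to treat $k=1$. There $\Phi_1=1$ and the lattice is $\mathbb Z_{\ge0}$, so the integrand is $u\,\operatorname{dist}(y(t),\{\sqrt n:n\ge0\})^2$ with $y(t)=\pi\sqrt{t/6}/u$. This function is strictly increasing in $t$. It therefore takes each value $\sqrt n$ at most once, and only finitely many $\sqrt n$ lie in its bounded range. The integrand is thus continuous, nonnegative, and positive except at finitely many points of $[1/2,1]$, so $J_1(u)>0$. Consequently $J(u)\ge J_1(u)>0$ for every $u>0$.
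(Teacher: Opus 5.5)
Your proposal is correct and follows the paper's proof essentially step for step: the nearest-lattice-point bound $G_k(u,t)\le 3u^3/(2\pi^2\Phi_k tL_k^2)$ together with $\Phi_k\ge c_0k^2$ gives the tail estimate, the finite-minimum description gives continuity of each $J_K$, and the $k=1$ shell, whose integrand vanishes at only finitely many $t$, gives strict positivity. Your identity $(\sqrt w-y)^2=(w-y^2)^2/(\sqrt w+y)^2$ simply writes out a step that the paper leaves implicit.
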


\begin{proof}
Set $y=\pi^2\Phi_k t/(6u^2)$ and choose the nearest nonnegative lattice
point $w\in L_k^{-1}\mathbb Z_{\geq0}$ to $y$.  Then
$|w-y|\leq1/(2 L_k)$ and
\[
 G_k(u,t)\leq u(\sqrt w-\sqrt y)^2
       \leq\frac{u}{4 L_k^2y}
       =\frac{3u^3}{2\pi^2\Phi_k t L_k^2}.
\]
By \eqref{asym:phi-sum}, $\Phi_k\geq c_0k^2$ for an absolute
$c_0>0$ and all $k\geq1$, after decreasing $c_0$ to cover finitely many
small values.  Integration over $[1/(k+1),1/k]$ gives the bound
$C_U/(k^3 L_k^2)$.  Its sum converges even if $ L_k^2$ is replaced
by $1$.  This proves the uniform tail estimate.  Every $J_K$ is
continuous by the finite-minimum description used in
Lemma~\ref{asym:prime-sum}, so uniform convergence proves continuity.
For $k=1$ the integrand vanishes only where
$\pi^2t/(6u^2)$ is a nonnegative integer.  There are only finitely many
such $t$ in $[1/2,1]$, and the integrand is continuous and positive
elsewhere.  Its integral is positive, proving $J(u)>0$.
\end{proof}

\begin{lemma}\label{asym:kappa-regularity}
The functions $J_K,J,\kappa$ are continuously differentiable on $(0,\infty)$.
\end{lemma}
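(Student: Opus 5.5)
The approach is to rescale the variable $t$ so that the whole $u$-dependence of each shell integral sits in the limits of integration and in an explicit smooth prefactor. The fundamental theorem of calculus then does the work, even though $G_k(u,\cdot)$ itself has corners where the minimizing lattice point switches.

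Fix $k$ and put $A_k=\pi^2\Phi_k/6$ and $g_k(y)=\min_{w\in L_k^{-1}\mathbb Z_{\ge0}}(\sqrt w-\sqrt y)^2$ for $y\ge0$. As noted after \eqref{asym:G},
\[
 G_k(u,t)=u\,g_k(A_kt/u^2).
\]
The function $g_k$ is continuous on $[0,\infty)$: on bounded sets it is a minimum of finitely many continuous functions, since large $w$ are never minimizers there. Substituting $y=A_kt/u^2$ in each shell integral gives
\[
 \int_{1/(k+1)}^{1/k}G_k(u,t)\,dt=\frac{u^3}{A_k}\int_{A_k/((k+1)u^2)}^{A_k/(ku^2)}g_k(y)\,dy .
\]
Because $g_k$ is continuous and the limits are smooth functions of $u>0$, this term is $C^1$, with derivative
\[
 \frac{3u^2}{A_k}\int_{y_-}^{y_+}g_k
 -\frac2k\,g_k(y_+)+\frac2{k+1}\,g_k(y_-),
 \qquad y_\pm=\frac{A_k}{(k+\tfrac12\mp\tfrac12)u^2}.
\]
Summing over $k\le K$ shows that $J_K$ is continuously differentiable.

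For $J$, I would show that the series of these derivatives converges uniformly on each compact $U\subset(0,\infty)$. The nearest-lattice-point bound from the proof of Lemma~\ref{asym:kappa-convergence} gives $g_k(y)\le 1/(4L_k^2y)$. On $[y_-,y_+]$ we have $y\asymp k$ uniformly for $u\in U$, because $\Phi_k\ge c_0k^2$ by \eqref{asym:phi-sum}. Hence:
\begin{itemize}[label={}]
 \item the boundary terms are $O_U(1/(k^2L_k^2))$;
 \item the integral term is at most $(3u^2/A_k)(y_+-y_-)\sup g_k=O_U(k^{-2}\cdot k^{-1}\cdot L_k^{-2}k^{-1})$.
\end{itemize}
Both are summable in $k$. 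Since $J_K\to J$ pointwise by Lemma~\ref{asym:kappa-convergence}, the standard theorem on termwise differentiation gives $J\in C^1(0,\infty)$, with the derivative equal to the termwise sum.

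Finally, compare \eqref{asym:kappa} with \eqref{asym:G}. At $u=(2\rho)^{-1/2}$ one has $A_kt/u^2=\pi^2\rho\Phi_kt/3$, so the integrand of \eqref{asym:kappa} is exactly $g_k(A_kt/u^2)=G_k(u,t)/u$. Therefore
\[
 \kappa(\rho)=\frac1{\sqrt{2\rho}}\cdot\frac{J(u)}{u}=J\bigl((2\rho)^{-1/2}\bigr),
\]
and $\kappa$ is $C^1$ as a composition of $C^1$ maps. The main obstacle is recognizing that one should not differentiate $G_k$ in $u$ pointwise: it fails to be differentiable on a set of $t$ that moves with $u$. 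The substitution sidesteps this, and the only estimate needed afterwards is the uniform summability of the derivative series, which reuses the tail bound already proved.
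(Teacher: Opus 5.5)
Your proof is correct and follows the paper's argument essentially step for step. It uses the same substitution $y=A_kt/u^2$ to move the $u$-dependence into the limits of integration, the same fundamental-theorem-of-calculus formula for the shell derivative, the same nearest-point bound $g_k(y)\le 1/(4L_k^2y)$ for uniform convergence of the derivative series, and the same identification $\kappa(\rho)=J\bigl((2\rho)^{-1/2}\bigr)$. There is one harmless slip: $y_+-y_-=A_k/(u^2k(k+1))\asymp 1$ rather than $k^{-1}$. The integral term is therefore $O_U(k^{-3}L_k^{-2})$, not $O_U(k^{-4}L_k^{-2})$, which is still summable.
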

\begin{proof}
For $\lambda=\pi^2\Phi_k/6$ and
$g(y)=\min_{w\in L_k^{-1}\mathbb Z_{\ge0}}(\sqrt w-\sqrt y)^2$,
\[
 j_k(u):=\int_{1/(k+1)}^{1/k}G_k(u,t)\,dt
 =\frac{u^3}{\lambda}
   \int_{\lambda/((k+1)u^2)}^{\lambda/(ku^2)}g(y)\,dy.
\]
On $0\le y\le M$ every minimizer has $w\le4M$, so $g$ is a
finite minimum locally and is continuous. The fundamental theorem
of calculus gives
\begin{equation}\label{asym:shell-derivative}
 j_k'(u)=\frac{3j_k(u)}u-\frac2u
 \left[\frac1kG_k\left(u,\frac1k\right)
       -\frac1{k+1}G_k\left(u,\frac1{k+1}\right)\right].
\end{equation}
The nearest-point bound $g(y)\le1/(4L_k^2y)$ puts both boundary
terms in $[0,u^3/(4\lambda L_k^2)]$ and gives
$j_k(u)\le u^3\log(1+1/k)/(4\lambda L_k^2)$. Hence
\begin{equation}\label{asym:shell-derivative-bound}
 |j_k'(u)|\le\frac{3u^2}{2\pi^2\Phi_kL_k^2}
                   \left(2+3\log\left(1+\frac1k\right)\right).
\end{equation}
Since $\Phi_k\gg k^2$, the derivative series converges uniformly
on compact positive intervals $U$. Integrating its partial sums and
using Lemma~\ref{asym:kappa-convergence} proves $J'=\sum_k j_k'$, with
\begin{equation}\label{asym:kappa-derivative-tail}
 \sup_{u\in U}|J'(u)-J_K'(u)|
       \le C_U\sum_{k>K}\frac1{k^2L_k^2}.
\end{equation}
The chain rule gives
$\kappa'(\rho)=-(2\rho)^{-3/2}J'((2\rho)^{-1/2})$.
\end{proof}
\begin{proof}[Proof of Theorem~\ref{asym:main}]
Fix $K$ and a compact positive interval $I$.  Put
\[
 T_N=\frac{N^{3/2}}{\log N},\qquad
 u_\rho=(2\rho)^{-1/2},\qquad m=\lfloor\rho N\rfloor.
\]
For the upper bound, Lemmas~\ref{asym:necessary} and
\ref{asym:count-inequality} give, at $\beta=u_\rho\sqrt N$,
\[
 \log R_N(m)\leq F_N(\beta)+\beta m
             +\log\mathbb P_\beta(\mathcal E_{K,N}).
\]
Lemmas~\ref{asym:global-free} and \ref{asym:prime-sum} yield
\begin{equation}\label{asym:upper-fixed-K}
 \log R_N(m)
 \leq\sqrt{2\rho}\,N^{3/2}-T_NJ_K(u_\rho)+o_{K,I}(T_N),
\end{equation}
uniformly for $\rho\in I$.

For the lower bound choose the parameter in
Lemma~\ref{asym:bulk-saddle}, and let $\mathcal B$ be the bulk
configurations in the interval of
Lemma~\ref{asym:global-injection}. Lemma~\ref{asym:count-inequality}
for this interval gives
\[
 |\mathcal B|\geq\widehat Z_{K,N}(\beta)
       e^{\beta(m-s_{K,N}-N^{2/3})}
       \widehat{\mathbb P}_\beta(\mathcal B).
\]
Combining the injection, \eqref{asym:conditioned-product}, and
Lemmas~\ref{asym:global-free}--\ref{asym:bulk-saddle} gives
\begin{align*}
 \log R_N(m)
 &\geq\beta m+\frac{N^2}{2\beta}
       -T_NJ_K(\beta/\sqrt N)
       -\beta s_{K,N}-\beta N^{2/3}+o_{K,I}(T_N).
\end{align*}
The inequality $\beta m+N^2/(2\beta)\geq N\sqrt{2m}$ supplies the main
term without differentiating an asymptotic formula.  Uniformly on $I$,
$N\sqrt{2m}=\sqrt{2\rho}\,N^{3/2}+O_I(\sqrt N)$.
Also $J_K$ is Lipschitz on the fixed saddle interval,
$\beta/\sqrt N=u_\rho+O_{K,I}(1/\log N)$, and
\[
 s_{K,N}=\frac{N}{(K+1)\log N}+o_K(N/\log N).
\]
Since $\beta N^{2/3}=O_I(N^{7/6})=o_I(T_N)$, we obtain
\begin{equation}\label{asym:lower-fixed-K}
 \log R_N(m)
 \geq\sqrt{2\rho}\,N^{3/2}
       -T_N\left(J_K(u_\rho)+\frac{u_\rho}{K+1}\right)
       +o_{K,I}(T_N).
\end{equation}

For precision, all errors after division by $T_N$ are bounded by a
constant depending on $K,I$ times the sum of the following quantities:
\begin{center}
\begin{tabular}{>{\raggedright\arraybackslash}p{0.29\textwidth}>{\raggedright\arraybackslash}p{0.61\textwidth}}
\hline
Normalized bound & Source in the proof\\
\hline
$1/\log N$ & Replacing $\log(Nt)$ by $\log N$, and replacing
the shifted bulk parameter by $u_\rho\sqrt N$ in the prime cost
and the repair cost.\\
$N^{-1/8}$ & Summing the $O_K(N^{3/8})$ block coefficient errors
over $O(N/\log N)$ primes.\\
$(\log N)^3/\sqrt N$ & The global product estimate; this also
absorbs deletion of reserved coordinates and integer rounding.\\
$(\log N)/N^{1/3}$ & The exponential-weight cost of the global
window $N^{2/3}$.\\
$(\log N)\varepsilon_K(N)/N$ & Prime-number-theorem errors,
including the count of primes requiring repair.\\
\hline
\end{tabular}
\end{center}
Every entry tends to zero for fixed $K$. Thus the remainders above
are uniform on the asserted compact density interval.

Let
\[
 \Delta_N(\rho)=
 \frac{\sqrt{2\rho}\,N^{3/2}-\log R_N(\lfloor\rho N\rfloor)}{T_N}.
\]
The two bounds give
\[
 J_K(u_\rho)-o_{K,I}(1)
 \leq \Delta_N(\rho)
 \leq J_K(u_\rho)+\frac{u_\rho}{K+1}+o_{K,I}(1).
\]
More explicitly, there is a nonnegative remainder $r_{K,I}(N)$, tending
to zero for fixed $K,I$, such that
\begin{equation}\label{asym:uniform-sandwich}
\begin{split}
\sup_{\rho\in I}|\Delta_N(\rho)-J(u_\rho)|
\le r_{K,I}(N)+\max\left\{
\sup_{\rho\in I}(J-J_K)(u_\rho),
\frac{\sup_{\rho\in I}u_\rho}{K+1}\right\}.
\end{split}
\end{equation}
Given $\varepsilon>0$, choose $K$ so that the maximum is less than
$\varepsilon$, using Lemma~\ref{asym:kappa-convergence}. Then choose
$N$ so that $r_{K,I}(N)<\varepsilon$. This proves the asserted
uniform convergence and makes explicit that $N$ tends to infinity
before $K$.
Substitution into \eqref{asym:G} identifies $J(u_\rho)$ with
\eqref{asym:kappa}. Positivity follows from
Lemma~\ref{asym:kappa-convergence}, and continuous differentiability
from Lemma~\ref{asym:kappa-regularity}, completing the proof.
\end{proof}

Theorem~\ref{asym:main} is a limiting statement with $K$ fixed before
$N$ increases. It does not provide an effective numerical crossover:
in particular, terms of orders $N$ and $N^{3/2}/\log^2 N$ fit within
its remainder. The following bounds retain the finite prime sum and
make one source of the latter order explicit.

\begin{proposition}\label{asym:finite-prime}
Fix $K$ and a compact positive interval $I$. For $\rho\in I$, put
$m=\lfloor\rho N\rfloor$, $u=(2\rho)^{-1/2}$, $\beta=u\sqrt N$, and
\[
 \mathcal A_{K,N}(\rho)=F_N(\beta)+\beta m
 -\sqrt N\sum_{N/(K+1)<p\le N}
                 G_{\lfloor N/p\rfloor}(u,p/N).
\]
There is $C_{K,I}>0$ such that, uniformly for $\rho\in I$ and all
sufficiently large $N$,
\begin{align*}
 \log R_N(m)&\le \mathcal A_{K,N}(\rho)
       +C_{K,I}\frac{N^{11/8}}{\log N},\\
 \log R_N(m)&\ge \mathcal A_{K,N}(\rho)-\beta s_{K,N}
       -C_{K,I}\left(\frac{N^{3/2}}{\log^2 N}
                    +\frac{N^{11/8}}{\log N}\right).
\end{align*}
\end{proposition}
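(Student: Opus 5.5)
The plan is to rerun the proof of Theorem~\ref{asym:main} with $K$ fixed, keeping the exact prime sum $\sum_p G_{\lfloor N/p\rfloor}(u,p/N)$ and never replacing it by the integral $J_K$. This removes the $\varepsilon_K(N)$ and $N^{3/2}/\log^2N$ errors from the Stieltjes step of Lemma~\ref{asym:prime-sum}. What remains are the per-block errors of Lemma~\ref{asym:block-probability}, namely $O(N^{3/8}+\log N)$ per prime. Summed over $O_K(N/\log N)$ primes, they give $O(N^{11/8}/\log N)$.

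For the upper bound, I take $\beta=u\sqrt N$ exactly. Lemmas~\ref{asym:necessary} and \ref{asym:count-inequality} give
\[
\log R_N(m)\le F_N(\beta)+\beta m+\log\mathbb P_\beta(\mathcal E_{K,N}).
\]
By independence of the selected blocks, $\log\mathbb P_\beta(\mathcal E_{K,N})=\sum_p\log\mathbb P_\beta(E_p)$. Applying \eqref{asym:block-rate} to each block then gives the stated upper bound directly, with no further error.

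For the lower bound, I use the bulk parameter $\beta'$ of Lemma~\ref{asym:bulk-saddle}. As in the main proof, \eqref{asym:conditioned-product}, the injection Lemma~\ref{asym:global-injection} and Lemma~\ref{asym:count-inequality} give
\[
\log R_N(m)\ge F_N(\beta')+\beta'(m-s_{K,N}-N^{2/3})-\mathcal D_{K,N}(\beta')-\sqrt N\sum_pG_{\lfloor N/p\rfloor}(u',p/N)+O\!\left(\frac{N^{11/8}}{\log N}\right),
\]
where $u'=\beta'/\sqrt N$ and the $\log$ of the window probability is $O(1)$. The remaining task is to compare this with $\mathcal A_{K,N}(\rho)$, which is evaluated at $\beta$.

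\begin{itemize}
\item The terms $\mathcal D_{K,N}=O(N)$ and $\beta'N^{2/3}=O(N^{7/6})$ are both $O(N^{3/2}/\log^2N)$.
\item $\beta' s_{K,N}$ and $\beta s_{K,N}$ differ by $|\beta'-\beta|s_{K,N}=O(\sqrt N/\log N)\cdot O(N/\log N)$, by \eqref{asym:bulk-saddle-estimate}.
\item $G_k$ is Lipschitz in $u$ on the compact saddle range. Hence replacing $u'$ by $u$ in the prime sum costs $\sqrt N\cdot O(N/\log N)\cdot O(1/\log N)=O(N^{3/2}/\log^2N)$.
\end{itemize}

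The main obstacle is the free-energy comparison: showing that $F_N(\beta')+\beta'm\ge F_N(\beta)+\beta m-O(N^{3/2}/\log^2N)$. Since $|\beta'-\beta|\asymp\sqrt N/\log N$, a first-order expansion alone is not enough. I would use the convexity of $\beta\mapsto F_N(\beta)+\beta m$ together with its derivative $m-\mathbb E_\beta S$. By \eqref{asym:global-M} and $m=\lfloor\rho N\rfloor$, this derivative is $O(\sqrt N\log N)$ at $\beta=u\sqrt N$. Convexity then gives
\[
F_N(\beta')+\beta'm\ge F_N(\beta)+\beta m+(\beta'-\beta)(m-\mathbb E_\beta S),
\]
and the error term is $O(\sqrt N/\log N\cdot\sqrt N\log N)=O(N)$. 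This handles the comparison without any second-order estimate. Collecting all the errors yields the lower bound, uniformly in $\rho\in I$ for large $N$.
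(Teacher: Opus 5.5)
Your proposal is correct and follows the paper's proof essentially step for step. The upper bound sums \eqref{asym:block-rate} exactly at $\beta=u\sqrt N$, and the lower bound uses the bulk parameter $\beta'$ of Lemma~\ref{asym:bulk-saddle}, with Lipschitz replacement of the prime sum and of $\beta' s_{K,N}$ at cost $O(N^{3/2}/\log^2N)$. The only variation is the free-energy comparison: the paper combines $\gamma m+N^2/(2\gamma)\ge N\sqrt{2m}$ with \eqref{asym:global-F}, losing $O(N\log^2N)$, whereas your tangent-line bound from convexity of $\gamma\mapsto F_N(\gamma)+\gamma m$ and \eqref{asym:global-M} loses only $O(N)$; either loss is absorbed in the stated error.
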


\begin{proof}
Summing \eqref{asym:block-rate} over the $O(N/\log N)$ selected
primes gives, uniformly on the relevant compact parameter range,
\[
 \log\mathbb P_\gamma(\mathcal E_{K,N})
 =-\sqrt N\sum_pG_{\lfloor N/p\rfloor}(\gamma/\sqrt N,p/N)
       +O_{K,I}(N^{11/8}/\log N).
\]
The necessary condition and Lemma~\ref{asym:count-inequality}
give the upper bound at $\gamma=\beta$. For the lower bound,
use the bulk parameter $\widetilde\beta$ and the lower-bound
argument leading to \eqref{asym:lower-fixed-K} to obtain
\begin{align*}
 \log R_N(m)\ge{}&F_N(\widetilde\beta)+\widetilde\beta m
 -\sqrt N\sum_pG_{\lfloor N/p\rfloor}
                   (\widetilde\beta/\sqrt N,p/N)
 -\widetilde\beta s_{K,N}\\
 &-O_{K,I}(N^{11/8}/\log N).
\end{align*}
The deletion cost $O_K(N)$ and window cost $O(N^{7/6})$ are
absorbed in the error. Since
$\widetilde\beta/\sqrt N-u=O_{K,I}(1/\log N)$, the Lipschitz
bounds for $G_k$ and $\pi_{\mathrm{pr}}(N)=O(N/\log N)$ bound
each replacement of the prime sum and of
$\widetilde\beta s_{K,N}$ by its value at $\beta$ by
$O_{K,I}(N^{3/2}/\log^2N)$. Finally,
$\gamma m+N^2/(2\gamma)\ge N\sqrt{2m}$ and
\eqref{asym:global-F} give
$F_N(\widetilde\beta)+\widetilde\beta m
\ge F_N(\beta)+\beta m-O_I(N\log^2N)$, completing the lower bound.
\end{proof}

\begin{corollary}\label{asym:integrality}
For every compact $U\subset(0,\infty)$, uniformly for $u\in U$,
\[
 \log\mathbb P_{u\sqrt N}(S\in\mathbb Z)
 =-J(u)\frac{N^{3/2}}{\log N}
       +o_U(N^{3/2}/\log N).
\]
The same formula holds with $S\in\mathbb Z$ replaced by
$S=\lfloor N/(2u^2)\rfloor$.
\end{corollary}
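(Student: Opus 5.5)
We prove both formulas by a two-sided sandwich with $\beta=u\sqrt N$ and $m_u=\lfloor N/(2u^2)\rfloor$, reusing the fixed-$K$ bounds from the proof of Theorem~\ref{asym:main}. Throughout, $\rho=1/(2u^2)$ ranges over a compact interval $I$ as $u$ ranges over $U$.

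\textbf{Upper bound.} The event $\{S\in\mathbb Z\}$ contains $\{S=m_u\}$, so it suffices to bound the larger event. By Lemma~\ref{asym:necessary}, $\{S\in\mathbb Z\}\subseteq\mathcal E_{K,N}$. Lemma~\ref{asym:prime-sum} then gives
\[
 \log\mathbb P_\beta(S=m_u)\le\log\mathbb P_\beta(S\in\mathbb Z)\le -J_K(u)N^{3/2}/\log N+o_{K,U}(N^{3/2}/\log N),
\]
uniformly in $u\in U$. Since $J_K\le J$ and the tail \eqref{asym:kappa-tail} is uniformly small, letting $N\to\infty$ first and then $K\to\infty$ yields the upper bound $-J(u)N^{3/2}/\log N+o_U(N^{3/2}/\log N)$ for both events.

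\textbf{Lower bound.} It suffices to bound $\mathbb P_\beta(S=m_u)$ from below, since it is the smaller event. By \eqref{asym:coefficient-identity},
\[
 \log\mathbb P_\beta(S=m_u)=\log R_N(m_u)-F_N(\beta)-\beta m_u .
\]
Theorem~\ref{asym:main}, applied with $\rho$ chosen so that $\lfloor\rho N\rfloor=m_u$ (taking $\rho=1/(2u^2)$ works), gives $\log R_N(m_u)=\sqrt{2\rho}\,N^{3/2}-J(u)N^{3/2}/\log N+o$. Here we use $\kappa(\rho)=J(u_\rho)$ with $u_\rho=u$, which was established at the end of that proof. Lemma~\ref{asym:global-free} gives $F_N(\beta)=N^2/(2\beta)+O(N\log^2N)=N^{3/2}/(2u)+O(N\log^2 N)$. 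Also $\beta m_u=N^{3/2}/(2u)+O(\sqrt N)$. Since $\sqrt{2\rho}=1/u$, the main terms $N^{3/2}/u$ cancel exactly. What remains is the $-J(u)N^{3/2}/\log N$ term plus errors of size $o(N^{3/2}/\log N)$.

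\textbf{Main obstacle.} The key point is that the exponential tilt $\beta$ equals the saddle point exactly at density $\rho=1/(2u^2)$. This is why the main terms cancel to within $O(N\log^2N)$, with no first-order mismatch of size $N^{3/2}$ to control. Uniformity in $u$ follows from the uniformity of Theorem~\ref{asym:main} on the compact $\rho$-interval and of Lemma~\ref{asym:global-free} on $[c\sqrt N,C\sqrt N]$.
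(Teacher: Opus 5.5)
Your proposal is correct and follows essentially the paper's own argument. The lower bound comes from substituting $\rho=1/(2u^2)$ into Theorem~\ref{asym:main} together with \eqref{asym:coefficient-identity} and \eqref{asym:global-F}, so that the $N^{3/2}$ terms cancel; the upper bound comes from $\{S\in\mathbb Z\}\subseteq\mathcal E_{K,N}$, Lemma~\ref{asym:prime-sum}, and letting $K\to\infty$ after $N\to\infty$ using the uniform convergence $J_K\to J$.
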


\begin{proof}
For the exact target, substitute $\rho=1/(2u^2)$ into
Theorem~\ref{asym:main} and use
\eqref{asym:coefficient-identity} and \eqref{asym:global-F}.
The terms of order $N^{3/2}$ cancel, and
$N\log^2 N=o(N^{3/2}/\log N)$. This gives a lower bound for the
integrality probability. Conversely,
$\{S\in\mathbb Z\}\subseteq\mathcal E_{K,N}$ for every fixed $K$.
Lemma~\ref{asym:prime-sum} gives the upper bound with $J_K(u)$;
uniform convergence $J_K\to J$ completes the proof.
\end{proof}

Since $\log L_N\sim N$ by the prime number theorem, this corollary
also gives
\[
 \frac{\log\bigl(L_N\mathbb P_{u\sqrt N}(S\in\mathbb Z)\bigr)}
 {N^{3/2}/\log N}\longrightarrow-J(u)<0.
\]
Thus the probability of an integer total is much smaller than the
uniform-residue value $1/L_N$.

\begin{corollary}\label{asym:integer-parts}
Let $R_N^{+}(m)$ count partitions using
$C_N\cup\{1,2,\ldots,N\}$. Uniformly for $\rho$ in every compact
positive interval,
\[
 \log R_N^{+}(\lfloor\rho N\rfloor)
 =\sqrt{2\rho}\,N^{3/2}-\kappa(\rho)\frac{N^{3/2}}{\log N}
       +o(N^{3/2}/\log N).
\]
\end{corollary}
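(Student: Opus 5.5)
The plan is to sandwich $R_N^+(m)$ between $R_N(m)$ and a quantity whose logarithm has the same expansion to the stated order. Write $m=\lfloor\rho N\rfloor$, and let $\mathcal I=\{1,\ldots,N\}$ be the set of integer parts.

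\emph{Lower bound.} The inequality $R_N^+(m)\ge R_N(m)$ is immediate, because every partition counted by $R_N(m)$ uses only the alphabet $C_N$, which sits inside the enlarged alphabet. Theorem~\ref{asym:main} then gives the required lower bound, uniformly on the interval.

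\emph{Upper bound.} I split a partition counted by $R_N^+(m)$ into its integer parts and its fractional parts. The integer parts sum to some integer $j$, so the fractional parts sum to the integer $m-j$. Hence
\[
 R_N^+(m)=\sum_{j=0}^{m}p_N(j)\,R_N(m-j)\le \Bigl(\sum_{j\le m}p(j)\Bigr)\max_{0\le m'\le m}R_N(m').
\]
Here $p_N(j)$ counts partitions of $j$ into parts at most $N$, and $p(j)$ is the ordinary partition function. The classical bound $\log p(j)=O(\sqrt j)$ shows that the first factor is $\exp(O(\sqrt N))$, since $m=O(N)$. This is negligible against $N^{3/2}/\log N$.

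It remains to control $\max_{m'\le m}R_N(m')$. I would use monotonicity. Adding two copies of $1/2$ injects partitions of $m'$ into partitions of $m'+1$, so $R_N$ is nondecreasing in its argument. The maximum is therefore $R_N(m)$, and Theorem~\ref{asym:main} finishes the upper bound.

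\emph{Alternative route.} One could instead avoid monotonicity and argue probabilistically. Under $\mathbb P_\beta$ for the enlarged alphabet, the integer-part block is independent of the rest. The event that the total is an integer is then exactly the event that the fractional total $S$ is an integer, and that event is contained in $\mathcal E_{K,N}$. The extra factor in the partition function is $\prod_{a\le N}(1-e^{-\beta a})^{-1}=\exp(O(\sqrt N\log N))$ at $\beta\asymp\sqrt N$. With Lemma~\ref{asym:count-inequality} and Lemma~\ref{asym:prime-sum}, this gives the upper bound with $J_K$, and one then lets $K\to\infty$ as in the main proof.

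The only real point to check is that the $1/2$ padding is legitimate, and it is: $1/2\in C_N$ for $N\ge2$, and appending parts is injective. I expect no genuine obstacle, so the monotonicity route is the one I would write up.
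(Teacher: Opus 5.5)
Your monotonicity route is correct, and it differs from the paper's argument, which is essentially your ``alternative route.''

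\textbf{The paper's route.} The paper never compares $R_N^+$ with $R_N$ coefficientwise. It reruns the upper bound of Theorem~\ref{asym:main}. At $\beta\asymp\sqrt N$ the integer parts add only $O(e^{-c\sqrt N})$ to the free energy, and they do not affect any event $E_p$. So the bound via $\mathcal E_{K,N}$ survives, and one lets $N\to\infty$ and then $K\to\infty$.

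\textbf{Your route.} You combine the convolution identity $R_N^+(m)=\sum_{j}p_N(j)R_N(m-j)$ with the injection $x\mapsto x+2e_{1/2}$, which gives $R_N(m')\le R_N(m'+1)$. Together these yield the sandwich
\[
R_N(m)\le R_N^+(m)\le(m+1)e^{\pi\sqrt{2m/3}}R_N(m),
\]
that is, $\log R_N^+(m)=\log R_N(m)+O_I(\sqrt N)$ uniformly for $\rho\in I$.

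\textbf{Comparison.} Your route is more elementary and never re-enters the $K$-limit machinery. It is also more informative: any refinement of Theorem~\ref{asym:main}, such as the candidate second-order term in Section~\ref{sec:questions}, transfers to $R_N^+$ automatically up to $O(\sqrt N)$. The paper's route stays inside its probabilistic framework and shows directly why integer parts are invisible at the saddle.

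\textbf{A small correction.} In your alternative route, at $\beta\asymp\sqrt N$ every factor $(1-e^{-\beta a})^{-1}$ equals $1+O(e^{-c\sqrt N})$. The extra product is therefore $\exp(O(e^{-c\sqrt N}))$, not just $\exp(O(\sqrt N\log N))$. Your bound is true but very loose, and it does no harm to the argument.
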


\begin{proof}
The lower bound follows from $R_N^{+}(m)\ge R_N(m)$.
For $\beta$ in a compact positive multiple of $\sqrt N$, the added
free energy satisfies
\[
 0\le\sum_{a=1}^N-\log(1-e^{-\beta a})
 \le 2\sum_{a\ge1}e^{-\beta a}=O(e^{-c\sqrt N}).
\]
Integer parts do not alter any selected prime condition. Therefore
the coefficient upper bound used in \eqref{asym:upper-fixed-K}
remains valid with this exponentially small addition. Letting
$N\to\infty$ for fixed $K$, and then $K\to\infty$, proves the
matching upper bound uniformly in $\rho$.
\end{proof}

\begin{remark}\label{cor:norecurrence}
Theorem~\ref{asym:main} gives $\log R(n)\sim\sqrt2\,n^{3/2}$,
so $\sum_{n\ge1}R(n)z^n$ has radius zero and $R(n)$ is not
P-recursive. Indeed, the leading polynomial of a nonzero recurrence
is nonvanishing for all sufficiently large integer indices. An
order-zero recurrence would force eventual vanishing; otherwise
$|R(n+r)|\le Cn^D\max_{0\le j<r}|R(n+j)|$, which iterates to
$|R(n)|\le\exp(O(n\log n))$ after absorbing the finitely many
initial values, a contradiction; see \cite{stanleyec2,flajoletsedgewick}.
\end{remark}
\subsection{The size of the correction and exact small values}
\label{sec:numerical-correction}

The correction is positive and its numerical size varies substantially
with the density: $\kappa(1/8)\approx0.198672$, whereas
$\kappa(1)\approx0.00264713$. The following proposition shows that
$\kappa$ tends to zero at both ends of $(0,\infty)$.

\begin{proposition}\label{prop:kappa-endpoints}
As $\rho\downarrow0$,
\[
\kappa(\rho)=\sqrt{\frac{\rho}{2}}\,
                 \log\log(1/\rho)+O(\sqrt\rho).
\]
As $\rho\to\infty$, $\kappa(\rho)=O(\rho^{-3/2})$.
In particular, $\kappa$ tends to zero at both ends of $(0,\infty)$.
\end{proposition}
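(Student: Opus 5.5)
Throughout, write $y_k(t)=\pi^2\rho\Phi_k t/3$ and $g_k(y)=\min_{w\in L_k^{-1}\mathbb Z_{\ge0}}(\sqrt w-\sqrt y)^2$, so that by \eqref{asym:kappa}
\[
\kappa(\rho)=\frac1{\sqrt{2\rho}}\sum_{k\ge1}\int_{1/(k+1)}^{1/k}g_k(y_k(t))\,dt .
\]
The plan uses two elementary bounds on $g_k$ and then splits the sum over $k$ according to the size of $L_k y_k$. The first bound is trivial: taking $w=0$ gives $g_k(y)\le y$. The second is the nearest-point bound $g_k(y)\le 1/(4L_k^2y)$, proved in Lemma~\ref{asym:kappa-convergence}. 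I will also use an exact evaluation in the small regime: if $y\le 1/(4L_k)$, then every nonzero $w$ satisfies $\sqrt w\ge 2\sqrt y$. Hence $(\sqrt w-\sqrt y)^2\ge y$, and so $g_k(y)=y$ there.

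\emph{The case $\rho\to\infty$.} Apply the nearest-point bound to every shell. On $[1/(k+1),1/k]$ it gives integrand at most $3/(4\pi^2\rho\Phi_k tL_k^2)$. Integrating, each shell is $O(\log(1+1/k)/(\rho\Phi_kL_k^2))$. Since $\Phi_k\gg k^2$, these bounds sum to $O(1/\rho)$. Dividing by $\sqrt{2\rho}$ gives $\kappa(\rho)=O(\rho^{-3/2})$.

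\emph{The case $\rho\downarrow0$.} On shell $k$ we have $y_k(t)\asymp\rho k$, using $\Phi_k\asymp k^2$ and $t\asymp1/k$. Let $k_1$ be the largest $k$ with $4L_k\max_t y_k(t)\le1$, that is, roughly $L_k\rho k\le c$. For $k\le k_1$ the exact evaluation gives $g_k(y_k)=y_k$, so the shell contributes
\[
\int_{1/(k+1)}^{1/k}\frac{\pi^2\rho\Phi_k t}{3}\,dt=\frac{\pi^2\rho\Phi_k(2k+1)}{6k^2(k+1)^2}.
\]
By \eqref{asym:phi-sum}, $\Phi_k=3k^2/\pi^2+O(k\log k)$, so this equals $\rho/k+O(\rho\log k/k^2)$. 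Summing over $k\le k_1$ gives $\rho\log k_1+O(\rho)$. Beyond $k_1$ the argument switches bounds:
\begin{itemize}
\item for $k_1<k\le k_2$, where $k_2$ is the least $k$ with $L_k\ge\rho^{-2}$, use $g_k\le y$, so each shell is $O(\rho/k)$;
\item for $k>k_2$, use the nearest-point bound, so each shell is $O(1/(\rho k^3L_k^2))$. The sum over $k>k_2$ is then $O(\rho^3)$, because $L_k^2\ge\rho^{-4}$ there.
\end{itemize}

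\emph{Main obstacle.} The delicate point is controlling $k_1$, $k_2$ and the window between them. This needs only Chebyshev bounds $c_1k\le\log L_k=\psi(k)\le c_2k$, valid for large $k$. They give $k_1\ge \log(1/\rho)/c_2-O(\log\log(1/\rho))$ and $k_2\le 2\log(1/\rho)/c_1+O(1)$. Hence $\log k_1=\log\log(1/\rho)+O(1)$ and $k_2/k_1=O(1)$. The window therefore contributes $O(\rho\sum_{k_1<k\le k_2}1/k)=O(\rho)$. Altogether
\[
\sqrt{2\rho}\,\kappa(\rho)=\rho\log\log(1/\rho)+O(\rho).
\]
Dividing by $\sqrt{2\rho}$ gives $\kappa(\rho)=\sqrt{\rho/2}\,\log\log(1/\rho)+O(\sqrt\rho)$. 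Both endpoint limits are zero, proving the final assertion.
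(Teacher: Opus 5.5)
Your proof is correct and follows the paper's argument. The empty point is the exact minimizer for $k$ up to about $\log(1/\rho)/C$, and this gives the lower bound. The empty-point bound then gives the upper bound through $k\asymp\log(1/\rho)/c$, and the nearest-point bound $g_k(y)\le1/(4L_k^2y)$ controls the tail at both ends; Chebyshev's estimates make the two cutoffs comparable, so your $k_1,k_2$ play the roles of the paper's $K_-,K_+$.

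One cosmetic point: $L_k\Phi_k/k$ is not monotone in $k$ (for example, $\Phi_{30}/30<\Phi_{29}/29$ while $L_{30}=L_{29}$). So $k_1$ should be defined through the monotone majorant $\Phi_k\le k(k+1)/2$, as the paper does, to guarantee the exact evaluation for every $k\le k_1$. Any shells that slipped through would cost only $O(\rho)$ anyway.
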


\begin{proof}
Write $u=(2\rho)^{-1/2}$ and let $\kappa_k(\rho)$ be the $k$th
integral in \eqref{asym:kappa}, including its factor $u$.
Choosing the nearest nonnegative point of $L_k^{-1}\mathbb Z$ to
$\pi^2\rho\Phi_k t/3$ gives
\begin{equation}\label{eq:numerical-tail-shell}
0\le\kappa_k(\rho)\le
\frac{3u^3}{2\pi^2\Phi_kL_k^2}\log(1+1/k).
\end{equation}
This is the bound used in Lemma~\ref{asym:kappa-convergence}.
Its sum is $O(u^3)$, proving the assertion at infinity.

For the other endpoint, Chebyshev's elementary bounds give
constants $0<c\le C$ such that $e^{ck}\le L_k\le e^{Ck}$
for all sufficiently large $k$ \cite{apostol}. Indeed,
$\log L_k=\sum_{p^r\le k}\log p$ is Chebyshev's prime-power sum.
Put $\lambda=\log(1/\rho)$,
$K_-=\lfloor\lambda/(2C)\rfloor$, and
$K_+=\lceil2\lambda/c\rceil$. The finitely many smaller values
of $k$ can be absorbed into fixed constants.
The empty point is the minimizer whenever
$4L_k\pi^2\rho\Phi_k t/3\le1$.
Since $\Phi_k\le k(k+1)/2$ and $t\le1/k$, this condition holds
uniformly for $k\le K_-$ when $\rho$ is sufficiently small:
the left side is $O(\rho(K_-+1)e^{CK_-})
=O(\rho^{1/2}\lambda)=o(1)$.
The sum of the empty-point contributions through $K$ equals
\[
E_K(\rho)=\frac{\pi^2}{12u}
\sum_{k\le K}\Phi_k\left(\frac1{k^2}-\frac1{(k+1)^2}\right)
=\frac{\log K}{2u}+O(1/u).
\]
Here $\Phi_k=3k^2/\pi^2+O(k\log k)$, and the error after
multiplication by $k^{-2}-(k+1)^{-2}$ is summable.
Thus $\kappa(\rho)\ge E_{K_-}(\rho)$.
Choosing the empty point also gives an upper bound through $K_+$;
by \eqref{eq:numerical-tail-shell}, the remaining sum is at most
\[
C_1\rho^{-3/2}\sum_{k>K_+}k^{-3}e^{-2ck}
=O\!\left(\frac{\rho^{5/2}}{\lambda^3}\right).
\]
Since $\log K_\pm=\log\lambda+O(1)$, these two bounds prove
the stated expansion.
\end{proof}

For $\alpha_k=\pi^2\rho\Phi_k/3$, the minimizing value changes from
$j/L_k$ to $(j+1)/L_k$ at $t_{k,j}$, and its branch antiderivative is
\begin{align*}
 t_{k,j}&=\frac{(\sqrt j+\sqrt{j+1})^2}{4L_k\alpha_k},\qquad j\ge0,\\
 &u\left\{\frac{j}{L_k}t+\frac{\alpha_k t^2}{2}
 -\frac43\sqrt{\frac{j\alpha_k}{L_k}}\,t^{3/2}\right\}.
\end{align*}
Table~\ref{tab:kappa-values} uses these antiderivatives for $k\le10$
with 45-digit interval arithmetic and the following bound for the
tail after any $K\le40$:
\begin{equation}\label{eq:certified-kappa-tail}
\frac{3u^3}{2\pi^2}
\sum_{k=K+1}^{40}
\frac{\log(1+1/k)}{\Phi_kL_k^2}
\;+\;\frac{6u^3}{\pi^2\,40^2 L_{41}^2}.
\end{equation}
This follows from \eqref{eq:numerical-tail-shell}, $L_k\ge L_{41}$
for $k>40$, $\sum_{k>40}k^{-3}\le1/(2\cdot40^2)$, and
$\Phi_k\ge k^2/8$; the latter follows by counting coprime pairs,
since $2\Phi_k-1\ge k^2(1-\sum_{d\ge2}d^{-2})\ge k^2/4$.

\begin{table}[htbp]
\centering
\small
\setlength{\tabcolsep}{4pt}
\begin{tabular}{r r r r r r}
\hline
$\rho$ & $\kappa(\rho)$ & $k=1$ & $k=2$ & $k=3$ & $k\ge4$\\
\hline
$1/4$ & 0.040528848550 & 0.037819141526 & 0.002627259101 & 0.000072911654 & 0.000009536268\\
$1/2$ & 0.008473235439 & 0.007779343308 & 0.000667294280 & 0.000023259475 & 0.000003338376\\
1 & 0.002647130645 & 0.002380009649 & 0.000257499430 & 0.000008423475 & 0.000001198091\\
2 & 0.001299249643 & 0.001221580886 & 0.000074041631 & 0.000003209767 & 0.000000417360\\
4 & 0.000400198235 & 0.000371395638 & 0.000027543816 & 0.000001109011 & 0.000000149770\\
\hline
\end{tabular}
\caption{The correction and its prime-range contributions. Each
displayed entry has absolute error less than $2\cdot10^{-12}$;
the last column includes the certified infinite tail.}
\label{tab:kappa-values}
\end{table}

The certified interval underlying the diagonal entry is
\begin{equation}\label{eq:kappa-one-enclosure}
0.0026471306446147<\kappa(1)<0.0026471306448827.
\end{equation}
At $\rho=1$, the first two ranges contribute approximately
$89.91\%$ and $9.73\%$, respectively. The dependence on $\rho$ is
not monotone: for example, certified evaluation gives
$\kappa(2/5)<0.006126$,
$0.007063<\kappa(3/5)<0.007064$, and
$\kappa(4/5)<0.005975$.
Figure~\ref{fig:kappa} displays the resulting changes in size.

\begin{table}[htbp]
\centering
\footnotesize
\renewcommand{\arraystretch}{1.05}
\begin{tabular}{@{}r@{\qquad}l@{}}
\toprule
$n$ & $R(n)=R_n(n)$\\
\midrule
1 & $0$\\
2 & $1$\\
3 & $16$\\
4 & $226$\\
5 & $8\,047$\\
\addlinespace[2pt]
6 & $223\,310$\\
7 & $19\,427\,673$\\
8 & $1\,208\,928\,320$\\
9 & $209\,924\,387\,816$\\
10 & $29\,755\,936\,703\,990$\\
\addlinespace[2pt]
11 & $11\,435\,584\,649\,661\,823$\\
12 & $1\,722\,867\,557\,733\,626\,077$\\
13 & $1\,308\,567\,615\,018\,603\,641\,255$\\
14 & $616\,563\,815\,825\,352\,848\,124\,715$\\
15 & $1\,073\,916\,756\,716\,760\,287\,994\,290\,630$\\
\addlinespace[2pt]
16 & $654\,192\,576\,413\,390\,465\,389\,509\,894\,893$\\
17 & $1\,713\,579\,344\,111\,425\,236\,909\,679\,499\,066\,846$\\
18 & $832\,797\,074\,866\,439\,220\,111\,041\,884\,387\,838\,792$\\
19 & $3\,787\,994\,310\,866\,844\,887\,921\,900\,156\,675\,831\,824\,405$\\
20 & $5\,343\,649\,357\,268\,466\,678\,865\,138\,717\,973\,085\,492\,092\,027$\\
\addlinespace[2pt]
21 & $55\,659\,066\,853\,112\,618\,668\,696\,724\,258\,520\,974\,958\,895\,208\,295$\\
22 & $191\,559\,122\,061\,911\,699\,196\,340\,327\,907\,977\,473\,388\,072\,907\,276\,903$\\
23 & $2\,645\,089\,154\,281\,566\,086\,039\,531\,116\,554\,182\,801\,522\,031\,249\,536\,040\,441$\\
24 & $3\,573\,028\,045\,270\,571\,303\,447\,492\,815\,515\,542\,627\,470\,939\,675\,926\,233\,039\,487$\\
25 & $126\,637\,408\,444\,280\,403\,140\,441\,974\,135\,629\,812\,179\,916\,125\,319\,085\,708\,684\,298\,011$\\
\addlinespace[2pt]
26 & $971\,033\,061\,510\,856\,466\,527\,698\,732\,604\,536\,926\,649\,352\,879\,911\,184\,057\,642\,013\,257\,042$\\
27 & $23\,626\,057\,379\,493\,707\,057\,161\,991\,461\,163\,559\,081\,755\,443\,065\,716\,719\,423\,732\,510\,899\,941\,648$\\
28 & $185\,550\,042\,166\,916\,445\,919\,770\,483\,398\,648\,873\,696\,090\,342\,416\,069\,487\,335\,075\,960\,594\,026\,074\,280$\\
29 & $11\,405\,266\,619\,777\,829\,385\,753\,183\,113\,306\,631\,680\,189\,533\,396\,207\,178\,032\,524\,924\,030\,882\,716\,482\,886\,017$\\
30 & $16\,495\,918\,246\,886\,874\,946\,781\,417\,720\,997\,202\,026\,302\,647\,499\,107\,720\,832\,106\,997\,617\,318\,522\,025\,532\,711\,500$\\
\bottomrule
\end{tabular}
\caption{Exact rational partition counts for $1\le n\le30$.
Spaces separate groups of three digits. Integer parts are excluded,
as in \eqref{eq:alphabet}--\eqref{eq:countdefinition}.}
\label{tab:diagonal-values}
\end{table}

Table~\ref{tab:diagonal-values} was computed by exact integer
coefficient extraction. For $5\le n\le30$, two different decompositions
into prime-denominator blocks give the same counts; the smaller cases
are computed directly. The numerical supplement gives the coefficient
identity, its proof, and the verification data.
At $n=16$, the leading term is approximately $90.509668$, whereas
the exact logarithm is approximately $68.653199$; the identified
arithmetic correction is only $0.061104$.
Thus the theorem should not be read as an effective small-$n$
approximation.

\clearpage

\begin{figure}[htbp]
\centering
\begin{tikzpicture}
\begin{axis}[
width=0.86\textwidth,height=0.43\textwidth,
xmode=log,log basis x=2,ymode=log,
xmin=0.125,xmax=8,ymin=0.0001,ymax=0.3,
xtick={0.125,0.25,0.5,1,2,4,8},
xticklabels={$1/8$,$1/4$,$1/2$,$1$,$2$,$4$,$8$},
xlabel={$\rho$},ylabel={$\kappa(\rho)$},
grid=major,major grid style={gray!20},
tick label style={font=\small},
label style={font=\small},
scaled ticks=false
]
\addplot[blue!65!black,thick,no marks] coordinates {
(0.125,0.198672050447135872)
(0.129408115480172188,0.189342313302262034)
(0.133971682817036646,0.179776315518291512)
(0.138696184008480626,0.169975127918440757)
(0.143587294374629376,0.159987554409015623)
(0.148650889375340133,0.149850173648442638)
(0.153893051668114536,0.139630852812816971)
(0.159320078414907768,0.129754709022786995)
(0.164938488846611782,0.120342163704783279)
(0.17075503209429944,0.111387370371989002)
(0.176776695296636881,0.102862615898931819)
(0.183010711993203178,0.0947653975964077485)
(0.18946457081379976,0.0871193811910223814)
(0.196146024474187684,0.0799286715855016867)
(0.203063099089058881,0.073184244804571892)
(0.210224103813428636,0.0668957173000875636)
(0.217637640824031035,0.0610281221941499597)
(0.225312615652707561,0.0555364108587464119)
(0.233258247884201854,0.0502835453444796529)
(0.241484082231211388,0.0452752342058762983)
(0.25,0.0405288487397914527)
(0.258816230960344376,0.0360242243647598874)
(0.267943365634073291,0.0317691797444786801)
(0.277392368016961252,0.0277813160864482447)
(0.287174588749258752,0.024087319451278033)
(0.297301778750680267,0.0206916636065231338)
(0.307786103336229071,0.0175936030846463052)
(0.318640156829815536,0.0148109916752648149)
(0.329876977693223565,0.0123456934280440749)
(0.34151006418859888,0.01024018892340114)
(0.353553390593273762,0.00859145532966607306)
(0.366021423986406355,0.00740419008965413635)
(0.378929141627599521,0.00664848458337691277)
(0.392292048948375368,0.00622662970817042853)
(0.406126198178117761,0.00611138216658809429)
(0.420448207626857272,0.00629026140220955338)
(0.43527528164806207,0.00677850421926897143)
(0.450625231305415122,0.00751882335903267779)
(0.466516495768403708,0.00805433376633344112)
(0.482968164462422776,0.00835742184181579119)
(0.5,0.00847323550657365124)
(0.517632461920688752,0.008458370280347101)
(0.535886731268146582,0.00833106199323676686)
(0.554784736033922504,0.00804395817562336575)
(0.574349177498517503,0.00762079442097858875)
(0.594603557501360533,0.00717576098840819621)
(0.615572206672458142,0.00676046334942744099)
(0.637280313659631072,0.00640899761305591428)
(0.65975395538644713,0.00615507459566275718)
(0.683020128377197759,0.00603282266199991937)
(0.707106781186547524,0.00600226021729954657)
(0.73204284797281271,0.00604212653170641801)
(0.757858283255199041,0.00617864345783154993)
(0.784584097896750736,0.00612769966163024671)
(0.812252396356235523,0.00581658375270081993)
(0.840896415253714543,0.00535965456108075406)
(0.870550563296124139,0.0047394988418229086)
(0.901250462610830243,0.00400410128109693146)
(0.933032991536807416,0.00337328076952405454)
(0.965936328924845551,0.00291143211084784058)
(1.0,0.00264713066851583444)
(1.0352649238413775,0.002543747580789376)
(1.07177346253629316,0.00264192547615046307)
(1.10956947206784501,0.00269900919295418377)
(1.14869835499703501,0.00268557818637764415)
(1.18920711500272107,0.00259095303522744075)
(1.23114441334491628,0.00244602598965987182)
(1.27456062731926214,0.00231599977462419631)
(1.31950791077289426,0.00228088156537296935)
(1.36604025675439552,0.00232932710090078787)
(1.41421356237309505,0.00231087547201056152)
(1.46408569594562542,0.00212314670236810543)
(1.51571656651039808,0.00178917322481089941)
(1.56916819579350147,0.00151617714126471384)
(1.62450479271247105,0.00140187996405540476)
(1.68179283050742909,0.00142645941271357924)
(1.74110112659224828,0.0014256596047329718)
(1.80250092522166049,0.00137843229086172471)
(1.86606598307361483,0.0013122197040106876)
(1.9318726578496911,0.00128377150874811255)
(2.0,0.00129924965151510009)
(2.07052984768275501,0.0012220634616789938)
(2.14354692507258633,0.00103410784486715285)
(2.21913894413569002,0.000908076983943652869)
(2.29739670999407001,0.000916354168939735923)
(2.37841423000544213,0.00091104915554986706)
(2.46228882668983257,0.000860795719047214423)
(2.54912125463852429,0.000847130817267929452)
(2.63901582154578852,0.000843859892626913204)
(2.73208051350879104,0.000741868842492515502)
(2.8284271247461901,0.000649298559111151545)
(2.92817139189125084,0.00065071243387313791)
(3.03143313302079616,0.000634532936291822739)
(3.13833639158700294,0.000608374037042440636)
(3.24900958542494209,0.000610809251141445552)
(3.36358566101485817,0.000527535055307340287)
(3.48220225318449656,0.000488242941038355832)
(3.60500185044332097,0.000487344021439068205)
(3.73213196614722966,0.000467621005489953748)
(3.8637453156993822,0.000464695189652695719)
(4.0,0.00040019823801116951)
(4.14105969536551002,0.00039030481958395595)
(4.28709385014517266,0.000375471479616825959)
(4.43827788827138003,0.000374595790441877306)
(4.59479341998814003,0.00033007493587871905)
(4.75682846001088427,0.000317788688414424114)
(4.92457765337966514,0.000305720933790103953)
(5.09824250927704857,0.00030214050896327537)
(5.27803164309157704,0.000264418661623649885)
(5.46416102701758208,0.000261526237517705482)
(5.6568542494923802,0.000257473956813166461)
(5.85634278378250168,0.000226841085567358583)
(6.06286626604159233,0.000223659911654949427)
(6.27667278317400589,0.000219873662079562034)
(6.49801917084988418,0.000194987129770493902)
(6.72717132202971634,0.000191376390265607355)
(6.96440450636899311,0.000184304440750464609)
(7.21000370088664195,0.000171708636408552445)
(7.46426393229445933,0.000167909596574779634)
(7.72749063139876441,0.000151390565785284962)
(8.0,0.000148518479077675637)
};
\end{axis}
\end{tikzpicture}
\caption{Values of $\kappa$ on $[1/8,8]$, with logarithmic axes.
The line joins 121 sampled values. Each sample uses eight
exactly integrated ranges and \eqref{eq:certified-kappa-tail};
its absolute numerical uncertainty is less than $7\cdot10^{-10}$.
The connecting segments are for visualization.}
\label{fig:kappa}
\end{figure}

\section{The structure of a uniform partition}\label{sec:random}

For a uniformly chosen partition counted by $R(n)$, $n\ge2$, let
$X_{a,b}$ be the multiplicity of $a/b$, and let $D_n$ count the occupied
fractions. Mass sampling chooses $a/b$ with probability
$(a/b)X_{a,b}/n$; distinct-fraction sampling chooses uniformly among
the $D_n$ occupied fractions. Put $t=b/n$ and $x=\sqrt n\,a/b$.

\begin{theorem}\label{thm:bulk}
For every bounded continuous real-valued $f$ on $[0,1]\times[0,\infty)$,
\begin{align}
\frac1n\sum_{a/b\in C_n}\frac ab X_{a,b}
 f\left(\frac bn,\sqrt n\frac ab\right)
&\xrightarrow{\mathbb P}
\frac6{\pi^2}\int_0^1\int_0^\infty
\frac{tx f(t,x)}{e^{x/\sqrt2}-1}\,dx\,dt,\label{eq:masslimit}\\
\frac1{n^{3/2}}\sum_{a/b\in C_n}\mathbf1_{\{X_{a,b}>0\}}
 f\left(\frac bn,\sqrt n\frac ab\right)
&\xrightarrow{\mathbb P}
\frac6{\pi^2}\int_0^1\int_0^\infty
t e^{-x/\sqrt2}f(t,x)\,dx\,dt.\label{eq:occupiedlimit}
\end{align}
In particular,
\begin{equation}\label{eq:distinctlimit}
\frac{D_n}{n^{3/2}}\longrightarrow\frac{3\sqrt2}{\pi^2}
\end{equation}
in probability and in $L^r$ for every fixed $1\le r<\infty$.
\end{theorem}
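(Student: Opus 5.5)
We compare the uniform measure on partitions counted by $R(n)$ with the conditioned Boltzmann measure $\widehat{\mathbb P}_\beta$ from Section~\ref{asym:section}, taking $\rho=1$ and $u=1/\sqrt2$, so that $\beta=\sqrt{n/2}$. The comparison is a large-deviation transfer. Let $\mathcal G$ be an event, such as $\{|T_f-c_f|>\varepsilon\}$ for one of the functionals in \eqref{eq:masslimit}--\eqref{eq:occupiedlimit}. Lemmas~\ref{asym:count-inequality} and~\ref{asym:necessary} give
\[
 \#\{\text{partitions of }n\text{ in }\mathcal G\}
 \le e^{F_n(\beta)+\beta n}\,\mathbb P_\beta(\mathcal G\cap\mathcal E_{K,n}).
\]
Dividing by $R(n)$ and using the lower bound \eqref{asym:lower-fixed-K} reduces the problem to proving
\[
 \mathbb P_\beta(\mathcal G\mid\mathcal E_{K,n})
 \le\exp\bigl(-\eta\,n^{3/2}/\log n\bigr)
\]
for some $\eta$ exceeding the slack $u/(K+1)+o(1)$. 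It would be safer to aim for a stretched-exponential bound of the form $\exp(-c\,n^{\delta})$ with $\delta\ge3/2$. Since $K$ is free, this works as soon as the deviation probabilities are genuinely of order $\exp(-c\,n^{3/2})$.

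First we compute the limits under the independent measure $\mathbb P_\beta$. The mass functional has mean
\[
 \frac1n\sum_{a/b}\frac ab\,\frac{f(b/n,\sqrt n\,a/b)}{e^{\beta a/b}-1}.
\]
Put $x=\sqrt n\,a/b$. For fixed $b=tn$, the coprime numerators in an $x$-interval number about $(\varphi(b)/b)\cdot b/\sqrt n\,dx$. Averaging $\varphi(b)$ over $b$ near $tn$ gives density $6/\pi^2\cdot tn$, by the M\"obius estimates \eqref{eq:primitive-power-sum}. The Riemann sum then converges to the right side of \eqref{eq:masslimit}, with weight $e^{x/\sqrt2}-1$ because $\beta a/b=x/\sqrt2$. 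The occupied count is handled the same way, using $\mathbb P_\beta(X>0)=e^{-x/\sqrt2}$. The constants check: $\int_0^1 t\,dt\cdot\int_0^\infty x/(e^{x/\sqrt2}-1)\,dx=\tfrac12\cdot2\cdot\pi^2/6$, so total mass is $1$. Taking $f=1$ in the occupied formula gives \eqref{eq:distinctlimit}. Uniform continuity of $f$ on compacts together with exponential tails in $x$ justifies the limit. Large $x$ means parts larger than $C/\sqrt n$, whose mass is controlled by the geometric tails used in Lemma~\ref{asym:global-free}.

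Next comes concentration. For bounded $f$, each summand of the occupied functional is bounded by $\|f\|_\infty$, and the summands are independent under $\mathbb P_\beta$. Hoeffding or Bernstein applied to the $O(n^2)$ indicators, of which only $O(n^{3/2})$ have non-negligible probability, gives deviation probability $\exp(-c\varepsilon^2 n^{3/2})$ at scale $\varepsilon n^{3/2}$. This beats $n^{3/2}/\log n$ comfortably. For the mass functional, apply an exponential Chernoff bound to $\sum(a/b)X_{a,b}f$ at a tilt of order $\sqrt n$. The log-moment generating function is controlled like $F_n$, which gives $\exp(-c\,n^{3/2})$. Conditioning on $\mathcal E_{K,n}$ costs only a factor $\exp(O(n^{3/2}/\log n))$ by Lemma~\ref{asym:prime-sum}, and this is absorbed by the $n^{3/2}$ rate. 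So no delicate handling of the prime blocks is needed. The $L^r$ convergence in \eqref{eq:distinctlimit} then follows from convergence in probability and the trivial bound $D_n\le d_n=O(n^2)$, after checking uniform integrability of $(D_n/n^{3/2})^r$. For that, a tail bound $\mathbb P(D_n>Cn^{3/2})\le e^{-cn^{3/2}}$ from the same transfer suffices, since it kills the $n^{r/2}$ factor.

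The main obstacle is the mass functional's Chernoff bound, because the $X_{a,b}$ are unbounded. The tilt has to be chosen as $\theta\sqrt n$ with $\theta$ small, so that $\beta-\theta\sqrt n\|f\|_\infty$ stays comparable to $\sqrt n$. Then Lemma~\ref{asym:global-free} evaluated at the shifted parameters $\beta\pm\theta\sqrt n$ gives a rate $c(\varepsilon)\,n^{3/2}$ after expanding $N^2/(2\beta)$ to second order in $\theta$. Because the weight $f$ is not constant, one needs the version of Lemma~\ref{asym:global-free} with $\beta a/b$ replaced by $\beta a/b-\theta\sqrt n f(\cdot)\,a/b$. We would prove this by the same Riemann-sum argument applied piecewise on a grid where $f$ is nearly constant.
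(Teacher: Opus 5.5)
Your proposal is correct and follows essentially the paper's route. You prove rate-$n^{3/2}$ exponential concentration of both functionals under the independent geometric model at $\beta=\sqrt{n/2}$ (the paper gets both from the scaled log-moment generating function via its M\"obius Riemann-sum lemma), and then transfer to the uniform measure because the integrality cost is only $O(n^{3/2}/\log n)$. The paper simply divides by $\mathbb P_\beta(S=n)=e^{-\kappa(1)n^{3/2}/\log n+o(n^{3/2}/\log n)}$, so your detour through $\mathcal E_{K,n}$ and the fixed-$K$ lower bound is harmless but unnecessary.

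One small correction for the occupied count: plain Hoeffding over the $\Theta(n^2)$ indicators gives only $e^{-c\varepsilon^2 n}$, which does not beat that cost. What you actually need is the variance-sensitive Bernstein bound, using variance $O(n^{3/2})$.
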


\begin{lemma}\label{lem:primitive}
If $F$ is continuous on $[0,1]\times[0,\infty)$ and
$|F(t,x)|\le Ce^{-cx}$ for some $C,c>0$, then
\begin{equation}\label{eq:primitive}
\frac1{n^{3/2}}\sum_{a/b\in C_n}
F\left(\frac bn,\sqrt n\frac ab\right)
\longrightarrow\frac6{\pi^2}\int_0^1\int_0^\infty tF(t,x)\,dx\,dt.
\end{equation}
The conclusion also holds for continuous $F$ of compact support in $x$.
\end{lemma}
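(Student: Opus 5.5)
The plan is a Riemann-sum argument: first count reduced fractions with denominator near $b$ and value near $y$ by M\"obius inversion, then integrate. It suffices to prove \eqref{eq:primitive} for compactly supported continuous $F$. Indeed, given $F$ with $|F|\le Ce^{-cx}$, take a cutoff $\chi_X(x)$ equal to one on $[0,X]$ and zero beyond $X+1$. Split $F=F\chi_X+F(1-\chi_X)$. The tail is dominated by $Ce^{-cx}\mathbf 1_{x\ge X}$, so we need a uniform bound
\[
\frac1{n^{3/2}}\sum_{a/b\in C_n}e^{-c\sqrt n\,a/b}\mathbf 1_{\{\sqrt n a/b\ge X\}}\le C'e^{-cX/2}.
\]
For this, fix $b$ and sum over all $a\ge1$ with $a/b\ge X/\sqrt n$, ignoring coprimality. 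This gives a geometric sum of size $O\bigl((b/\sqrt n)e^{-cX}/(1-e^{-c\sqrt n/b})\bigr)=O(b\,e^{-cX}/\sqrt n\cdot(1+\sqrt n/b))$. Summing over $b\le n$ gives $O(n^{3/2}e^{-cX})+O(ne^{-cX})$, which is enough. The same bound for the limit integral is immediate. Letting $n\to\infty$ and then $X\to\infty$ reduces to the compact case.

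For $F$ continuous with support in $[0,1]\times[0,X]$, uniform continuity lets us approximate $F$ uniformly, up to $\varepsilon$, by step functions on rectangles $[t_1,t_2)\times[x_1,x_2)$. The error is controlled because the total count of terms with $\sqrt n a/b\le X$ is $O(n^{3/2})$, by the same geometric bound. So the key step is the rectangle count: show that
\[
\#\{a/b\in C_n: t_1n\le b<t_2n,\ x_1\le \sqrt n\,a/b<x_2\}=\frac{6}{\pi^2}\,n^{3/2}\,\frac{t_2^2-t_1^2}{2}(x_2-x_1)+o(n^{3/2}).
\]
Fix $b$. The numerators lie in an interval of length $b(x_2-x_1)/\sqrt n$, which tends to infinity since $b\asymp n$ except when $t_1=0$; small $b\le\delta n$ contribute $O(\delta^2 n^{3/2})$ anyway. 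The count of integers in this interval coprime to $b$ is $\frac{\varphi(b)}{b}\cdot\frac{b(x_2-x_1)}{\sqrt n}+O(2^{\omega(b)})$, by M\"obius inversion over $d\mid b$. The constraint $a\le n$ is automatic once $n$ is large, since $a\le Xb/\sqrt n\le X\sqrt n$.

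Summing over $b$ gives $\frac{x_2-x_1}{\sqrt n}\sum_{t_1n\le b<t_2n}\varphi(b)+O(n^{1+\epsilon})$. Then \eqref{asym:phi-sum}, $\Phi_y=3y^2/\pi^2+O(y\log y)$, evaluates the main sum as $\frac{3}{\pi^2}(t_2^2-t_1^2)n^2$. This matches $\frac6{\pi^2}\int t\,dt\,(x_2-x_1)$. The endpoint $b\ge2$ is harmless.

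The main obstacle is bookkeeping rather than depth. The point needing care is the uniformity of the approximation: the step-function error must be bounded by $\varepsilon$ times $O(n^{3/2})$, uniformly in $n$, which comes from the crude total count above. The divisor-error term $\sum_b 2^{\omega(b)}=O(n\log n)$ is $o(n^{3/2})$.
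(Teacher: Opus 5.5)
Your proof is correct, but for compactly supported $F$ you take a different route from the paper.

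\textbf{The paper's route.} The paper applies M\"obius inversion to the coprimality condition on the pair $(a,b)$. With $s=a/\sqrt n$ and $t=b/n$, each divisor $d$ of $\gcd(a,b)$ gives an ordinary two-dimensional Riemann sum of $F(t,s/t)$ on a lattice of mesh $d/\sqrt n\times d/n$. These sums converge to $d^{-2}\int_0^1\int_0^{Rt}F(t,s/t)\,ds\,dt$. Divisors $d>K$ are discarded at cost $O(n^{3/2}/K)$, and one lets $n\to\infty$ and then $K\to\infty$. In this route the factor $t$ in the limit comes from the Jacobian of $s=tx$.

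\textbf{Your route.} You fix the denominator and count numerators coprime to $b$ in an interval, with error $O(2^{\omega(b)})$. This reduces each rectangle count to $\sum\varphi(b)$ over $[t_1n,t_2n)$, so the factor $t$ comes from the quadratic growth of $\Phi$.

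\textbf{Comparison.} Your version reuses \eqref{asym:phi-sum}, gives an explicit relative error $O(n^{-1/2}\log n)$ for each rectangle, and avoids the second limit in $K$. The price is an extra layer of step-function approximation, which is harmless given your crude $O(n^{3/2})$ count of terms with bounded $x$. The paper instead applies Riemann integrability to $F(t,s/t)$ directly. Your remark that the numerator interval length tends to infinity is unnecessary, since the $O(2^{\omega(b)})$ error is uniform in $b$ and sums to $O(n\log n)$.

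Your reduction from exponential tails to compact support is essentially the paper's. The paper drops coprimality, bounds $\sum_{a\ge1}e^{-c\sqrt n a/b}\le b/(c\sqrt n)$, and uses $e^{-cx}\le e^{-cR/2}e^{-cx/2}$ on $x>R$. There is one small slip in your version: the middle expression of your geometric-sum bound carries a spurious factor $b/\sqrt n$, which would give an $n^2$ total. The correct estimate is $e^{-cX}/(1-e^{-c\sqrt n/b})\le e^{-cX}(1+b/(c\sqrt n))$, which is the bound you actually sum over $b$.
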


\begin{proof}
For support in $x\le R$, put $s=a/\sqrt n$, $t=b/n$ and apply
M\"obius inversion. For each fixed divisor $d$, the mesh has spacings
$d/\sqrt n,d/n$, so its normalized sum tends to
$d^{-2}\int_0^1\int_0^{Rt}F(t,s/t)\,ds\,dt$.
The bounded integrand is Riemann integrable, with possible boundary
discontinuities on a set of area zero. The omitted divisors $d>K$
contribute at most
$\|F\|_\infty\sum_{d>K}\lfloor R\sqrt n/d\rfloor\lfloor n/d\rfloor
\le C_Rn^{3/2}/K$.
Let $n\to\infty$ and then $K\to\infty$, using
$\sum_d\mu_{\mathrm M}(d)/d^2=6/\pi^2$; $a\le n$ is automatic once
$n>R^2$. For exponential tails, omit coprimality and use
\[
\sum_{b=2}^n\sum_{a\ge1}e^{-c\sqrt n a/b}
\le\frac1{c\sqrt n}\sum_{b=2}^n b=O(n^{3/2}).
\]
On $x>R$, replacing $c$ by $c/2$ bounds the tail by
$O(n^{3/2}e^{-cR/2})$. Continuous cutoffs and then $R\to\infty$
prove the extension.
\end{proof}

\begin{proof}[Proof of Theorem~\ref{thm:bulk}]
Start with independent geometric multiplicities at parameter
$\beta=\sqrt{n/2}$, and set $c=1/\sqrt2$. Denote the two random
sums in \eqref{eq:masslimit} and \eqref{eq:occupiedlimit} by
$\mathcal M_n(f)$ and $\mathcal O_n(f)$. For
$|\theta|\|f\|_\infty<c/2$, independence and Lemma~\ref{lem:primitive} give
\[
\frac1{n^{3/2}}\log\mathbb E e^{\theta n^{3/2}\mathcal M_n(f)}
\longrightarrow
\frac6{\pi^2}\int_0^1\int_0^\infty
t\log\frac{1-e^{-cx}}{1-e^{-x(c-\theta f(t,x))}}\,dx\,dt
=:\Lambda_f(\theta).
\]
The logarithm extends continuously to $x=0$ with value
$\log(c/(c-\theta f(t,0)))$; it and its $\theta$-derivative
are bounded respectively by
$|\theta|\|f\|_\infty x/(e^{cx/2}-1)$ and
$\|f\|_\infty x/(e^{cx/2}-1)$.
These bounds justify the limit and differentiation under the integral.
Thus $\Lambda_f'(0)$ is the right side of \eqref{eq:masslimit}.
For each $\varepsilon>0$, choose sufficiently small positive and
negative $\theta$ in Chernoff's inequality to obtain
\begin{equation}\label{eq:bulkconc}
\mathbb P\bigl(|\mathcal M_n(f)-\Lambda_f'(0)|>\varepsilon\bigr)
\le e^{-c_{f,\varepsilon}n^{3/2}+o(n^{3/2})}.
\end{equation}
For occupied fractions, replace the logarithm by
$\log(1+e^{-cx}(e^{\theta f}-1))$. On bounded $\theta$-intervals
this function and its derivative are $O_f(e^{-cx})$, since its
argument is a convex combination of $1$ and $e^{\theta f}$, bounded
away from zero. Its derivative at zero is $e^{-cx}f$, giving the
same concentration bound about the right side of \eqref{eq:occupiedlimit}.

Writing $S=\sum(a/b)X_{a,b}$, Theorem~\ref{asym:main} and
Lemma~\ref{asym:global-free} give
\begin{equation}\label{eq:conditioncost}
\begin{split}
\log\mathbb P(S=n)
&=\log R(n)-\sqrt{n/2}\,n-F_n(\sqrt{n/2})\\
&=-\kappa(1)\frac{n^{3/2}}{\log n}
 +o\left(\frac{n^{3/2}}{\log n}\right).
\end{split}
\end{equation}
The free-energy error $O(n\log^2n)$ is smaller than the displayed
scale. Conditioning on $S=n$ gives the uniform law, since every
configuration of that total has the same product probability.
Dividing the exponential deviation bounds by $\mathbb P(S=n)$
proves both limits. Taking $f=1$ in the occupied limit gives
$3\sqrt2/\pi^2$. Finally $D_n\le n^2$: an $\varepsilon$-deviation
contributes at most $O_r(1+n^{r/2})$ times an exponentially small
probability to the $r$th moment of the normalized deviation, and
its complement contributes at most $\varepsilon^r$.
Let $n\to\infty$ and then $\varepsilon\downarrow0$ to obtain $L^r$ convergence.
\end{proof}

\begin{corollary}\label{cor:sampling}
The fraction of mass contributed by denominators at most $tn$
converges in probability to $t^2$, for $0\le t\le1$.
Under mass sampling, the limiting scaled denominator and size are
independent, with densities
\[
2t\,dt\quad(0<t<1),\qquad
\frac{3x}{\pi^2(e^{x/\sqrt2}-1)}\,dx\quad(x>0).
\]
Under distinct-fraction sampling, they are independent with densities
\[
2t\,dt\quad(0<t<1),\qquad
\frac1{\sqrt2}e^{-x/\sqrt2}\,dx\quad(x>0).
\]
\end{corollary}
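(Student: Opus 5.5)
The plan is to read everything off Theorem~\ref{thm:bulk}, taking $f$ to be a product, or a function of $t$ alone.

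\emph{Mass fraction.} Take $f(t',x)=\mathbf 1_{\{t'\le t\}}$ in \eqref{eq:masslimit}. This $f$ is not continuous, so I will sandwich it between continuous functions $g_\varepsilon\le \mathbf 1_{\{t'\le t\}}\le h_\varepsilon$ depending only on $t'$, which agree with the indicator outside $(t-\varepsilon,t+\varepsilon)$.

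The limiting functional evaluates as
\[
\frac6{\pi^2}\int_0^1 t'g(t')\,dt'\int_0^\infty\frac{x}{e^{x/\sqrt2}-1}\,dx .
\]
The $x$-integral equals $2\cdot\pi^2/6=\pi^2/3$, so the limit is $2\int_0^1 t'g(t')\,dt'$. With $g=1$ this gives total normalized mass $1$, which is consistent with $\sum(a/b)X_{a,b}=n$. For the indicator the limit is $2\int_0^t t'\,dt'=t^2$. The sandwich error is $O(\varepsilon)$, and letting $\varepsilon\downarrow0$ gives convergence in probability to $t^2$.

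\emph{Mass sampling.} Conditionally on the partition, the pair (scaled denominator, size) has law $\mu_n=\frac1n\sum (a/b)X_{a,b}\,\delta_{(b/n,\sqrt n a/b)}$. By \eqref{eq:masslimit}, $\int f\,d\mu_n\to\int f\,d\mu$ in probability for every bounded continuous $f$, where
\[
d\mu=\frac6{\pi^2}\,\frac{tx}{e^{x/\sqrt2}-1}\,dt\,dx .
\]
This factorizes as $2t\,dt\otimes \frac{3x}{\pi^2(e^{x/\sqrt2}-1)}\,dx$, and each factor is a probability density by the integral computed above.

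The annealed law is $\mathbb E\mu_n$. Since $\int f\,d\mu_n$ is bounded by $\|f\|_\infty$, bounded convergence gives $\mathbb E\int f\,d\mu_n\to\int f\,d\mu$. This yields weak convergence of the sampled pair, with the stated independent marginals. I will state the result both quenched (in probability) and annealed.

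\emph{Distinct-fraction sampling.} Here the sampling measure is $\nu_n=D_n^{-1}\sum \mathbf 1_{\{X_{a,b}>0\}}\delta_{(b/n,\sqrt n a/b)}$. I will write $\int f\,d\nu_n=\mathcal O_n(f)\big/\bigl(D_n/n^{3/2}\bigr)$. By \eqref{eq:occupiedlimit} and \eqref{eq:distinctlimit}, the numerator converges in probability to $\frac6{\pi^2}\iint te^{-x/\sqrt2}f\,dx\,dt$ and the denominator to the positive constant $3\sqrt2/\pi^2$. The ratio therefore converges in probability to
\[
\int f\,\bigl(2t\,dt\otimes \tfrac1{\sqrt2}e^{-x/\sqrt2}\,dx\bigr).
\]
Averaging is again handled by bounded convergence, since the ratio is bounded by $\|f\|_\infty$ whenever $D_n>0$, which holds for $n\ge2$.

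The only step needing care is the discontinuous indicator in the first claim. The continuous sandwich handles it, because the limit measure is absolutely continuous in $t$.
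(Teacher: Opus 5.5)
Your proposal is correct and follows essentially the same route as the paper. You specialize Theorem~\ref{thm:bulk} to test functions of product form, use the $x$-integral $\pi^2/3$ to get total mass one and the factorization, approximate the denominator indicator by continuous functions, normalize the occupied measure by \eqref{eq:distinctlimit}, and pass from the conditional (in-probability) limits to the unconditional laws by boundedness of the sampling integrals; you simply spell out the sandwich and ratio steps that the paper leaves implicit.
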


\begin{proof}
The mass limit has total mass one, since
$\int_0^\infty x/(e^{x/\sqrt2}-1)\,dx=\pi^2/3$, and its density
factors as stated. Continuous approximation of the denominator
indicator gives the first assertion. Normalize the occupied measure
by \eqref{eq:distinctlimit} for the second law. The resulting
conditional sampling integrals converge in probability and are bounded,
which also proves the unconditional distributional limits.
\end{proof}

The theorem concerns bounded continuous observables; the total number
of occurrences requires additional control near $x=0$.

\section{Large-prime blocks}\label{sec:prime}

Let $\mathbb U_{N,m}$ be uniform on the configurations counted by
$R_N(m)$. Let $\mathbb Q_{N,\beta}$ give each configuration of integer
total $m$ probability $e^{-\beta m}/H_N(e^{-\beta})$; equivalently,
it is the independent geometric law conditioned on an integer total.
For a prime in $N/(k+1)<p\le N/k$, let $V_p$ be the combined
weight of the fractions with denominators $p,2p,\ldots,kp$.

\subsection{Concentration with an exact total}

Fix $k\ge1$ and $\rho>0$. Where it is unique, define
\[
\omega_{k,\rho}(t)=\mathop{\rm argmin}_{w\in L_k^{-1}\mathbb Z_{\ge0}}
\left(\sqrt w-\pi\sqrt{\frac{\rho\Phi_k t}{3}}\right)^2.
\]
Ties occur only between consecutive lattice points, at
\begin{equation}\label{eq:shell-transitions}
t=\frac{3(\sqrt r+\sqrt{r+1})^2}{4\pi^2\rho\Phi_kL_k},
\qquad r\in\mathbb Z_{\ge0}.
\end{equation}

\begin{theorem}\label{thm:prime-proportion}
Fix $k\ge1$, $\rho>0$, and a closed interval
$I\subset(1/(k+1),1/k]$ containing no point of
\eqref{eq:shell-transitions}. For every $\varepsilon>0$ there is
$c=c(k,\rho,I,\varepsilon)>0$ such that, for all sufficiently large $N$,
\[
\mathbb U_{N,\lfloor\rho N\rfloor}\left(
\#\{p:p/N\in I,\ V_p\ne\omega_{k,\rho}(p/N)\}
\ge\varepsilon\frac N{\log N}\right)
\le\exp\left(-c\frac{N^{3/2}}{\log N}\right).
\]
Here $p$ in a set or sum denotes a prime.
\end{theorem}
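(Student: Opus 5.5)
We bound the uniform probability by the geometric model with exact total, following the upper-bound half of the proof of Theorem~\ref{asym:main}. Put $m=\lfloor\rho N\rfloor$, $u=(2\rho)^{-1/2}$ and $\beta=u\sqrt N$. Let $\mathcal F$ be the event that at least $\varepsilon N/\log N$ primes with $p/N\in I$ have $V_p\ne\omega_{k,\rho}(p/N)$.

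Lemma~\ref{asym:count-inequality} gives
\[
\#\{x\in\mathcal F:S=m\}\le e^{F_N(\beta)+\beta m}\,\mathbb P_\beta(\mathcal F\cap\mathcal E_{K,N}),
\]
for any fixed $K\ge k$, by Lemma~\ref{asym:necessary}. We divide by the lower bound on $R_N(m)$ from Theorem~\ref{asym:main}. Since $F_N(\beta)+\beta m=\sqrt{2\rho}N^{3/2}+O(N\log^2N)$, it then suffices to prove
\[
-\log\mathbb P_\beta(\mathcal F\cap\mathcal E_{K,N})\ge\bigl(J(u)+2c\bigr)\frac{N^{3/2}}{\log N}
\]
for some $K$ and all large $N$. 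Here $\kappa(\rho)=J(u)$, since the factor $u$ in \eqref{asym:G} matches the prefactor $1/\sqrt{2\rho}$ in \eqref{asym:kappa}.

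The selected blocks are independent, so the key local estimate is a sharpened form of Lemma~\ref{asym:block-probability}. For $p/N\in I$,
\[
-\log\mathbb P_\beta\bigl(E_p\cap\{V_p\ne\omega\}\bigr)\ge\sqrt N\bigl(G_k(u,p/N)+\eta\bigr)-O(N^{3/8}+\log N),
\]
where $\eta>0$ depends on $k,\rho,I$. To see this, note that the proof of Lemma~\ref{asym:block-probability} writes $\mathbb P_\beta(E_p)$ as a lattice sum over $w\le M$ with terms $\exp\{-\sqrt N\,u(\sqrt w-\sqrt y)^2+O(N^{3/8})\}$, up to a relative tail error $e^{-\sqrt N}$, where $y=\pi^2\Phi_kt/(6u^2)$. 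Away from the transition points \eqref{eq:shell-transitions}, the minimizer is unique. Continuity and compactness of $I$ then give a uniform gap $\eta$ between the minimum and the value at any other lattice point $w\le M$. The lower-order terms are negligible.

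Next we control the count. Index the primes with $p/N\in I$ and condition on which subset $T$ of them is bad; there are at most $2^{\pi_{\mathrm{pr}}(N)}=e^{O(N/\log N)}$ subsets. By independence, for each fixed $T$ with $|T|\ge\varepsilon N/\log N$, the probability is at most
\[
\prod_{p\text{ selected}}\mathbb P_\beta(E_p)\times\prod_{p\in T}e^{-\eta\sqrt N+O(N^{3/8})}.
\]
Here we use $\mathbb P_\beta(E_p)\le\exp\{-\sqrt NG_k(u,p/N)+O(N^{3/8}+\log N)\}$ from \eqref{asym:block-rate}. Summing over $T$ gives
\[
\log\mathbb P_\beta(\mathcal F\cap\mathcal E_{K,N})\le-\frac{N^{3/2}}{\log N}\bigl(J_K(u)+\varepsilon\eta\bigr)+o_K\Bigl(\frac{N^{3/2}}{\log N}\Bigr),
\]
using Lemma~\ref{asym:prime-sum}. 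We then choose $K$ so large that $J(u)-J_K(u)<\varepsilon\eta/4$, by Lemma~\ref{asym:kappa-convergence}. Comparing with the $-J(u)N^{3/2}/\log N$ lower bound yields the statement with $c=\varepsilon\eta/4$.

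The main obstacle is the uniform gap $\eta$. The minimum in \eqref{asym:G} is over an infinite lattice, which requires the truncation $w\le M$ from Lemma~\ref{asym:block-probability}. The estimate of Lemma~\ref{asym:block-coefficient} must also hold uniformly over all nonminimizing lattice points. The gap itself follows because $t\mapsto(\sqrt w-\sqrt{y(t)})^2$ for the finitely many $w\le M$ are continuous functions that differ from the minimum except at tie points, and $I$ is compact and avoids those ties.
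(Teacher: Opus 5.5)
Your proposal is correct and follows the paper's proof essentially step for step. You obtain a uniform per-block gap $\eta$ by compactness away from the transition points, take a union bound over at most $2^{\pi_{\mathrm{pr}}(N)}$ exceptional sets using independence of the selected blocks and Lemma~\ref{asym:prime-sum}, and then choose $K$ so that $J(u)-J_K(u)$ is small relative to $\varepsilon\eta$. The only cosmetic difference is the normalization: you divide by the lower bound for $R_N(m)$ from Theorem~\ref{asym:main} together with \eqref{asym:global-F}, whereas the paper divides by $\mathbb P_\beta(S=m)$ via Corollary~\ref{asym:integrality}, which is the same estimate.
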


\begin{proof}
Put $u=(2\rho)^{-1/2}$, $\beta=u\sqrt N$ and
$T_N=N^{3/2}/\log N$. Consecutive differences in the lattice index
of the expression defining $G_k(u,t)$ increase strictly, giving
\eqref{eq:shell-transitions}. Choose a fixed $M$ containing the
minimizers on $I$, with the tail in \eqref{asym:conditional-tail}
at most $e^{-\sqrt N}$. Compactness gives a positive uniform gap
to all competing lattice points below $M$.
Lemma~\ref{asym:block-coefficient}, with the exact coefficient at zero,
bounds each competing-to-minimizing weighted coefficient ratio by
$\exp(-d\sqrt N+O_{k,\rho,I}(N^{3/8}+\log N))$ for some $d>0$.
Summing the finite range and the conditional tail gives
\begin{equation}\label{eq:shell-gap}
\mathbb P_\beta\bigl(V_p\ne\omega_{k,\rho}(p/N)\mid E_p\bigr)
\le e^{-\eta\sqrt N}
\end{equation}
uniformly on $I$, for some $\eta>0$ and all large $N$.

Set $r_N=\lceil\varepsilon N/\log N\rceil$ and fix $K\ge k$.
For any $r_N$ relevant primes, independence of selected blocks
bounds the probability of $\mathcal E_{K,N}$ and exceptions at
all these primes by
$\mathbb P_\beta(\mathcal E_{K,N})e^{-\eta\sqrt N r_N}$.
There are at most $2^{\pi_{\mathrm{pr}}(N)}=e^{o(T_N)}$ such subsets;
if fewer than $r_N$ primes are available the event is empty.
The union bound and Lemma~\ref{asym:prime-sum} therefore give logarithm
at most $-J_K(u)T_N-\eta\varepsilon T_N+o_K(T_N)$.
The exact-total event implies $\mathcal E_{K,N}$ and has probability
$e^{-J(u)T_N+o(T_N)}$ by Corollary~\ref{asym:integrality}.
Dividing by this probability yields logarithm at most
$(J(u)-J_K(u)-\eta\varepsilon)T_N+o_K(T_N)$.
Choose $K$ so that $J(u)-J_K(u)<\eta\varepsilon/2$, then let $N$ increase.
\end{proof}

\begin{corollary}\label{cor:diagonal-proportion}
Put $t_*=3(\sqrt2+\sqrt3)^2/(4\pi^2)\approx0.7522$.
Under $\mathbb U_{N,N}$, the number of primes $N/2<p<t_*N$
with $V_p\ne2$ is $o_{\mathbb P}(N/\log N)$, and the number
of primes $t_*N<p\le N$ with $V_p\ne3$ is
$o_{\mathbb P}(N/\log N)$. The exceptional proportion in each
of these two ranges consequently tends to zero in probability.
\end{corollary}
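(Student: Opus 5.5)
This corollary is the case $k=1$, $\rho=1$ of Theorem~\ref{thm:prime-proportion}, combined with a short compactness argument near the single transition point. For $k=1$ we have $L_1=1$ and $\Phi_1=1$, so the lattice is $\mathbb Z_{\ge0}$. The target point is $\pi\sqrt{t/3}$, and $\omega_{1,1}(t)$ is the nonnegative integer $w$ minimizing $(\sqrt w-\pi\sqrt{t/3})^2$. By \eqref{eq:shell-transitions} the ties occur at
\[
t_r=\frac{3(\sqrt r+\sqrt{r+1})^2}{4\pi^2},\qquad r\ge0.
\]
Numerically $t_0=3/(4\pi^2)\approx0.076$, $t_1\approx0.44$, $t_2=t_*\approx0.7522$ and $t_3\approx1.06>1$. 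So on $(1/2,1]$ the only transition is $t_*$. The minimizer is $2$ on $(1/2,t_*)$ and $3$ on $(t_*,1]$. For this, check that $\pi\sqrt{t/3}$ lies between $\sqrt{1.5}$ and $\sqrt{2.5}$ up to the midpoint comparison; equivalently, the tie formula with $r=1,2,3$ places $t_1<1/2<t_*<1<t_3$.

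Fix $\delta>0$ and consider the closed intervals $I_1=[1/2+\delta,\,t_*-\delta]$ and $I_2=[t_*+\delta,\,1]$. Both lie in $(1/2,1]$ and contain no transition point. Apply Theorem~\ref{thm:prime-proportion} with $k=1$, $\rho=1$ and any fixed $\varepsilon>0$. It shows that, with probability at least $1-2\exp(-cN^{3/2}/\log N)\to1$, fewer than $\varepsilon N/\log N$ primes with $p/N\in I_1$ have $V_p\ne2$, and likewise fewer than $\varepsilon N/\log N$ primes with $p/N\in I_2$ have $V_p\ne3$. Note that $\mathbb U_{N,N}$ is $\mathbb U_{N,\lfloor\rho N\rfloor}$ at $\rho=1$.

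The primes excluded from $I_1$ and $I_2$ lie in $(N/2,N/2+\delta N)$ or in $(t_*N-\delta N,\,t_*N+\delta N)$. By the prime number theorem there are at most $3\delta N/\log N\,(1+o(1))$ of them, deterministically. So the exceptional count in each range is at most $(\varepsilon+3\delta+o(1))N/\log N$ with probability tending to one. Since $\varepsilon$ and $\delta$ are arbitrary, this gives $o_{\mathbb P}(N/\log N)$.

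The statement about proportions follows by dividing by the number of primes in each range. The range $(N/2,t_*N)$ contains $\sim(t_*-1/2)N/\log N$ primes and $(t_*N,N]$ contains $\sim(1-t_*)N/\log N$, both of exact order $N/\log N$.

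The only real care needed is in pinning down which lattice points are optimal on $(1/2,1]$, namely checking $t_1<1/2$ and $t_3>1$ and identifying the minimizer on each side of $t_*$. Everything else is immediate from the theorem.
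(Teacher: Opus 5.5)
Your proof is correct and follows the paper's own argument. You check via the transition formula that $t_1<1/2<t_*<1<t_3$, so the minimizers are $2$ and $3$; you apply Theorem~\ref{thm:prime-proportion} with $k=\rho=1$ on the closed intervals left after deleting $\delta$-neighbourhoods of $1/2$ and $t_*$; and you absorb the $O(\delta N/\log N)$ deleted primes using the prime number theorem. One small slip: the aside about $\sqrt{1.5}$ and $\sqrt{2.5}$ is not the right criterion, since the tie between $r$ and $r+1$ occurs at $\pi\sqrt{t/3}=(\sqrt r+\sqrt{r+1})/2$; the tie-formula check you give immediately afterwards is the correct one.
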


\begin{proof}
For $k=\rho=1$, the adjacent transitions are
$3(1+\sqrt2)^2/(4\pi^2)<1/2$ and
$3(\sqrt3+2)^2/(4\pi^2)>1$, so the minimizers are $2$ and $3$.
Delete neighborhoods of width $\delta$ at $1/2$ and $t_*$ and apply
Theorem~\ref{thm:prime-proportion} on the remaining closed intervals.
The prime number theorem gives $O(\delta N/\log N)+o(N/\log N)$
deleted primes and makes both full ranges of order $N/\log N$.
Choose $\delta$ small for a prescribed tolerance, then let $N$ increase.
\end{proof}

The values $2$ and $3$ are specific to $\rho=1$; the exact-total
result permits a vanishing proportion of exceptional blocks.

\subsection{Simultaneous concentration with an integer total}

\begin{lemma}\label{lem:topfactor}
Under $\mathbb Q_{N,\beta}$, the top-prime blocks $N/2<p\le N$
are independent and integer-valued, with
\[
\mathbb Q_{N,\beta}(V_p=r)=
\frac{a_{N,p}(r)e^{-\beta r}}{\sum_{j\ge0}a_{N,p}(j)e^{-\beta j}},
\]
where $a_{N,p}(r)$ counts block configurations of total $r\in\mathbb Z_{\ge0}$.
If $p_0$ is the ordinary partition function, then for every fixed integer $r\ge1$,
uniformly for these primes,
\begin{equation}\label{eq:primecoef}
a_{N,p}(r)=p_0(pr)\bigl(1+O_r(Ne^{-c_r\sqrt N})\bigr),
\qquad c_r=\frac\pi{\sqrt3}(\sqrt r-\sqrt{r-1}).
\end{equation}
\end{lemma}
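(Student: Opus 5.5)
The plan has two parts: a structural argument for independence and the conditional law, and a direct comparison with ordinary partitions for \eqref{eq:primecoef}.

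\emph{Structure.} For a prime $N/2<p\le N$ we have $2p>N$, so the only denominator divisible by $p$ is $p$ itself. The block is therefore the set of fractions $a/p$ with $1\le a\le N$ and $a\ne p$, and $V_p\in p^{-1}\mathbb Z$. Since $\mathbb Q_{N,\beta}$ is the product geometric law $\mathbb P_\beta$ conditioned on $\{S\in\mathbb Z\}$, I will decompose this event into primary coordinates, as in the proofs of Lemma~\ref{asym:necessary} and Lemma~\ref{asym:global-injection}: $S\in\mathbb Z$ exactly when $q_\ell\mid L_NS$ for every prime $\ell\le N$.
\begin{itemize}
\item For a top prime $p$, the congruence in Lemma~\ref{asym:necessary} with $k=1$ and $L_1=1$ shows that the $p$-coordinate vanishes if and only if $V_p\in\mathbb Z$.
\item For $\ell\ne p$, every part of a top block has denominator $p$, which is coprime to $\ell$. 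Such parts contribute $0$ modulo $q_\ell$. Hence the $\ell$-coordinate is a function of the parts outside all top blocks only.
\end{itemize}
Consequently $\{S\in\mathbb Z\}$ is the intersection of the independent events $\{V_p\in\mathbb Z\}$, one for each top prime, with one event measurable in the remaining coordinates. Conditioning a product measure on such an intersection keeps the factors independent, and each top block is simply conditioned on $V_p\in\mathbb Z$. Writing $\mathbb P_\beta(V_p=r)=a_{N,p}(r)e^{-\beta r}/Z_p$ and normalising gives the displayed law.

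\emph{Coefficient comparison.} A block configuration of total $r$ is a multiset of numerators $a\in\{1,\dots,N\}\setminus\{p\}$ with $\sum a=pr$; the condition $p\nmid a$ only removes $a=p$, because $a\le N<2p$. Thus $a_{N,p}(r)$ counts partitions of $n=pr$ into parts at most $N$, none equal to $p$. Then $0\le p_0(n)-a_{N,p}(r)$ is at most the sum of two counts:
\begin{itemize}
\item partitions containing the part $p$, at most $p_0(n-p)=p_0(p(r-1))$;
\item partitions containing a part $j>N$, at most $\sum_{N<j\le n}p_0(n-j)\le n\,p_0(n-N)$.
\end{itemize}
For $r=1$ the second set is empty since $n=p\le N$, and the first has one element. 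Hardy--Ramanujan gives $1/p_0(p)=O(p\,e^{-\pi\sqrt{2p/3}})$, and $p>N/2$ turns this into $O(Ne^{-c_1\sqrt N})$.

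For $r\ge2$, both counts are at most $(n+1)p_0(p(r-1))$, since $n-N<p(r-1)$ and $p_0$ is increasing. By Hardy--Ramanujan, $p_0(p(r-1))/p_0(pr)=O_r(1)\exp\{-\pi\sqrt{2p/3}(\sqrt r-\sqrt{r-1})\}$, where the polynomial prefactors have bounded ratio for fixed $r\ge2$. Using $\sqrt{2p/3}\ge\sqrt{N/3}$ and $n+1=O_r(N)$ yields the relative error $O_r(Ne^{-c_r\sqrt N})$ with the stated $c_r$.

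The main obstacle is keeping the polynomial factor at exactly $N$. This requires handling $r=1$ separately, where $p_0(0)=1$ and the Hardy--Ramanujan prefactor ratio is not bounded. It also requires using monotonicity of $p_0$ to absorb the tail sum into a single factor $n$ rather than $n^2$. If needed, I would replace the Hardy--Ramanujan asymptotic by the crude uniform bounds $p_0(m)\asymp m^{-1}e^{\pi\sqrt{2m/3}}$, valid for $m\ge1$.
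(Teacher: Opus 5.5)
Your proposal is correct and follows the paper's proof essentially step for step. You factor $\{S\in\mathbb Z\}$ into the top-block events $\{V_p\in\mathbb Z\}$ and a remainder event; you phrase this via primary coordinates, the paper via ``the remainder must then also have integer total.'' You then compare with partitions of $pr$ into parts at most $N$ other than $p$, bound the loss by $O(pr)\,p_0(p(r-1))$ using monotonicity, apply Hardy--Ramanujan for $r\ge2$, and use $a_{N,p}(1)=p_0(p)-1$ for $r=1$. The only blemish is wording: your second bullet should be restricted to primes $\ell\le N/2$, since for another top prime $\ell$ the $\ell$-coordinate is determined by the block of $\ell$, which your first bullet already covers.
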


\begin{proof}
An integer total forces each top-prime block to have integer total,
and the remainder must then also have integer total. These conditions
and the product weights factor, proving independence.
Multiplication by $p$ identifies the block with partitions of $pr$
into parts at most $N$, excluding $p$. Every excluded part is at
least $p$, so at most $pr\,p_0(p(r-1))$ ordinary partitions are lost.
The Hardy--Ramanujan formula \cite{hardyramanujan} gives
\eqref{eq:primecoef} for fixed $r\ge2$, since $p>N/2$.
For $r=1$, use $a_{N,p}(1)=p_0(p)-1$.
\end{proof}

\begin{theorem}\label{thm:primefreeze}
Suppose $\beta_N/\sqrt N\to1/\sqrt2$. For every closed interval
$I\subset(1/2,t_*)$, there is $c_I>0$ such that
\[
\mathbb Q_{N,\beta_N}
\bigl(\text{some prime }p\text{ with }p/N\in I\text{ has }V_p\ne2\bigr)
\le e^{-c_I\sqrt N}
\]
for all sufficiently large $N$. For every closed interval
$I\subset(t_*,1]$, the same conclusion holds with $3$ in place of $2$.
\end{theorem}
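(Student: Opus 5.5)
Under $\mathbb Q_{N,\beta_N}$, Lemma~\ref{lem:topfactor} makes the top-prime blocks independent with explicit marginals. I will therefore bound, for each prime with $p/N\in I$, the probability $\mathbb Q_{N,\beta_N}(V_p\ne r_*)$ by $e^{-2c_I\sqrt N}$, where $r_*=2$ or $3$ is the target value. A union bound over at most $N$ primes then costs only a factor $N=e^{o(\sqrt N)}$. Independence is not even needed for the union bound; it matters only because it identifies the marginal law of $V_p$ as the displayed ratio.

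For a fixed prime the marginal is $a_{N,p}(r)e^{-\beta_N r}/\sum_j a_{N,p}(j)e^{-\beta_N j}$. I will compare the terms $r\ne r_*$ with the term $r_*$.

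For bounded $r$ (say $1\le r\le M$) I use \eqref{eq:primecoef} together with the Hardy--Ramanujan asymptotic $\log p_0(n)=\pi\sqrt{2n/3}+O(\log n)$. This gives
\[
\log\bigl(a_{N,p}(r)e^{-\beta_N r}\bigr)
=\sqrt N\Bigl(2\sqrt{\Lambda_1 tr}-ur\Bigr)+o(\sqrt N),
\]
where $t=p/N$, $u=\beta_N/\sqrt N\to1/\sqrt2$ and $\Lambda_1=\pi^2/6$, so that $2\sqrt{\Lambda_1tr}=\pi\sqrt{2tr/3}$. Up to the $r$-independent term $\pi^2t/(6u)$, this is $-\sqrt N$ times the bracket defining $G_1(u,t)$ in \eqref{asym:G}. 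At $u=1/\sqrt2$, the minimizer over $r\in\mathbb Z_{\ge0}$ is $2$ for $t\in(1/2,t_*)$ and $3$ for $t\in(t_*,1]$; these are exactly the transitions \eqref{eq:shell-transitions} with $k=\rho=1$, as checked in Corollary~\ref{cor:diagonal-proportion}. Since $I$ is a compact interval avoiding the transition points, continuity in $(u,t)$ gives a uniform gap $d>0$ between the value at $r_*$ and every competing $r\le M$, including $r=0$, where $a_{N,p}(0)=1$. This gap persists for $u$ near $1/\sqrt2$, which covers all large $N$. Each competing term is therefore at most $e^{-d\sqrt N/2}$ times the $r_*$ term.

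For large $r$ I reuse the Chernoff tail \eqref{asym:conditional-tail}. The bound $F_{N,p}(\beta/2)=O(\sqrt N)$ from Lemma~\ref{asym:block-free} (here $k=1$) gives
\[
\sum_{r>M}a_{N,p}(r)e^{-\beta r}\le e^{F_{N,p}(\beta/2)-\beta M/2}\le e^{-\sqrt N}
\]
once $M$ is a large fixed constant. Meanwhile the $r_*$ term is at least the $r=0$ term $1$ by the minimizing property. Summing, $\mathbb Q(V_p\ne r_*)\le Me^{-d\sqrt N/2}+e^{-\sqrt N}$, uniformly over $p/N\in I$.

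The main obstacle is uniformity of the error terms. The factor $1+O_r(Ne^{-c_r\sqrt N})$ in \eqref{eq:primecoef} is harmless for fixed $r\le M$. The Hardy--Ramanujan error must be $o(\sqrt N)$ uniformly in $p\in(N/2,N]$, which is immediate. The point needing care is that $\beta_N/\sqrt N$ only converges to $1/\sqrt2$, so the gap must be established on a compact $(u,t)$-neighborhood rather than at a single $u$. That holds by continuity of finitely many functions $2\sqrt{\Lambda_1tr}-ur$ with a strict ordering at $u=1/\sqrt2$ on $I$. Alternatively, Lemma~\ref{asym:block-coefficient} with $k=1$ already supplies the coefficient asymptotics with error $O(N^{3/8})$, which avoids \eqref{eq:primecoef} altogether.
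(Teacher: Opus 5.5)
Your proposal is correct and is essentially the paper's argument: a per-block comparison of the bounded-$r$ terms using \eqref{eq:primecoef} and the Hardy--Ramanujan asymptotic, with the uniform gap coming from the transitions \eqref{eq:shell-transitions} at $k=\rho=1$; a tail bound using that the denominator is at least one; and a union bound over at most $N$ primes. The only difference is the large-$r$ tail. There you use the block Chernoff bound \eqref{asym:conditional-tail} with Lemma~\ref{asym:block-free}, whereas the paper uses the elementary estimate $a_{N,p}(r)\le p_0(pr)\le e^{\pi\sqrt{2pr/3}}$; either works.
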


\begin{proof}
Lemma~\ref{lem:topfactor} and the Hardy--Ramanujan formula give, for
each fixed $r\ge1$, uniformly in $t=p/N\in[1/2,1]$,
\[
N^{-1/2}\log\bigl(a_{N,p}(r)e^{-\beta_Nr}\bigr)
=\pi\sqrt{2tr/3}-r/\sqrt2+o(1).
\]
For $r=0$ the logarithm is zero. Consecutive differences decrease
strictly with $r$, and their zeros are the transitions
\eqref{eq:shell-transitions} for $k=\rho=1$. Thus the indicated
maximizer has a positive uniform gap on $I$ in every fixed finite range.
For the tail, coefficient extraction from the ordinary partition
product \cite{andrews} and integral comparison give
$p_0(k)\le\exp(\lambda k+\pi^2/(6\lambda))$ for $\lambda>0$;
optimization yields $p_0(k)\le e^{\pi\sqrt{2k/3}}$.
Since $a_{N,p}(r)\le p_0(pr)$, a sufficiently large fixed $B$ gives
$\sum_{r\ge B}a_{N,p}(r)e^{-\beta_Nr}
\le\sum_{r\ge B}e^{-cr\sqrt N}$ uniformly in $p$.
The denominator is at least one, so this tail and the finite gap
give an $e^{-c'_I\sqrt N}$ bound per block. A union bound over at
most $N$ primes proves the assertion.
\end{proof}

\section{Further questions}\label{sec:questions}

The next coefficient requires estimates uniform in a growing
prime-range cutoff $K$ and a sharper completion argument.
Proposition~\ref{asym:finite-prime} retains the repair loss
$O(N^{3/2}/((K+1)\log N))$ and a saddle-displacement error of
order $N^{3/2}/\log^2N$. Define
\[
J_{\log}(u)=\sum_{k\ge1}\int_{1/(k+1)}^{1/k}G_k(u,t)\log t\,dt.
\]
The series converges absolutely and locally uniformly by
\eqref{asym:kappa-tail}, with shell bound
$C_U\log(k+1)/(k^3L_k^2)$.
Expanding the prime density formally gives the normalized exponent
$\rho u+1/(2u)-J(u)/\log N+J_{\log}(u)/\log^2N$.
By Lemma~\ref{asym:kappa-regularity}, $J$ is continuously differentiable.
At $u_\rho=(2\rho)^{-1/2}$ the leading second derivative is $u_\rho^{-3}$;
the formal saddle shift $u_\rho^3J'(u_\rho)/\log N$ gives the candidate
\begin{equation}\label{eq:formal-second}
J_{\log}(u_\rho)-\frac{u_\rho^3}{2}J'(u_\rho)^2
\end{equation}
for the coefficient of $N^{3/2}/\log^2N$ in
$\log R_N(\lfloor\rho N\rfloor)$. This is an open asymptotic
candidate: growing prime ranges, the exact-total coefficient and
other arithmetic contributions at this order still require control.

For uniform partitions of total $N$, simultaneous concentration
of every top-prime block away from $t_*$ remains open here.
Theorem~\ref{thm:primefreeze} would transfer to this measure if
some $\beta_N\sim\sqrt{N/2}$ satisfied
$-\log\mathbb Q_{N,\beta_N}(S=N)=o(\sqrt N)$, by division of its
exceptional bound by that central probability.
Corollary~\ref{asym:integrality} controls only the larger scale
$N^{3/2}/\log N$; obtaining the required local estimate calls for
centering the integer-total measure accurately enough.

A local Gaussian formula for the residue numerator likewise
requires control of the nonzero arithmetic characters in
\eqref{eq:residue-Fourier-mgf}, beyond its fixed moments.
Congruences for $R(n)$ also remain distinct from fixed-alphabet
periodicity, since the alphabet changes with $n$.

\end{document}